\documentclass{article}
\usepackage{amssymb, amsmath, latexsym, amsthm}
\usepackage[activeacute, activegrave, english]{babel}
\usepackage{hyperref}
\usepackage{color}
\hypersetup{urlcolor=blue} 
\providecommand{\keywords}[1]{\textbf{Keywords:} #1}
\usepackage[sort&compress,square,comma,authoryear]{natbib}
\bibliographystyle{plainnat}

\usepackage[toc]{appendix}

\usepackage{longtable}

\usepackage{authblk}


\usepackage{tikz}
\usetikzlibrary{positioning,arrows,calc}
\tikzset{
modal/.style={>=stealth',shorten >=1pt,shorten <=1pt,auto,node distance=1.5cm,
semithick},
real world/.style={double,circle,draw,thick,align=center},
world/.style={circle,draw,minimum size=0.5cm,fill=gray!15},
world1/.style={circle,draw,minimum size=0.5cm,fill=white!15}
point/.style={circle,draw,inner sep=0.5mm,fill=black},
reflexive above/.style={->,loop,looseness=7,in=120,out=60},
reflexive below/.style={->,loop,looseness=7,in=240,out=300},
reflexive left/.style={->,loop,looseness=7,in=150,out=210},
reflexive right/.style={->,loop,looseness=7,in=30,out=330}
}

\usepackage{subcaption}

\newtheorem{defi}{Definition}
\newtheorem{prop}{Proposition}
\newtheorem{theor}{Theorem}

\newtheorem{lemma}{Lemma}

\newtheorem{example}{Example}

\date{}


\author{
  Ekaterina Kubyshkina\footnote{Department of Philosophy, University of Milan, \texttt{ekaterina.kubyshkina@unimi.it}}
    \and
  Marcio Kl\'{e}os Pereira\footnote{Department of Philosophy, Federal University of Maranh\~{a}o, \texttt{marcio.kleos@ufma.br}}
  \and
  Mattia Petrolo\footnote{LASIGE Computer Science and Engineering Research Centre, University of Lisbon, \texttt{mpetrolo@fc.ul.pt}}
}

\title{Ignorance as an excuse, formally}

\begin{document}

\maketitle

\begin{abstract}

There is a lively debate in the current literature on epistemology on which type of ignorance may provide a moral excuse. A good candidate is the one in which an agent has never thought about or considered as true a proposition $p$. From a logical perspective, it is usual to model situations involving ignorance by means of epistemic logic. However, no formal analysis has been provided for ignorance as an excuse. We fill this gap by proposing an original logical setting for modelling this type of ignorance. In particular, we introduce a complete and sound logic in which excusable ignorance is expressed as a primitive modality. This logic is characterized by Kripke semantics with possibly incomplete worlds. Moreover, to consider the conditions of a possible change of an agent's ignorance, we will extend the setting to public announcement logic equipped with a novel update procedure.

\end{abstract}

\keywords{Ignorance representation, Many-valued modal logic, Epistemic logic, Public announcement logic}

\section{Introduction}
\label{intro}

Since Aristotle's \textit{Nichomachean Ethics}, the question of whether ignorance provides a moral excuse for the actions of an agent has attracted moral philosophers and has given rise to many lively discussions.\footnote{Some of these views are considered in \citet{Smith1983}.} Recent debates in ethics and social epistemology focus on the exact type of ignorance that might provide an excuse. The question can be approached from different perspectives, depending on the specific objective: which situations count as genuine cases of ignorance, which kinds of ignorance are blameworthy, whether ignorance can provide an agent with an excuse, and whether we are considering blame and responsibility from a moral or a legal standpoint. In what follows, we will adopt a specific position concerning moral (rather than legal) responsibility and ignorance as an excuse, drawing on the account developed by \citet{Peels2014, Peels2017} . In particular, Peels considers four types of ignorance that are susceptible of providing a moral excuse:

\begin{itemize}

\item \textit{Disbelieving Ignorance:} $S$ is disbelievingly ignorant that $p$ iff (i) it is true that $p$, and (ii) $S$ disbelieves that $p$.\footnote{Following \citet[p. 484]{Peels2014}, disbelieving that $p$ is understood as believing that the negation of $p$ is true while $p$ is true.}

\item \textit{Suspending Ignorance:} $S$ is suspendingly ignorant that $p$ iff (i) it is true that $p$, and (ii) $S$ suspends belief in $p$.

\item \textit{Deep Ignorance:} $S$ is deeply ignorant that $p$ iff (i) it is true that $p$, and (ii) $S$ neither believes that $p$, nor disbelieves that $p$, nor suspends belief in $p$.

\item \textit{Warrantless Ignorance:} $S$ is warrantlessly ignorant that $p$ iff (i) it is true that $p$, (ii) $S$ believes that $p$, and (iii) $S$ does \textit{not} know that $p$.

\end{itemize}

In the formal setting introduced later, we focus on a common feature of situations involving disbelieving and deep ignorance: the agent's ignorance of a proposition $p$ is based on the fact that they do not consider $p$ as possibly true in any possible scenario. In the case of disbelieving ignorance, the proposition is considered as false for all that the agent can think about. In the case of deep ignorance, the agent does not consider the proposition at all: it is neither true, nor false to them.

\citet{Peels2014} argues that only disbelieving and deep ignorance can count as types of ignorance that fully excuse the agent's actions, where a \textit{full excuse} is defined as follows:

``Some person $S$'s ignorance that $p$ fully excuses her for the actualization of some state of affairs $\Sigma$ iff (i) $S$ fails to meet an all-things-considered obligation to prevent the actualization of $\Sigma$ or to (not) do something which would have prevented the actualization of $\Sigma$, and (ii) due to $S$'s ignorance that $p$, $S$ is blameless for the actualization of $\Sigma$.'' (\citet[p. 482-484]{Peels2014}) 

However, not all excuses qualify as full excuses. Some types of ignorance, in particular, suspending ignorance, leave an agent at least partly blameworthy, and thus allow only for partial excuses rather than full ones. In our framework, we set these cases aside and restrict our analysis to types of ignorance that provide a full moral excuse.

\citet{Peels2014} identifies four kinds of propositions that are relevant when it comes to ignorance as an excuse. They are:

\begin{enumerate}

\item \textit{Ignorance of One's Obligation.} $S$ is ignorant that she has an all-things-considered obligation $O$ (not) to actualize $\Sigma$ or (not) to do something which would have prevented the actualization of $\Sigma$.  \citep[p. 485]{Peels2014}

\item \textit{Ignorance of One's Ability to Meet One's Obligation.} $S$ is ignorant that she is able to meet her all-things-considered obligation $O$ (not) to actualise $\Sigma$ or (not) to do something which would have prevented the actualisation of $\Sigma$. \citep[p. 487]{Peels2014} 

\item \textit{Ignorance of How to Meet One's Obligation}. $S$ is ignorant that $X_{1}$, $X_{2}$, ..., or $X_{n}$ is a sufficiently good means that is available to her to meet her all-things considered obligation $O$ (not) to actualize $\Sigma$ or (not) to do something which would have prevented the actualization of $\Sigma$. \citep[p. 488]{Peels2014}

\item \textit{Lack of foresight.} One can know or truly believe that one has an obligation to $\phi$, that one is able to $\phi$, and even how to $\phi$ and yet be ignorant that $\neg \phi$-ing will result in the actualization of $\Sigma$. \citep[p. 489]{Peels2014}

\end{enumerate}
 
Following Peels' account, it is the disbelieving ignorance and deep ignorance of propositions 1 -- 4 that provide a full moral excuse.



To illustrate a situation of fully excusable ignorance, let us borrow an example from \citet[p. 482]{Peels2014}:

\begin{quotation}

If I give my daughter a piece of chocolate that, unbeknownst to me, was poisoned by a maniac who happened to choose my house for his malicious action, and I have no indication whatsoever to think that the chocolate is poisoned, then, it seems, I am not blameworthy at all for giving her that piece of chocolate.

\end{quotation}

In this example, the agent who gives to his daughter a poisoned piece of chocolate can be fully excused for this action because they do not consider the proposition `The chocolate bar might be poisoned' as true in any possible scenario. Compare this situation with a modified version of the example, given by \citet[p. 483]{Peels2014}:

\begin{quotation}

Imagine that I heard on the news that some maniac is poisoning people's chocolate bars in my neighbourhood. I notice that my chocolate bar is opened. But then, I know, we often leave opened chocolate bars in the desk and finish them later. I nonetheless decide to give it to my daughter. Imagine that it is poisoned. It seems that in that case I am blameworthy for giving it to my daughter; I act recklessly in doing so and violate an objective obligation not to give it to her. Still, it seems, I am not as blameworthy as I would have been if I had known that it was poisoned. Thus, my ignorance that it is poisoned makes me less blameworthy that I would have been if I had not been ignorant, but I am still blameworthy to some degree.

\end{quotation}

In this second situation, the agent considers the proposition `The chocolate bar might be poisoned' as possibly true, because he heard the news. By giving the chocolate bar to his daughter, the agent chooses to dismiss this possibility. For this reason, his ignorance cannot serve as a full excuse for his action. Indeed, this is a case of suspending ignorance which, following Peels, does not qualify as fully excusable ignorance. \footnote{Since this example corresponds to suspending ignorance, it follows \textit{a fortiori} that it is not appropriate to apply the excusing conditions outlined in propositions 1 -- 4. Recall that, for Peels, only cases of disbelieving ignorance or deep ignorance can ground a full moral excuse.}

Distinguishing situations of fully excusable ignorance from others that do not provide a full excuse is crucial when evaluating an agent's responsibility for their actions. Even though a formal setting that captures this distinction might prove to be very helpful in this task, one cannot find any formal representation of fully excusable ignorance in the literature.  
Our aim is to address this gap by developing a logical framework for representing excusable ignorance and its dynamic features. More specifically, we formalize cases of disbelieving and deep ignorance through the operator $I$ introduced in \citep{Kubyshkina2019}. It should be emphasized, however, that the framework presented here presupposes that the instances of disbelieving and deep ignorance captured by $I$ satisfy at least one of Peels' excusing conditions, as specified in propositions 1 -- 4.

The rest of this article is structured as follows. In Section \ref{FIL system section} we consider a logic for factive ignorance introduced by \citet{Kubyshkina2019} and we modify its semantics by allowing possible worlds to be incomplete. We show that the resulting system constitutes an adequate formal representation of fully excusable ignorance and we prove its soundness and completeness. In Section \ref{PAL section} we extend this setting with public announcements, which permit us to consider not only static situations involving fully excusable ignorance but also the possible change of the state of ignorance of an agent. In particular, we first discuss why the standard public announcement update is not suitable for our purposes. Second, we introduce an original alternative definition of the update procedure and prove its completeness. Finally, we reconsider the examples of \citet{Peels2014} in this dynamic setting and show how an announcement can turn excusable into non-excusable ignorance.

\section{A logic of excusable ignorance}
\label{FIL system section}

Several recent works in epistemic logic have focused on finding a way to model the notion of ignorance. Two interesting proposals can be found in \citep{Hoek2004} and \citep{Steinsvold2008}. The former defines ignorance as `not-knowing whether' and represent it via a modal operator that can be defined as $\neg K \phi \wedge \neg K \neg \phi$ in standard epistemic logic. \citet{Steinsvold2008} defines ignorance as `not-knowing the truth' and considers a primitive modal operator definable as $\phi \wedge \neg K \phi$. 
However, the applicability of both operators seems to be too large for expressing strictly speaking excusable ignorance. This is due to the fact that both definitions incorporate the case of suspending ignorance which is a case of non-excusable ignorance according to the characterization of Peels.\footnote{This is not a criticism of the Kripke semantics \textit{per se}, but of the definitions of the operators.}
Moreover, neither of the operators properly represent deep ignorance: once an agent is ignorant of a proposition, there exists at least one world in which the negation of this proposition is considered to be true. In what follows, we provide an alternative setting that is suitable to represent fully excusable ignorance.

\subsection{Syntax and semantics}

Our starting point is the logic \textbf{ELI} that was introduced by \citet{Kubyshkina2019}. This logic, which is characterized by standard Kripke semantics, contains a primitive modality $I$ that is defined as follows:

\begin{itemize}

\item $\mathcal{M}, w \models I \phi \Leftrightarrow$  for all $w' $ that are not $ w $ and such that $Rww'$, $\mathcal{M}, w' \not \models \phi $ and $\mathcal{M}, w \models \phi$.

\end{itemize}

Once the underlying logic is classical propositional logic, this definition is equivalent to the following:

\begin{itemize}

\item $\mathcal{M}, w \models I \phi \Leftrightarrow$  for all $w' $ that are not $ w $ and such that $Rww'$, $\mathcal{M}, w'  \models \neg \phi $ and $\mathcal{M}, w \models \phi$.

\end{itemize}

In accordance with this definition, an agent is ignorant of $\phi$ in a world $w$ if and only if $\phi$ is true but $\neg \phi$ holds in all accessible worlds from $w$.\footnote{Clearly, the worlds in which $\neg \phi$ is valid are not the world $w$ itself, because of the fact that the underlying semantics is two-valued and thus $\phi$ and $\neg \phi$ cannot be valid in the same world.} This corresponds to the situation of disbelieving ignorance, as described in the introduction. However, the use of $I$ operator does not permit one to model situations involving deep ignorance. This setting does not distinguish a situation in which a proposition is considered to be false (disbelieving ignorance), and another in which it is considered to be neither true nor false (deep ignorance). To capture this difference, we propose to use a three-valued setting, in which each proposition can take one of three values: `$1$' (true), `$0$' (false), and `$\emptyset$' (neither true, nor false). Semantically, it is common to use Kleene's logic for these purposes.\footnote{Recently, \citet{Bonzio2023} have introduced a definition of ``severe ignorance'' in a three-valued setting. The authors use Bochvar's logic and interpret the third value as ``meaningless,'' which is different from our purposes.}

In Kleene's strong logic (see \citet{K1938, Kleene1952}), the valuation function for formulas with propositional operators is defined as in Table \ref{Tables}. 

\begin{longtable}{ccc}
\caption{Kleene's operators}\\
\label{Tables}
\begin{tabular}{c|c}
$\phi$&$\neg \phi$\\\hline
$1$&$0$\\
$\emptyset$&$\emptyset$\\
$0$&$1$
\end{tabular}
&
\begin{tabular}{c|ccc}
$\phi \wedge \psi$&$1$&$\emptyset$&$0$\\\hline
$1$&$1$&$\emptyset$&$0$\\
$\emptyset$&$\emptyset$&$\emptyset$&$0$\\
$0$&$0$&$0$&$0$
\end{tabular}
&
\begin{tabular}{c|ccc}
$\phi \vee \psi$&$1$&$\emptyset$&$0$\\\hline
$1$&$1$&$1$&$1$\\
$\emptyset$&$1$&$\emptyset$&$\emptyset$\\
$0$&$1$&$\emptyset$&$0$
\end{tabular}
\end{longtable}

One can define Kleene's implication in accordance with standard definition: $\phi \rightarrow \psi := \neg \phi \vee \psi$. Being definable via disjunction and negation, Kleene's implication is already included in the setting we are describing. However, by doing so one would lose the deduction theorem.\footnote{Note that this carries both practical and theoretical disadvantages. Some authors have pointed to the deduction theorem as a key desideratum that the definition of formal deduction should meet (see, e.g., \citet{Montague1956}).} To avoid this, we add an implication defined as in Table 2. This implication is as close as possible to classical logic: implicative formulas can only take classical values, $1$ or $0$. From an intuitive perspective, this choice of implication contributes to say that $\phi \rightarrow \psi$ is true if and only if whenever the antecedent is true, the consequent is also true. By adding this implication we get an example of a well studied algebraic setting, namely, a weak Heyting algebra which satisfies several natural properties (see, e.g., \citet{Celani2005}, \citet{Bezhanashvili2011}, \citet{Celani2012}). Other types of implications can be considered, but for the purposes of this article, we leave this investigation aside.


\begin{longtable}{c}
\caption{Implication}\\
\label{implication}
\begin{tabular}{c|ccc}
$\phi \rightarrow \psi$&$1$&$\emptyset$&$0$\\\hline
$1$&$1$&$0$&$0$\\
$\emptyset$&$1$&$1$&$1$\\
$0$&$1$&$1$&$1$
\end{tabular}
\end{longtable}


Let us define the syntax and the semantics for our logic. Given a non-empty set $\texttt{Prop}$ of propositional variables and $p \in \texttt{Prop}$, the  language $\mathcal{L}$ is defined by the following grammar:

$$\phi :: = p \,\, | \,\, \neg \phi \,\, | \,\, \phi \wedge \phi \,\, | \,\, \phi \rightarrow \phi \,\, | \,\, I \phi$$

Other propositional operators are defined in a standard way: $\phi \vee \psi \Leftrightarrow \neg (\neg \phi \wedge \neg \psi)$ and $\phi \leftrightarrow \psi \Leftrightarrow (\phi \rightarrow \psi) \wedge (\psi \rightarrow \phi)$. A formula $I \phi$ has to be read as `the agent is excusably ignorant that $\phi$'. For simplicity, in what follows we introduce a single-agent setting but it can be extended to a multi-agent framework in a standard way.

In contrast from \citet{Kubyshkina2019}, we interpret the language $\mathcal{L}$ on Kripke semantics with \textit{incomplete} worlds; that is, worlds which can contain either $p$, or $\neg p$, or neither $p$ nor $\neg p$. 
The first semantic definition of our logic \textbf{LEI} (Logic of Excusable Ignorance) follows the lines of \citet[p. 284-285]{Odintsov2012}.

\begin{defi}
\label{frames1}

A \textbf{LEI}-frame $\mathcal{F} = (W, R)$ is a tuple where $W$ is a set of possible worlds and $R \subseteq W \times W$ is an accessibility relation. A \textbf{LEI}$^{1}$-model $\mathcal{M}^{1} = (\mathcal{F}, V)$, is a tuple where $\mathcal{F}$ is a Kripke frame and $V$ is a valuation function.  $V(p, w)$ assigns to each propositional variable $p$ either $\{1\}$ (true), or $\{0\}$ (false), or $\emptyset$ (neither true, nor false) at the world $w$. The valuation $V$ extends to all formulas of the language $\mathcal{L}$ as follows:

\begin{itemize}

\item $V(\phi \wedge \psi, w) = \{1\}$ iff $V(\phi, w) = \{1\}$ and $V(\psi, w) = \{1\}$;

\item[] $V(\phi \wedge \psi, w) = \{0\}$ iff $V(\phi, w) = \{0\}$ or $V(\psi, w) = \{0\}$;

\item[] $V(\phi \wedge \psi, w) = \emptyset$ otherwise.

\item $V(\neg \phi, w) =\{1\}$ iff $V(\phi, w) = \{0\}$;

\item[] $V(\neg \phi, w) =\{0\}$ iff $V(\phi, w) = \{1\}$;

\item[] $V(\neg \phi, w) = \emptyset$ otherwise.

\item $V(\phi \rightarrow \psi, w) = \{1\}$ iff ($V(\phi, w) = \{1\}$ and $V(\psi, w) = \{1\}$) or $V(\phi, w) \not = \{1\}$;

\item[] $V(\phi \rightarrow \psi, w) = \{0\}$ otherwise.

\item $ V(I \phi, w) = \{1\}$ iff for all $w'$ that are not $w$ and such that $Rww'$, $V(\phi, w') \not = \{1\}$ and $V( \phi, w) = \{1\}$;

\item[] $V(I \phi, w) = \{0\}$ otherwise.

\end{itemize}

We say that $\phi$ is valid on $\mathcal{M}^{1}$ and write $\mathcal{M}^{1} \models \phi$ if $V(\phi, w) = \{1\}$ for all $w$ of $\mathcal{M}^{1}$.  If for all $\mathcal{M}^{1}$ based on $\mathcal{F}$ we have $\mathcal{M}^{1} \models \phi$, then we say that $\phi$ is valid on $\mathcal{F}$ and write $\mathcal{F} \models \phi$.


\end{defi}

In accordance with this  definition, fully modalized formulas (i.e., formulas in which all propositional variables are under the scope of $I$) can only take classical values $\{1\}$ or $\{0\}$.  Intuitively, this corresponds to the fact that an agent is either excusably ignorant of a proposition, or she is not. If the latter is the case, then she is either inexcusably ignorant of a proposition, or she is not ignorant of it at all.

For the sake of the completeness proof, it is convenient to have another semantic definition of \textbf{LEI} that is closer to the standard Kripke semantics.


\begin{defi}[Frames, Models, and Satisfaction]
\label{frames}

A \textbf{LEI}-frame $\mathcal{F} = (W, R)$ is a tuple where $W$ is a set of possible worlds, and $R \subseteq W \times W$ is an accessibility relation. A \textbf{LEI}-model $\mathcal{M} = (\mathcal{F}, v)$, is a tuple where $\mathcal{F}$ is a \textbf{LEI}-frame and  $v$ is a valuation function such that, for each atomic proposition $p$, $v(p) \longrightarrow P(W)$, $v(\neg p) \longrightarrow P(W)$ and $v(p) \cap v(\neg p) = \emptyset$.
Given a model $\mathcal{M}$ and a formula $\phi$, we say that $\phi$ is true in $\mathcal{M}$ at world $w$, written $\mathcal{M}, w \models \phi$ if:

\begin{itemize}
\item $\mathcal{M}, w \models p$ if $w \in v(p)$;
\item $\mathcal{M}, w \models \neg p$ if $w \in v(\neg p)$;
\item $\mathcal{M}, w \models \neg \neg \phi$ if $\mathcal{M}, w \models \phi$;
\item $\mathcal{M}, w \models \phi \wedge \psi$ if $\mathcal{M}, w \models \phi$ and $\mathcal{M}, w \models \psi$;
\item $\mathcal{M}, w \models \neg (\phi \wedge \psi)$ if $\mathcal{M}, w \models \neg \phi$ or $\mathcal{M}, w \models \neg \psi$;
\item $\mathcal{M}, w \models \phi \rightarrow \psi$ if $\mathcal{M}, s \models \phi$ implies $\mathcal{M}, w \models \psi$;
\item $\mathcal{M}, w \models \neg (\phi \rightarrow \psi)$ if $\mathcal{M}, w \not \models \phi \rightarrow \psi$;
\item $\mathcal{M}, w \models I\phi$ if for all $w'$ that are not $w$ and such that $Rww'$, $\mathcal{M}, w'  \not \models  \phi$ and $\mathcal{M}, w \models \phi$;
\item $\mathcal{M}, w \models \neg I\phi$ if either there exists $w'$ that is not $w$ and such that $Rww'$, $\mathcal{M}, w' \models  \phi$, or $\mathcal{M}, w \not \models \phi$.
\end{itemize} 

We say that $\phi$ is valid on $\mathcal{M}$ and write $\mathcal{M} \models \phi$ if $\mathcal{M}, w \models \phi$ for all $w$ in $W$. If for all $\mathcal{M}$ based on $\mathcal{F}$ we have $\mathcal{M} \models \phi$, then we say that $\phi$ is valid on $\mathcal{F}$ and write $\mathcal{F} \models \phi$.

\end{defi}

Even though, Definition \ref{frames} does not provide a clause for $\mathcal{M}, w \models \neg \phi$, the formulas of a form $\neg \phi$ in $\mathcal{M}$ are defined inductively on the length of the formula $\phi$.

Definition \ref{frames} is equivalent to Definition \ref{frames1}. Similarly to \citep{Odintsov2012}, we assign a model $\mathcal{M}^{1} = (W, R,V)$  to a model $\mathcal{M} = (W, R, v)$, where 

$$V(p, w) = \{1\} \text{ iff } w \in v(p);$$

$$V(p, w) = \{0\} \text{ iff } w \in v(\neg p);$$

$$V(p, w) = \emptyset \text{ iff } w \not \in v(p) \text{ and } w \not \in v(\neg p).$$

It is easy to check that this relation extends to arbitrary formulas:

$$V(\phi, w) = \{1\} \text{ iff } \mathcal{M}, w \models \phi;$$

$$V(\phi, w) = \{0\} \text{ iff } \mathcal{M}, w \models \neg \phi;$$

$$V(\phi, w) = \emptyset \text{ iff } \mathcal{M}, w \not \models \phi \text{ and } \mathcal{M}, w \not \models \neg \phi.$$

Notice that Definitions \ref{frames1} and \ref{frames} do not impose any restrictions on the accessibility relation. This indicates that our accessibility relation and the worlds are not the indistinguishability relation and the epistemic states, respectively, as in standard epistemic logic. In our reading, the fact that a world $w'$ is accessible from the world $w$ means that if the world $w$ is the actual one in which the agent reasons, then the world $w'$ is a world that contains some possible truths from the agent's perspective. The possible worlds that are accessible from $w$ represent thus  the sets of propositions that an agent considers as possibly true. For instance, an agent may hesitate to assert whether or not the name of the author of the novel \textit{Midnight's Children} is Salman Rushdie. In this case the agent considers at least two accessible worlds: the one in which the author's name is Salman Rushdie, and the one in which it is not. Moreover, if the agent has never heard of this novel, and thus had never thought about the name of its author, then they will consider neither the worlds in which the proposition ``The name of the author of the novel \textit{Midnight's Children} is Salman Rushdie'' is true nor the worlds in which it is false. However, the agent might hesitate about whether the proposition ``Kashmir is in India'' is true, that is, there may be accessible worlds in which it is true and others in which it is false. Now consider the valuation of ``The author of the novel \textit{Midnight's Children} is Salman Rushdie'' in those accessible worlds where ``Kashmir is in India'' is either true or false. With the third truth value $\emptyset$, the proposition about Salman Rushdie can be assigned this value (i.e., neither ``true'' nor ``false'') in such worlds. This captures the idea that, while the agent reflects on the truth conditions of ``Kashmir is in India,'' she does not regard the proposition about the author of \textit{Midnight's Children} as either true or false. In summary, accessible possible worlds indicate which propositions an agent considers as possibly true. If a proposition takes the value $\emptyset$ in a world, this means that its truth conditions are not considered (or are irrelevant) for the agent, unlike those propositions for which she does consider the classical truth conditions.

Note that reflexivity does not affect our understanding of the accessibility relation and accessible  worlds. As discussed in \citep{Gilbert2020}, the logic \textbf{ELI} is reflexive-insensitive (i.e., \textbf{ELI}  is a logic which is insensitive to the presence of reflexivity in the accessibility relation, see \citet{Gilbert2016}). The same observation holds for \textbf{LEI}. From this perspective, the worlds that are  possible for an agent at a point $w$ are always the worlds that are not $w$ itself. The agent always reasons on the basis of their \textit{hypotheses} of what the actual world looks like but not on the basis of what it really is.

In accordance with Definitions \ref{frames1} and \ref{frames}, the underlying logic for each world would be Kleene's strong logic. The formal system for the language containing non-modal formulas can be found in \citep{Robles2019}, where it is dubbed \textbf{Lt2}.

\begin{defi}[System \textbf{Lt2}] 
\label{KlFDE}

\

\begin{itemize}

\item The axiom schemes:

\begin{itemize}

\item[A1.] $(\phi \wedge \psi) \rightarrow \phi$

\item[A2.] $(\phi \wedge \psi) \rightarrow \psi$

\item[A3.] $((\phi \rightarrow \psi) \wedge (\phi \rightarrow \chi)) \rightarrow (\phi \rightarrow (\psi \wedge \chi))$

\item[A4.] $\phi \rightarrow (\phi \vee \psi)$

\item[A5.] $\psi \rightarrow (\phi \vee \psi)$

\item[A6.] $((\phi \rightarrow \chi) \wedge (\psi \rightarrow \chi)) \rightarrow ((\phi \vee \psi) \rightarrow \chi)$

\item[A7.] $(\phi \wedge (\psi \vee \chi)) \rightarrow ((\phi \wedge \psi) \vee (\phi \wedge \chi))$

\item[A8.] $\neg(\phi \vee \psi) \leftrightarrow (\neg \phi \wedge \neg \psi)$

\item[A9.] $\neg (\phi \wedge \psi) \leftrightarrow (\neg \phi \vee \neg \psi)$

\item[A10.] $\phi \leftrightarrow \neg \neg \phi$

\item[A11.] $((\phi \rightarrow \psi) \wedge \phi) \rightarrow \psi$

\item[A12.] $\psi \rightarrow (\phi \rightarrow \psi)$

\item[A13.] $\phi \vee (\phi \rightarrow \psi)$

\item[A14.] $\phi \rightarrow (\psi \vee \neg (\phi \rightarrow \psi))$

\end{itemize}

\item The rules of inference:

\begin{itemize}

\item[(Adj)] from $ \phi$ and $ \psi$ infer $\phi \wedge \psi$

\item[(MP)] from $\phi \rightarrow \psi$ and $ \phi$ infer $\psi$

\item[(dMP)] from $ \chi \vee (\phi \rightarrow \psi)$ and $ \chi \vee \phi$ infer $ \chi \vee \psi$

\item[(dTrans)] from $ \rho \vee (\phi \rightarrow \psi)$ and $ \rho \vee (\psi \rightarrow \chi)$ infer $ \rho \vee (\phi \rightarrow \chi)$

\item[(dECQ)] from $ \chi \vee (\phi \wedge \neg \phi)$ infer $ \chi \vee \psi$

\end{itemize}

\end{itemize}

\end{defi}

As noticed in \citep{Robles2019}, the following rules and tautologies are derivable from the axiom schemes and rules presented in Definition \ref{KlFDE}.

\begin{prop}

The following rules are derivable in \textbf{Lt2}:

\begin{itemize}

\item[(Trans)] from $ \phi \rightarrow \psi$ and $ \psi \rightarrow \chi$ infer $ \phi \rightarrow \chi$

\item[(ECQ)] from $ \phi \wedge \neg \phi$ infer $ \psi$

\item[(t1)] $\phi \rightarrow \phi$

\end{itemize}

\end{prop}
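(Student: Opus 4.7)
The plan is to derive the three rules in the order (t1), (Trans), (ECQ). The rule (t1) is foundational: once it is established, $\vee$-idempotence $(\alpha \vee \alpha) \rightarrow \alpha$ follows from A6 with $\phi := \psi := \chi := \alpha$, and then (Trans) and (ECQ) fall out of their disjunctive counterparts (dTrans) and (dECQ) by padding the premises with a suitable disjunct and collapsing the result.

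For (t1), the two useful starting points are A13 with $\psi := \phi$, which gives $\phi \vee (\phi \rightarrow \phi)$, and A12 with $\psi := \phi$, which gives $\phi \rightarrow (\phi \rightarrow \phi)$. I would pad the A12 instance via A5 and MP to obtain $(\phi \rightarrow \phi) \vee (\phi \rightarrow (\phi \rightarrow \phi))$, and use commutativity of $\vee$ --- derivable from A4, A5, A6, (Adj) and MP without invoking (t1), because the required side premises are direct instances of A4 and A5 --- to rewrite the A13 instance as $(\phi \rightarrow \phi) \vee \phi$. Then (dMP) with $\chi := \phi \rightarrow \phi$, $\phi := \phi$, $\psi := \phi \rightarrow \phi$ yields $(\phi \rightarrow \phi) \vee (\phi \rightarrow \phi)$, and a final dedicated step collapses this to $\phi \rightarrow \phi$.

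For (Trans), given $\phi \rightarrow \psi$ and $\psi \rightarrow \chi$, the strategy is to pad both premises via A5 and MP with the disjunct $\phi \rightarrow \chi$, obtaining $(\phi \rightarrow \chi) \vee (\phi \rightarrow \psi)$ and $(\phi \rightarrow \chi) \vee (\psi \rightarrow \chi)$; apply (dTrans) with $\rho := \phi \rightarrow \chi$ to deduce $(\phi \rightarrow \chi) \vee (\phi \rightarrow \chi)$, then collapse via $\vee$-idempotence. For (ECQ), from $\phi \wedge \neg \phi$ I would pad via A5 and MP to $\psi \vee (\phi \wedge \neg \phi)$, apply (dECQ) to obtain $\psi \vee \psi$, and collapse via $\vee$-idempotence to $\psi$.

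The main obstacle is the collapse of $(\phi \rightarrow \phi) \vee (\phi \rightarrow \phi)$ to $\phi \rightarrow \phi$ in the proof of (t1): the standard $\vee$-idempotence route via A6 would itself require $(\phi \rightarrow \phi) \rightarrow (\phi \rightarrow \phi)$, i.e.\ (t1) instantiated at $\phi \rightarrow \phi$, and is therefore circular. Avoiding this circularity is the delicate maneuver of the argument; it is internal to the disjunctive-rule machinery of \textbf{Lt2} and hinges on A13 acting as a weak excluded middle for implicative formulas which, together with A12 and the disjunctive inference rules, bootstraps (t1) without presupposing any form of $\vee$-idempotence.
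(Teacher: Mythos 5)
There is a genuine gap, and you have in fact pointed straight at it yourself. First, for context: the paper offers no proof of this proposition at all --- it simply imports the derivability of (Trans), (ECQ) and (t1) from \citet{Robles2019} --- so there is no in-paper argument to compare against, and your attempt at an explicit derivation is more than the paper does. Your reductions are sound as far as they go: commutativity of $\vee$ really is available from A4, A5, A6, (Adj) and (MP) without (t1); the passage from the A13 instance $\phi \vee (\phi \rightarrow \phi)$ and the padded A12 instance to $(\phi \rightarrow \phi) \vee (\phi \rightarrow \phi)$ via (dMP) is a correct application of that rule; and given (t1), your derivations of (Trans) and (ECQ) from (dTrans) and (dECQ) plus $\vee$-idempotence go through.

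The problem is that the ``final dedicated step'' collapsing $(\phi \rightarrow \phi) \vee (\phi \rightarrow \phi)$ to $\phi \rightarrow \phi$ is never supplied, and it is the only step in the entire proposal that carries real content: everything else --- (Trans), (ECQ), and $\vee$-idempotence itself --- funnels through it. You correctly diagnose that the A6 route is circular, but then replace the missing argument with the assertion that the disjunctive-rule machinery ``bootstraps (t1) without presupposing any form of $\vee$-idempotence.'' Nothing in the proposal substantiates this, and the obstruction is structural: in the system of Definition~3, (Adj), (dMP), (dTrans) and (dECQ) only ever output conjunctions or disjunctions, and $\phi \rightarrow \phi$ is not an instance of any of A1--A14 when $\phi$ is atomic, so a bare $p \rightarrow p$ can only arrive by (MP) from some theorem $X$ together with $X \rightarrow (p \rightarrow p)$; chasing how $X \rightarrow (p \rightarrow p)$ could itself be obtained (A1, A2, A6, A11, A12, the negation axioms) leads in each case back to needing some instance of (t1) or to a premise such as $p$ that is not a theorem. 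So the disjunctive simulation of proof-by-cases always terminates in $\chi \vee \chi$ and hands the difficulty back to idempotence. To repair the proof you need to either locate and reproduce the actual derivation of t1 in \citet{Robles2019} (whose axiomatic basis may not coincide exactly with the paper's rendering of \textbf{Lt2}), or exhibit the concrete sequence of steps performing the collapse; as it stands, none of the three rules is established.
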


Our system of excusable ignorance \textbf{LEI} is an extension of \textbf{Lt2} and is defined as follows.

\begin{defi}[System \textbf{LEI}]
\label{FILdefi}

\

\begin{itemize}

\item All the axiom schemes and rules of inference of Definition \ref{KlFDE}

\item The axiom schemes:

\begin{itemize}

\item[(\textit{fact})] $I \phi \rightarrow \phi$

\item[($I \wedge$)] $(I\phi \wedge I\psi) \rightarrow I(\phi \vee \psi)$

\item[($emI$)] $I \phi \vee \neg I \phi$

\end{itemize}

\item The rule: 

\begin{itemize}

\item[(IR)] from $ \vdash \phi \rightarrow \psi$ infer $ \vdash \phi \rightarrow (I \psi \rightarrow I \phi)$

\end{itemize}

\end{itemize}

A derivation of \textbf{LEI} is a finite sequence of $\mathcal{L}$-formulas such that each formula is either the instantiation of an axiom scheme or the result of applying an inference rule to previous formulas in the sequence. A formula $\phi \in \mathcal{L}$ is called a theorem, noted $\vdash \phi$, if it occurs in a derivation of \textbf{LEI}. An expression $\Gamma \vdash \phi$ means that $\phi$ can be obtained from a set of formulas $\Gamma$ by applying axiom schemes and rules of \textbf{LEI}.

\end{defi}

The deduction theorem ($DT$) holds for the system \textbf{LEI}.

\begin{prop}
\label{dedt}

For any $\phi, \psi \in \mathcal{L}$:

\begin{itemize}

\item[($DT$)] $\Gamma, \phi \vdash \psi$  iff $ \Gamma \vdash \phi \rightarrow \psi$

\end{itemize}

\end{prop}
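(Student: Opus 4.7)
The plan is to establish both directions of ($DT$) with the left-to-right direction being the substantive one. The right-to-left direction is immediate: if $\Gamma \vdash \phi \rightarrow \psi$, then appending $\phi$ to the assumptions and applying (MP) yields $\Gamma, \phi \vdash \psi$.

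For the left-to-right direction I would proceed by strong induction on the length $n$ of a derivation of $\psi$ from $\Gamma \cup \{\phi\}$. In the base case ($n=1$) there are three subcases. If $\psi$ is either an instance of an axiom scheme or a member of $\Gamma$, then $\Gamma \vdash \psi$, and axiom (A12) together with (MP) gives $\Gamma \vdash \phi \rightarrow \psi$. If $\psi = \phi$, the derived theorem (t1) provides $\vdash \phi \rightarrow \phi$ directly.

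For the inductive step I would treat each inference rule in turn. For (Adj), the induction hypothesis yields $\Gamma \vdash \phi \rightarrow \psi_1$ and $\Gamma \vdash \phi \rightarrow \psi_2$; applying (Adj) itself and then axiom (A3) together with (MP) produces $\Gamma \vdash \phi \rightarrow (\psi_1 \wedge \psi_2)$. For (MP), where the conclusion $\psi$ comes from $\alpha$ and $\alpha \rightarrow \psi$, the induction hypothesis gives $\Gamma \vdash \phi \rightarrow \alpha$ and $\Gamma \vdash \phi \rightarrow (\alpha \rightarrow \psi)$; combining these via (Adj) and (A3) yields $\Gamma \vdash \phi \rightarrow ((\alpha \rightarrow \psi) \wedge \alpha)$, and then (A11) with the derived rule (Trans) closes the case. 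For the rule (IR), the premise $\phi' \rightarrow \psi'$ must already be a theorem of \textbf{LEI}, so its conclusion $\phi' \rightarrow (I\psi' \rightarrow I\phi')$ is too, and a single use of (A12) and (MP) again suffices.

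The main obstacle will be the three disjunctive rules (dMP), (dTrans), and (dECQ). Take (dMP) as representative: the inductive hypothesis supplies $\Gamma \vdash \phi \rightarrow (\chi \vee (\alpha \rightarrow \beta))$ and $\Gamma \vdash \phi \rightarrow (\chi \vee \alpha)$, while we require $\Gamma \vdash \phi \rightarrow (\chi \vee \beta)$. My plan is to derive once and for all the auxiliary equivalence $\vdash (\phi \rightarrow (\chi \vee \gamma)) \leftrightarrow (\chi \vee (\phi \rightarrow \gamma))$, so that the disjunctive rule can be applied inside the prefix. The $\leftarrow$ direction of this equivalence follows from (A12), (A5), (Trans) and (A6); the $\rightarrow$ direction is the harder part and I would obtain it by case-splitting on (A13) $\phi \vee (\phi \rightarrow \gamma)$ using (dMP) itself with a carefully chosen disjunct, together with (A14) to handle the mixed values permitted by the three-valued semantics. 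Once this lemma is in hand, the remaining rules (dTrans) and (dECQ) are absorbed analogously by applying them under the $\chi \vee$ prefix and stripping it back with the equivalence. Verifying this transfer of the disjunctive rules through the implication is where I expect the bulk of the technical work to lie.
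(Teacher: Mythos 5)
Your proof follows essentially the same route as the paper's: induction on the length of a derivation of $\psi$ from $\Gamma \cup \{\phi\}$, with the right-to-left direction dispatched by (MP); your handling of (IR) --- noting that its premise, and hence its conclusion, must already be a theorem, so that one application of (A12) and (MP) prefixes $\phi$ --- is precisely the single case the paper writes out explicitly, and your base cases and your (Adj)/(MP) cases via (A3), (A11) and (Trans) are the standard ones the paper leaves implicit. Where you differ is in coverage: the paper delegates every remaining case ``to the reader,'' whereas you correctly isolate the disjunctive rules (dMP), (dTrans), (dECQ) as the only genuinely delicate ones. Your pivotal equivalence $(\phi \rightarrow (\chi \vee \gamma)) \leftrightarrow (\chi \vee (\phi \rightarrow \gamma))$ is valid in the three-valued matrix (each side is designated exactly when $\phi$ is undesignated or one of $\chi$, $\gamma$ is designated), so the plan is sound in principle; be aware, however, that even after transposing, (dMP) does not apply directly, since the premises become $\chi \vee (\phi \rightarrow (\alpha \rightarrow \beta))$ and $\chi \vee (\phi \rightarrow \alpha)$ rather than a matching pair $\chi \vee (A \rightarrow B)$ and $\chi \vee A$; you additionally need the (matrix-valid, hence derivable) self-distribution $(\phi \rightarrow (\alpha \rightarrow \beta)) \rightarrow ((\phi \rightarrow \alpha) \rightarrow (\phi \rightarrow \beta))$ under the $\chi\vee$ prefix. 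An alternative that avoids the transposition lemma altogether is to combine the two induction hypotheses with (A3), distribute $(\chi \vee (\alpha\rightarrow\beta)) \wedge (\chi \vee \alpha)$ into $\chi \vee ((\alpha\rightarrow\beta)\wedge\alpha)$ using (A7), apply (A11) inside the disjunct via $\vee$-monotonicity (from (A4), (A5), (A6)), and close with (Trans); the (dECQ) case similarly reduces to the theorem $(\alpha\wedge\neg\alpha)\rightarrow\psi$ plus $\vee$-monotonicity. Either way, the auxiliary propositional theorems you need are all valid in the Lt2 matrix, so your outline is on firm ground and in fact supplies detail the paper omits.
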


The proof of this proposition is in Appendix \ref{appDT}.

The proofs of the following propositions are in Appendix \ref{Fanprops}.


\begin{prop}
\label{Arrow need-1}

\

\begin{itemize}

\item[(T1)] $\neg \phi \rightarrow (\phi \rightarrow \psi)$

\end{itemize}

\end{prop}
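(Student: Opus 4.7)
The plan is to apply the deduction theorem (Proposition \ref{dedt}) twice, reducing the claim $\vdash \neg \phi \rightarrow (\phi \rightarrow \psi)$ to the task of showing $\neg \phi, \phi \vdash \psi$, and then to invoke the derived rule (ECQ) that was stated immediately after Definition \ref{KlFDE}.

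In more detail, first I would observe that, by ($DT$), proving $\vdash \neg \phi \rightarrow (\phi \rightarrow \psi)$ is equivalent to proving $\neg \phi \vdash \phi \rightarrow \psi$, which in turn is equivalent to proving $\neg \phi, \phi \vdash \psi$. So the work reduces to this single Hilbert-style derivation from two assumptions. Next, from the assumptions $\phi$ and $\neg\phi$, one application of the rule (Adj) yields $\phi \wedge \neg \phi$. Finally, applying the derived rule (ECQ), namely ``from $\phi \wedge \neg\phi$ infer $\psi$,'' gives $\psi$, completing the derivation.

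There is essentially no obstacle here: the only ingredients required are ($DT$), (Adj), and (ECQ), all of which are available by earlier results in the excerpt. One small point worth being explicit about is that (ECQ) is a derived rule of \textbf{Lt2} (and hence of \textbf{LEI}), not an axiom, so the proof should cite the proposition in which it was shown derivable rather than apply it as if it were built in. Apart from that, the argument is three lines long and entirely mechanical.
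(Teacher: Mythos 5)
Your proof is correct and is essentially identical to the paper's own: both reduce the claim via two applications of ($DT$) to $\neg\phi, \phi \vdash \psi$, then combine the assumptions with (Adj) and conclude by the derived rule (ECQ). No further comment is needed.
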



\begin{prop}
\label{Arrow need}

\

\begin{itemize}

\item[(T2)] $(\neg \phi \vee \psi) \rightarrow (\phi \rightarrow \psi)$

\end{itemize}

\end{prop}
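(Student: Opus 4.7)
The plan is to derive (T2) by a case-analysis on the disjunction $\neg\phi \vee \psi$, using (T1) for the $\neg\phi$ case and axiom A12 for the $\psi$ case, then combining them with A6.

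First I would invoke Proposition \ref{Arrow need-1} to get $\vdash \neg\phi \rightarrow (\phi \rightarrow \psi)$. Next, axiom A12 directly gives $\vdash \psi \rightarrow (\phi \rightarrow \psi)$ (it is just A12 read in the form $\chi \rightarrow (\phi \rightarrow \chi)$ with $\chi := \psi$). Applying the rule (Adj) yields the conjunction
\[
\bigl(\neg\phi \rightarrow (\phi \rightarrow \psi)\bigr) \wedge \bigl(\psi \rightarrow (\phi \rightarrow \psi)\bigr).
\]

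Then I would instantiate axiom A6 with $\neg\phi$, $\psi$, and $(\phi \rightarrow \psi)$ in place of $\phi$, $\psi$, and $\chi$ respectively, obtaining
\[
\bigl((\neg\phi \rightarrow (\phi \rightarrow \psi)) \wedge (\psi \rightarrow (\phi \rightarrow \psi))\bigr) \rightarrow \bigl((\neg\phi \vee \psi) \rightarrow (\phi \rightarrow \psi)\bigr).
\]
A final application of (MP) to this instance of A6 together with the conjunction derived above delivers the desired theorem $(\neg\phi \vee \psi) \rightarrow (\phi \rightarrow \psi)$.

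There is no real obstacle here: the argument is purely propositional and only uses one previously established theorem (T1), one axiom (A12), one case-analysis axiom (A6), together with the basic rules (Adj) and (MP). Should one wish to avoid invoking A12 under its ``weakening'' reading, one could equally well apply (DT) to the trivially derivable $\psi \vdash \phi \rightarrow \psi$ (itself immediate from (t1) and weakening via A12 or DT), but the direct route via A12 is shorter.
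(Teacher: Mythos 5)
Your proof is correct and is essentially identical to the paper's own derivation: both obtain $\neg\phi \rightarrow (\phi\rightarrow\psi)$ from (T1), pair it with the A12 instance $\psi \rightarrow (\phi\rightarrow\psi)$ via (Adj), and discharge the disjunction with the corresponding instance of A6 followed by (MP). No issues.
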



\begin{prop}
\label{prop5}

\

\begin{itemize}

\item[(R1)] from $\phi \rightarrow (\psi \rightarrow \chi)$ infer $(\phi \wedge \psi) \rightarrow \chi$

\end{itemize}

\end{prop}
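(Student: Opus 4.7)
The plan is to derive this entirely inside the propositional fragment \textbf{Lt2}, using the deduction theorem (Proposition \ref{dedt}) which has just been established. I would proceed semantically-informally first to check feasibility, then translate into the axiomatic system.

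First, I would assume the premise $\phi \rightarrow (\psi \rightarrow \chi)$ and additionally assume $\phi \wedge \psi$, with the aim of deriving $\chi$ and then closing off the extra assumption via $DT$. From $\phi \wedge \psi$, axiom A1 together with (MP) gives $\phi$, and axiom A2 together with (MP) gives $\psi$. Applying (MP) to the premise and $\phi$ yields $\psi \rightarrow \chi$, and one further application of (MP) with $\psi$ yields $\chi$. Invoking ($DT$) then gives $(\phi \wedge \psi) \rightarrow \chi$, as required.

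If one prefers to avoid ($DT$) and give a purely equational derivation, the alternative route is: combine axiom A1 with the hypothesis via (Trans) to obtain $(\phi \wedge \psi) \rightarrow (\psi \rightarrow \chi)$; combine this with A2 (i.e., $(\phi \wedge \psi) \rightarrow \psi$) using axiom A3 to conjoin consequents, producing $(\phi \wedge \psi) \rightarrow ((\psi \rightarrow \chi) \wedge \psi)$; finally apply A11 in the instance $((\psi \rightarrow \chi) \wedge \psi) \rightarrow \chi$ and close with (Trans).

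I do not anticipate a genuine obstacle here: the statement is essentially currying, and both A11 and ($DT$) make it immediate. The only point worth being careful about is that, since the underlying logic is non-classical, one should not silently appeal to standard intuitionistic or classical manipulations of implication; sticking to the explicitly listed axioms (A1, A2, A3, A11) and rules ((MP), (Trans), and ($DT$)) keeps the argument transparent and within \textbf{Lt2}.
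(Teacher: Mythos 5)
Your second, DT-free derivation is essentially verbatim the paper's own proof: (Trans) applied to A1 and the hypothesis, then A3 together with (Adj) and (MP) to conjoin the consequents, and finally A11 closed off with (Trans). Your first route via the deduction theorem is also sound, since Proposition \ref{dedt} is established before this point; either version suffices.
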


\begin{lemma}[Soundness]
\label{sound}

The system \textbf{LEI} is sound with respect to the class of all frames.

\end{lemma}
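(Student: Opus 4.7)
The plan is to proceed by induction on the length of a derivation in \textbf{LEI}, showing that each axiom scheme is valid on every frame and that each inference rule preserves validity. I will work throughout with the equivalent formulation $\mathcal{M}^{1}$ from Definition \ref{frames1}, where a formula is valid at $w$ iff it receives value $\{1\}$ there. The propositional fragment is handled by appeal to the soundness of \textbf{Lt2}: the semantic clauses of Definition \ref{frames1} restricted to connectives are exactly Kleene's strong tables together with the implication table (Table \ref{implication}), so the validity of A1--A14 and the validity-preservation of (Adj), (MP), (dMP), (dTrans), (dECQ) on any single point follow from the results of \citet{Robles2019}. Since each of those axioms and rules is evaluated world by world with no interaction with $R$, pointwise validity upgrades immediately to model validity.

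For the modal axioms I would argue directly from the clause defining $V(I\phi,w)$. For (\textit{fact}), if $V(I\phi,w)=\{1\}$ then $V(\phi,w)=\{1\}$ by the first conjunct of that clause, and by Table \ref{implication} this makes $V(I\phi\rightarrow\phi,w)=\{1\}$; if instead $V(I\phi,w)=\{0\}$, the implication is again $\{1\}$. For ($emI$), the semantic clause forces $V(I\phi,w)\in\{\{1\},\{0\}\}$, so $I\phi\vee\neg I\phi$ always evaluates to $\{1\}$ by the Kleene $\vee$-table. For ($I\wedge$), assume $V(I\phi,w)=V(I\psi,w)=\{1\}$: then $V(\phi,w)=V(\psi,w)=\{1\}$, hence $V(\phi\vee\psi,w)=\{1\}$; and for every $w'\neq w$ with $Rww'$, both $V(\phi,w')\neq\{1\}$ and $V(\psi,w')\neq\{1\}$, whence $V(\phi\vee\psi,w')\neq\{1\}$ since the Kleene table yields $\{1\}$ only when some disjunct is $\{1\}$. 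This gives $V(I(\phi\vee\psi),w)=\{1\}$, and Table \ref{implication} closes the case.

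The most delicate item is the rule (IR): assuming $\vdash\phi\rightarrow\psi$, and by the inductive hypothesis that $\phi\rightarrow\psi$ is valid, one must show $\phi\rightarrow(I\psi\rightarrow I\phi)$ is valid. Fix a model and a world $w$; the only nontrivial case is $V(\phi,w)=\{1\}$ and $V(I\psi,w)=\{1\}$, and we must derive $V(I\phi,w)=\{1\}$. The factivity clause gives us $V(\phi,w)=\{1\}$ for free. For the universal clause, suppose for contradiction that some $w'\neq w$ with $Rww'$ satisfies $V(\phi,w')=\{1\}$; by validity of the premise, $V(\phi\rightarrow\psi,w')=\{1\}$, and inspecting Table \ref{implication} this forces $V(\psi,w')=\{1\}$, contradicting $V(I\psi,w)=\{1\}$. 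Hence no such $w'$ exists, and $V(I\phi,w)=\{1\}$.

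The main obstacle, in my view, is not any single case but keeping the three-valued bookkeeping honest: one must repeatedly remember that the implication of Table \ref{implication} is two-valued, that $I$-formulas are two-valued, and that the Kleene clauses for $\wedge$ and $\vee$ are non-classical only on the value $\emptyset$. Once this discipline is in place, the rule (IR) is the one place where the interaction between frame structure and the implication table has to be exploited; the observation that $V(\phi,w')=\{1\}$ together with $V(\phi\rightarrow\psi,w')=\{1\}$ forces $V(\psi,w')=\{1\}$ is exactly what converts a provable implication into the transfer of ignorance required by the rule.
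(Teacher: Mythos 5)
Your proposal is correct and follows essentially the same route as the paper: a case-by-case verification of the axiom schemes and rules, delegating the propositional fragment to the soundness of \textbf{Lt2} in \citet{Robles2019}, and arguing the modal cases directly from the semantic clause for $I$ (the paper handles (\textit{fact}) and ($I\wedge$) by citation to \citet{Kubyshkina2019} and argues ($emI$) and ($IR$) by contradiction, but the content is the same). The only presentational difference is that you work with the valuational semantics of Definition \ref{frames1} while the paper uses the satisfaction relation of Definition \ref{frames}, an equivalence the paper itself notes suffices for reconstructing the argument either way.
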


The proof of the lemma is available in Appendix \ref{AppA}.

\subsection{Completeness}

We prove the completeness of \textbf{LEI} by constructing a canonical model. Let us start by introducing the following observations that are used in the completeness proof.

Uniform Substitution is a derivable rule in \textbf{LEI} and we use it in the proof of the Truth Lemma.

\begin{prop}
\label{subst}

For any formula $\alpha$ whose propositional variables are included in $\{p_1,\dots, p_n\}$, and $\beta_1,\dots, \beta_{n}$ are any well-formed formulas (wff) of \textbf{LEI}, then $\alpha[\beta_1/p_1,\dots, \beta_n/p_n]$ is the formula that results from uniformly substituting $\beta_i$ for $p_i$ in $\alpha$. 

$$\text{From } \vdash \alpha \text{ infer } \vdash \alpha[\beta_1/p_1,\dots, \beta_n/p_n] \eqno(US)$$
\end{prop}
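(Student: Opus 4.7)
The plan is to prove the derivability of (US) by straightforward induction on the length of a derivation $\vdash \alpha$ in \textbf{LEI}, showing that each step in the derivation is preserved under uniform substitution. Write $\sigma = [\beta_1/p_1,\dots,\beta_n/p_n]$ and observe first, by induction on the construction of formulas, that substitution commutes with the syntactic connectives: $(\neg \phi)\sigma = \neg(\phi\sigma)$, $(\phi \wedge \psi)\sigma = \phi\sigma \wedge \psi\sigma$, $(\phi \rightarrow \psi)\sigma = \phi\sigma \rightarrow \psi\sigma$, and $(I\phi)\sigma = I(\phi\sigma)$. This is the key book-keeping fact that makes the induction go through.

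For the base case, suppose $\alpha$ is an instance of one of the axiom schemes A1--A14, (\textit{fact}), ($I\wedge$), or ($emI$). Since every axiom is given by a schema over the meta-variables $\phi, \psi, \chi, \rho$, the formula $\alpha\sigma$ is another instance of the very same schema (one just reads the meta-variables as ranging over $\phi\sigma, \psi\sigma,\dots$), so $\vdash \alpha\sigma$.

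For the inductive step, suppose $\alpha$ was obtained from previously derived formulas $\gamma_1, \dots, \gamma_k$ by one of the inference rules (Adj), (MP), (dMP), (dTrans), (dECQ), or (IR). The inductive hypothesis gives $\vdash \gamma_i \sigma$ for each $i$. For the propositional rules, the commutation observation above immediately shows that applying the same rule to the $\gamma_i\sigma$ yields $\alpha\sigma$; for instance, if $\alpha = \psi$ was obtained by (MP) from $\phi \rightarrow \psi$ and $\phi$, then $(\phi \rightarrow \psi)\sigma = \phi\sigma \rightarrow \psi\sigma$, and another application of (MP) gives $\vdash \psi\sigma = \alpha\sigma$. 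The rule (IR) deserves a separate sentence because it has a theoremhood side condition: if $\alpha = \phi \rightarrow (I\psi \rightarrow I\phi)$ was obtained by (IR) from the theorem $\vdash \phi \rightarrow \psi$, then by the inductive hypothesis $\vdash (\phi \rightarrow \psi)\sigma = \phi\sigma \rightarrow \psi\sigma$ is also a theorem, so (IR) applies once more and produces $\vdash \phi\sigma \rightarrow (I(\psi\sigma) \rightarrow I(\phi\sigma)) = \alpha\sigma$.

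The only genuinely delicate point — and thus the main thing to check — is the side condition on (IR): it requires a theorem as premise, not merely a formula derived from assumptions. This causes no trouble here because (US) is stated for theorems $\vdash \alpha$, so along the whole derivation each intermediate formula $\gamma_i$ is itself a theorem and the inductive hypothesis delivers $\vdash \gamma_i\sigma$ as a theorem too, keeping the side condition intact. Everything else is mechanical, so no routine verification needs to be spelled out.
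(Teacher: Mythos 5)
Your proof is correct and is exactly the routine argument the paper has in mind: the paper omits the proof entirely, describing it only as ``a straightforward induction on the length of the formulas.'' If anything, your version is slightly more careful than that one-line remark, since the induction for $(US)$ is properly on the length of the \emph{derivation} rather than of the formula (the formula induction only enters via the preliminary fact that substitution commutes with the connectives), and you rightly single out the theoremhood side condition on $(IR)$ as the only point that is not purely mechanical.
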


The proof of this proposition is by a straightforward induction on the length of the formulas and can therefore be omitted.

To provide the proof of the Truth Lemma, we will need the following proposition, for which the proof is similar to that of Proposition 2 in \citep{Kubyshkina2019}.

\begin{prop}
\label{wedgegen}

For all $n \geq 1$:

$$\vdash (I \phi_{1} \wedge ... \wedge I \phi_{n}) \rightarrow I(\phi_{1} \vee ... \vee \phi_{n})\eqno(I\wedge^{gen})$$

\end{prop}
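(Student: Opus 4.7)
The plan is to prove $(I\wedge^{gen})$ by induction on $n$, using the axiom $(I\wedge)$ as the engine of the inductive step and the deduction theorem (Proposition~\ref{dedt}) to handle the hypothetical reasoning cleanly. Throughout, I read $\phi_{1} \vee \ldots \vee \phi_{n+1}$ as $(\phi_{1} \vee \ldots \vee \phi_{n}) \vee \phi_{n+1}$ under the standard associative convention, so that $(I\wedge)$ applies directly.

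For the base case $n=1$, the target formula is $I\phi_{1} \rightarrow I\phi_{1}$, which is an instance of (t1). For $n=2$, the target is exactly the axiom $(I\wedge)$. This disposes of the initial cases.

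For the inductive step, assume the induction hypothesis $\vdash (I\phi_{1} \wedge \ldots \wedge I\phi_{n}) \rightarrow I(\phi_{1} \vee \ldots \vee \phi_{n})$. I want $\vdash (I\phi_{1} \wedge \ldots \wedge I\phi_{n+1}) \rightarrow I(\phi_{1} \vee \ldots \vee \phi_{n+1})$. By $(DT)$ it suffices to derive $I(\phi_{1} \vee \ldots \vee \phi_{n+1})$ from the assumption $\Gamma := \{I\phi_{1} \wedge \ldots \wedge I\phi_{n+1}\}$. From $\Gamma$, repeated applications of (A1), (A2), and (MP) yield both $I\phi_{1} \wedge \ldots \wedge I\phi_{n}$ and $I\phi_{n+1}$. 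The induction hypothesis together with (MP) gives $I(\phi_{1} \vee \ldots \vee \phi_{n})$, and then (Adj) produces $I(\phi_{1} \vee \ldots \vee \phi_{n}) \wedge I\phi_{n+1}$. Finally, the instance of $(I\wedge)$ with $\phi := \phi_{1} \vee \ldots \vee \phi_{n}$ and $\psi := \phi_{n+1}$ together with (MP) delivers $I((\phi_{1} \vee \ldots \vee \phi_{n}) \vee \phi_{n+1})$, which under the associative convention is the desired $I(\phi_{1} \vee \ldots \vee \phi_{n+1})$. Applying $(DT)$ closes the induction.

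There is no real obstacle here, but two small bookkeeping points deserve attention. First, the reasoning uses only the axiom $(I\wedge)$ and purely propositional manipulations (conjunction elimination, adjunction, modus ponens), so it goes through in \textbf{LEI} exactly as it would in \textbf{ELI}; the extra flexibility of incomplete worlds plays no role at the syntactic level. Second, if one wishes to avoid the implicit appeal to associativity of $\vee$, one can either fix the bracketing of $\phi_{1} \vee \ldots \vee \phi_{n+1}$ as left-associated from the start, or else insert one extra step deriving the relevant associativity equivalence from (A4)--(A7), (A1)--(A2), and (Trans) before applying $(I\wedge)$. Either way the argument is routine, which is why this proposition can be proved along the same lines as Proposition~2 of \citep{Kubyshkina2019}, as indicated in the statement.
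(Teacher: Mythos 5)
Your proof is correct and follows essentially the route the paper intends: the paper simply defers to Proposition~2 of \citet{Kubyshkina2019}, which is the same induction on $n$ with the base case given by ($I\wedge$) and the inductive step driven by one further instance of ($I\wedge$) together with conjunction elimination, (Adj), (MP), and the deduction theorem. The only point worth a caveat is your closing aside about re-deriving associativity of $\vee$ inside the scope of $I$ --- that would actually need (\textit{fact}) and (IR) rather than just the propositional axioms --- but since you adopt the left-association convention from the start, nothing in your argument depends on it.
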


The notion of a \textit{\textbf{LEI}-theory} $\mathcal{T}$ can be defined in a standard way as a non-trivial set\footnote{A set of formulas $\mathcal{T}$ is called \textit{trivial} if every $\phi \in \mathcal{T}$. Otherwise, it is called non-trivial.} of \textbf{LEI} formulas, closed under the principles of \textbf{LEI}, and satisfying the following property: if $\phi \in \mathcal{T}$ or $\psi \in \mathcal{T}$, then $\phi \vee \psi \in \mathcal{T}$. A theory is \textit{prime} if it satisfies the property: if $\phi \vee \psi \in \mathcal{T}$, then $\phi \in \mathcal{T}$ or $\psi \in \mathcal{T}$. A theory is \textit{consistent} if for no formula $\phi$, both $\phi \in \mathcal{T}$ and $\neg \phi \in \mathcal{T}$. A theory is \textit{maximal consistent} if it is consistent and contains as many formulas as it can without becoming inconsistent. Similarly to \citep{Dunn2000} we first introduce the Extension Lemma for proving the completeness result.

\begin{lemma}[Extension Lemma]
\label{Ext}

Let $\phi \not \vdash \psi$, then there exists a consistent prime \textbf{LEI}-theory $\mathcal{T}$ such that $\phi \in \mathcal{T}$ and $\psi \not \in \mathcal{T}$.

\end{lemma}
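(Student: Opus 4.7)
The plan is a Lindenbaum-style maximization in the spirit of \citep{Dunn2000}: enlarge the deductive closure of $\{\phi\}$ into a prime theory while preserving the exclusion of $\psi$. The whole construction is driven by one bookkeeping idea, namely, that at every stage a formula is added only if doing so does not place $\psi$ in the current theory.

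First, I would fix an enumeration $\chi_1, \chi_2, \ldots$ of $\mathcal{L}$ and set $\mathcal{T}_0 = \{\alpha : \phi \vdash \alpha\}$; by hypothesis $\psi \notin \mathcal{T}_0$. For $n \geq 1$, put
\[
\mathcal{T}_n =
\begin{cases}
\{\alpha : \mathcal{T}_{n-1} \cup \{\chi_n\} \vdash \alpha\} & \text{if } \mathcal{T}_{n-1} \cup \{\chi_n\} \not\vdash \psi,\\
\mathcal{T}_{n-1} & \text{otherwise,}
\end{cases}
\]
and let $\mathcal{T} = \bigcup_{n \geq 0} \mathcal{T}_n$. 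The chain $(\mathcal{T}_n)$ is increasing and each $\mathcal{T}_n$ is deductively closed, so $\mathcal{T}$ is deductively closed as well. Clearly $\phi \in \mathcal{T}$; and since derivations are finite, $\mathcal{T} \vdash \psi$ would force $\mathcal{T}_n \vdash \psi$ for some $n$, contradicting the defining clause of every stage. Hence $\psi \notin \mathcal{T}$.

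Next I verify the remaining requirements for being a consistent \textbf{LEI}-theory. Consistency follows from the derived rule (ECQ) of Proposition~\ref{prop5}: if both $\alpha$ and $\neg \alpha$ were in $\mathcal{T}$, then by (Adj) also $\alpha \wedge \neg \alpha \in \mathcal{T}$, and (ECQ) would yield $\psi \in \mathcal{T}$, a contradiction. The disjunction-closure property required by the definition of a theory follows from A4, A5 together with deductive closure; non-triviality is immediate from $\psi \notin \mathcal{T}$.

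The most delicate step, and the one I expect to be the main obstacle, is primeness. Suppose for contradiction that $\alpha \vee \beta \in \mathcal{T}$ while $\alpha, \beta \notin \mathcal{T}$. Then $\alpha$ and $\beta$ must each have been rejected at their respective stages $m$ and $n$, so $\mathcal{T}_{m-1} \cup \{\alpha\} \vdash \psi$ and $\mathcal{T}_{n-1} \cup \{\beta\} \vdash \psi$. Applying the deduction theorem (Proposition~\ref{dedt}) at each stage, and using $\mathcal{T}_{m-1}, \mathcal{T}_{n-1} \subseteq \mathcal{T}$, I obtain $\mathcal{T} \vdash \alpha \rightarrow \psi$ and $\mathcal{T} \vdash \beta \rightarrow \psi$. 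Conjoining via (Adj), instantiating A6, and applying (MP) yields $\mathcal{T} \vdash (\alpha \vee \beta) \rightarrow \psi$; since $\alpha \vee \beta \in \mathcal{T}$, a final (MP) gives $\mathcal{T} \vdash \psi$, contradicting what was established above. The whole argument hinges on the availability of the deduction theorem together with the disjunctive axiom A6, which is exactly why primeness can be guaranteed by a single-formula enumeration rather than by the more elaborate pair-enumeration sometimes required in substructural settings.
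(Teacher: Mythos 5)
Your proof is correct and follows essentially the same route as the paper's: a Lindenbaum-style construction with (ECQ) for consistency, A4/A5 for disjunction closure, and the deduction theorem together with A6 for primeness. The only cosmetic difference is in the primeness step, where the paper bundles the relevant members of $\mathcal{T}$ into a finite conjunction $\tau$ and invokes the distribution axiom A7 plus (Trans), whereas you apply the deduction theorem directly against the set-context $\mathcal{T}_{m-1}$ and use A6 alone; both are legitimate given Proposition~\ref{dedt}.
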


The proof is in Appendix \ref{appLemmaExt}.

The canonical model for \textbf{LEI} is defined as follows.

\begin{defi}[Canonical model]
\label{can mod}

The canonical model $\mathcal{M}^{C}$ for \textbf{LEI} is the triple $(W^{C}, R^{C}, v^{C})$, where:

\begin{itemize}
\item $W^{C} = \{w \mid w$ is a consistent prime \textbf{LEI}-theory$\}$;
\item $R^{C}ww'$ iff for all $\phi$: if $I \phi \in w$ then $\phi \not \in w'$;
\item $v^{C}(p) = \{w \in W^{C} \mid p \in w\}$ and $v^{C}(\neg p) =  \{w \in W^{C} \mid \neg p \in w\}$.
\end{itemize}

\end{defi}

The definition of $R^{C}ww'$ is taken from \citep{Gilbert2020}.

Now we can prove the Truth Lemma.

\begin{lemma}[Truth Lemma]
\label{Truth}

For all formulas $\phi$, and all consistent prime \textbf{LEI}-theories $w$,

$$\mathcal{M}^{C}, w \models \phi \mbox{ iff } \phi \in w;$$
$$\mathcal{M}^{C}, w \models \neg \phi \mbox{ iff } \neg \phi \in w.$$

\end{lemma}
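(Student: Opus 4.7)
The plan is to induct on the complexity of $\phi$, proving the positive and negated biconditionals simultaneously. The atomic base case $\phi = p$ is immediate from the definition of $v^{C}$. For the Boolean cases I use A1, A2, A9 together with (Adj) for conjunction and its negation; A10 for double negation; and A13 together with A12 and the primeness of $w$ for the positive clause of $\rightarrow$ (if $\phi \in w$ then the hypothesis forces $\psi \in w$, and otherwise primeness on $\phi \vee (\phi \rightarrow \psi)$ already places $\phi \rightarrow \psi$ in $w$). For the negated clause of $\rightarrow$ I invoke A14: if $\phi \in w$ and $\psi \notin w$, then from $\phi \rightarrow (\psi \vee \neg(\phi \rightarrow \psi))$ and primeness one concludes $\neg(\phi \rightarrow \psi) \in w$, while the converse uses the consistency of $w$ together with the classical behaviour of implicative formulas.

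For the modal case, the direction $I\phi \in w \Rightarrow \mathcal{M}^{C}, w \models I\phi$ is immediate: axiom (\textit{fact}) delivers $\phi \in w$, so $w \models \phi$ by IH, and the definition of $R^{C}$ directly forces $\phi \notin w'$ (hence $w' \not\models \phi$ by IH) for every $R^{C}$-successor $w'$. The $\neg I\phi$ clauses then fall out of the $I\phi$ clauses by $(emI)$ and primeness: exactly one of $I\phi$ and $\neg I\phi$ sits in $w$, which matches the semantic dichotomy between $\mathcal{M}^{C}, w \models I\phi$ and $\mathcal{M}^{C}, w \models \neg I \phi$.

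The main obstacle is the converse direction for the modal case, $\mathcal{M}^{C}, w \models I\phi \Rightarrow I\phi \in w$, which I argue contrapositively. Suppose $I\phi \notin w$. If $\phi \notin w$ the IH gives $w \not\models \phi$ and we are done, so assume $\phi \in w$; the goal is to produce $w' \neq w$ with $R^{C} w w'$ and $\phi \in w'$. A Lindenbaum-style construction, invoking the Extension Lemma \ref{Ext} at each step, produces such a prime consistent theory $w'$ unless some finite set $\{\chi_{1}, \dots, \chi_{n}\}$ of formulas with $I\chi_{i} \in w$ already satisfies $\phi \vdash \chi_{1} \vee \dots \vee \chi_{n}$. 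In that exceptional case, $(DT)$ yields $\vdash \phi \rightarrow (\chi_{1} \vee \dots \vee \chi_{n})$; the rule (IR) then gives $\vdash \phi \rightarrow (I(\chi_{1} \vee \dots \vee \chi_{n}) \rightarrow I\phi)$; Proposition \ref{wedgegen} places $I(\chi_{1} \vee \dots \vee \chi_{n})$ in $w$ from $I\chi_{1}, \dots, I\chi_{n} \in w$; and two applications of MP with $\phi \in w$ force $I\phi \in w$, contradicting our assumption. The side condition $w' \neq w$ is automatic as soon as some $I\chi \in w$ (since (\textit{fact}) then places $\chi \in w$ while $R^{C} w w'$ forbids $\chi \in w'$); the degenerate subcase in which $w$ contains no formula of the form $I\chi$ is handled separately, by exhibiting directly a prime theory distinct from $w$ that still contains $\phi$.
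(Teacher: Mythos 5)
Your proposal is correct and follows essentially the same route as the paper's proof: the same axiom-by-axiom treatment of the Boolean and negated clauses, the same use of $(emI)$ to collapse the $\neg I\phi$ clause into the $I\phi$ clause, and the same contrapositive argument for the hard modal direction, where the obstruction $\vdash \phi \rightarrow (\chi_{1} \vee \dots \vee \chi_{n})$ is refuted via $(IR)$, Proposition \ref{wedgegen}, and $(MP)$ (the paper phrases this as consistency of $\{\phi\} \cup \{\neg\chi \mid I\chi \in w\}$ followed by Lindenbaum, which is interchangeable with your pair-extension formulation). The one step you leave unspecified --- producing a prime theory containing $\phi$ but distinct from $w$ when $w$ contains no formula of the form $I\chi$ --- is exactly where the paper needs a small extra argument, namely picking a propositional variable $p$ not occurring in $\phi$ and using $(US)$ to show that both $\{\phi \wedge p\}$ and $\{\phi \wedge \neg p\}$ are consistent, so that some witness theory must disagree with $w$ on $p$.
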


See Appendix \ref{appTruth1} for the proof. Notice that we need to distinguish the cases of $\phi$ and $\neg \phi$, because in a three-valued setting $\mathcal{M}^{C}, w \models \neg \phi \Leftrightarrow \mathcal{M}^{C}, w \not \models \phi$ does not hold.

\begin{theor}[Completeness]

The system \textbf{LEI} is sound and complete with respect to the class of all frames.

\end{theor}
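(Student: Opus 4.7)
The plan is to reduce the theorem to what the preceding lemmas have already established. Soundness is exactly Lemma \ref{sound}, so only completeness requires a new argument. I would prove completeness in the contrapositive: assuming $\not\vdash \phi$, I build a countermodel in which $\phi$ fails at some world.

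The first step is to fix a \textbf{LEI}-theorem such as $t := p \to p$, which is derivable in \textbf{Lt2} (hence in \textbf{LEI}). If $\not\vdash \phi$, then $t \not\vdash \phi$: otherwise the deduction theorem (Proposition \ref{dedt}) would yield $\vdash t \to \phi$, and modus ponens on $\vdash t$ would then give $\vdash \phi$, contradicting the assumption. This puts us into the shape required by the Extension Lemma (Lemma \ref{Ext}), which supplies a consistent prime \textbf{LEI}-theory $w^{*}$ with $t \in w^{*}$ and $\phi \notin w^{*}$.

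Next I would assemble the canonical model $\mathcal{M}^{C}$ of Definition \ref{can mod}. Before invoking the Truth Lemma, a short sanity check is needed to verify that $\mathcal{M}^{C}$ qualifies as a \textbf{LEI}-model in the sense of Definition \ref{frames}: the only nontrivial requirement is the disjointness $v^{C}(p) \cap v^{C}(\neg p) = \emptyset$, which follows directly from the consistency of each $w \in W^{C}$, since no consistent theory can contain both $p$ and $\neg p$. The Truth Lemma (Lemma \ref{Truth}) then gives $\mathcal{M}^{C}, w^{*} \not\models \phi$, because $\phi \notin w^{*}$. Hence $\phi$ is refuted on the frame underlying $\mathcal{M}^{C}$, and is therefore not valid on the class of all frames.

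There is no serious obstacle at this stage: the delicate work has already been absorbed into the preceding lemmas. The Truth Lemma, in particular, has to carry both the $\phi$- and the $\neg\phi$-recursions simultaneously, because in the three-valued semantics $\mathcal{M}, w \models \neg \phi$ is not equivalent to $\mathcal{M}, w \not\models \phi$; and it must verify the canonical clause for $I\phi$ by leveraging the axioms $(\textit{fact})$, $(I\wedge)$, $(emI)$ together with the rule $(IR)$. Once those components are in place, the present theorem is merely the standard packaging of Soundness (Lemma \ref{sound}), the Extension Lemma, and the Truth Lemma into the equivalence $\vdash \phi \Leftrightarrow \models \phi$.
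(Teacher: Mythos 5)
Your proof is correct and follows essentially the same route as the paper: completeness is obtained by combining the Extension Lemma (Lemma \ref{Ext}) and the Truth Lemma (Lemma \ref{Truth}) on the canonical model of Definition \ref{can mod}, with soundness delegated to Lemma \ref{sound}. The only difference is one of packaging --- the paper establishes the entailment form (from $\phi \not\vdash \psi$ it builds a world where $\phi$ holds and $\psi$ fails), whereas you derive the validity form ($\not\vdash \phi$ implies $\not\models \phi$) by feeding the dummy theorem $t := p \rightarrow p$ into the Extension Lemma; both are legitimate readings of the statement, and your explicit check that $v^{C}(p) \cap v^{C}(\neg p) = \emptyset$ is a sensible detail the paper leaves implicit.
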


\begin{proof}

Soundness is proven in Lemma \ref{sound}. Completeness follows in a standard way from Lemmas \ref{Ext} and \ref{Truth}. Let $\phi \not \vdash \psi$. By Lemma \ref{Ext}, we construct a consistent prime theory $\alpha$ with $\phi \in \alpha$ and $\psi \not \in \alpha$. By Lemma \ref{Truth}, we have $\mathcal{M}, \alpha \models \phi$ and $\mathcal{M}, \alpha \not \models \psi$.

\end{proof}


Now we are able to distinguish various possible situations of an agent's ignorance.
Consider, for instance, a model $\mathcal{M}$ as in Figure \ref{model M*}, i.e., $\mathcal{M} = (W, R, v)$ such that $W = \{w_{0}, w_{1}, w_{2}, w_{3}\}$, $R = \{(w_{0}, w_{1}), (w_{0}, w_{2}), (w_{0}, w_{3})\}$, $v(p) = \{w_{0}\}$, $v(\neg p) = \{w_{1}, w_{2}, w_{3}\}$, $v(q) = \{w_{0}\}$, $v(\neg q) = \{w_{2}\}$, $v(r) = \{w_{0}\}$, and $v(\neg r) = \emptyset$. In this model, an agent is ignorant in the world $w_{0}$ of  propositions $p$, $q$ and $r$: $\mathcal{M}, w_{0} \models I p \wedge I q \wedge I r$. However, the reasons for being ignorant of these propositions are different. In the case of $p$, the agent considers it to be a false proposition (i.e., for all that they consider, $\neg p$ holds). The agent is ignorant of $p$ in this case simply because they are wrong about the truth of $p$.\footnote {See  \citet{Gilbert2020} for a comparative analysis of the logic \textbf{ELI} and the logic of being wrong.} This case  corresponds to disbelieving ignorance. The situation is different in the case of ignorance of $r$. The agent is ignorant of the truth of $r$ because they consider it to be neither true nor false. This represents the case of deep ignorance. The third case, the case of ignorance of $q$, represents a `mixed situation'. The agent is ignorant of $q$ in this case because they do not consider it to be a true proposition but in some scenarios they consider it to be false, and in some other $q$ is irrelevant to them and they do not consider it at all. In all three cases, the agent is ignorant of a proposition by not considering it as true, which corresponds to the situations of excusable ignorance. Thus, the $I$ operator encodes the conditions for an agent to be excusably ignorant.

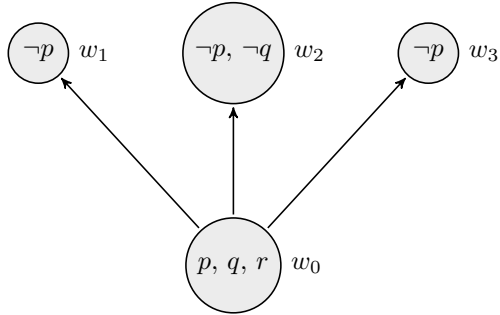
\begin{figure}
\centering
\begin{tikzpicture}[modal,world/.append style={minimum size=0.8cm}]
\node[world] (w) [label=right:$w_{0}$] {$p$, $q$, $r$};
\node[world] (s) [label=right:$w_{2}$, above=of w] {$\neg p$, $\neg q$};
\node[world] (u) [label=right:$w_{1}$, left=of s] {$\neg p$};
\node[world] (r) [label=right:$w_{3}$, right=of s] {$\neg p$};
\path[->] (w) edge (u);
\path[->] (w) edge (s);
\path[->] (w) edge (r);
\end{tikzpicture}
\caption{Various forms of ignorance}
\label{model M*}
\end{figure}

\section{Dynamic setting for \textbf{LEI}}
\label{PAL section}

The results presented in the previous section permit us to capture excusable ignorance and distinguish it from a non-excusable one. In this section, we extend the system \textbf{LEI} with public announcements to represent a possible change in the moral responsibility of an agent. First, we consider standard eliminative update procedure and explain why it cannot be applied directly to our setting. Second, we provide an alternative definition of the update procedure. On the basis of this definition, we introduce a sound and complete system. Finally, we reconsider the examples of \citet{Peels2014} in this new framework.

\subsection{Why standard \textbf{PAL} is not suitable}

The first formal settings for representing public communication were independently provided by \citet{Plaza1989} and \citet{Gerbrandy1997}. Since these pioneering works, it is usual to consider Public Announcement Logic (\textbf{PAL}) as an extension of standard modal logic obtained by adding a truthful public announcements operator $[! \,\,]$. The formulas of the form $[!\phi]\psi$ should be read as ``after every truthful announcement of $\phi$, formula $\psi$ is true.'' The semantic clause for this operator is usually defined in standard Kripke semantics, as follows (see, e.g., \cite{vanDitmarsch2008}):

\begin{defi}
\label{standard an}

Let $\mathcal{M} = (W, R, v)$ be a standard Kripke model. For any $\phi$ and $\psi$:

$\mathcal{M}, w \models [!\phi] \psi$ iff $\mathcal{M}, w \models \phi$ implies $\mathcal{M}\vert_{\phi}, w \models \psi$,

where $\mathcal{M}\vert_{\phi} = (W', R', v')$ is defined as follows:

\begin{center}

$W' = [\![ \phi ]\!]_{\mathcal{M}}$ (where $ [\![ \phi ]\!]_{\mathcal{M}} := \{w \in W \mid \mathcal{M}, w \models \phi\}$)

$R' = R \cap ( [\![ \phi ]\!]_{\mathcal{M}} \times  [\![ \phi ]\!]_{\mathcal{M}})$

$v' = v \cap  [\![ \phi ]\!]_{\mathcal{M}}.$

\end{center}

\end{defi}

With this definition, the formula $[!\phi]\psi$ is true if, and only if, whenever $\phi$ is true, $\psi$ is true after that one eliminates all the possible worlds in which $\phi$ is not true. This is why this kind of announcement is called \textit{eliminative}. 

Unfortunately, a direct application of this definition to the setting of \textbf{LEI} leads to some counterintuitive results. Let us see why. Let $\mathcal{M}_{1} = (W, R, v)$ be a \textbf{LEI}-model, such that $W = \{w_{0}, w_{1}, w_{2}\}$, $R = \{(w_{0}, w_{1}), (w_{0}, w_{2})\}$, $v(p) = \{w_{0}\}$ and $v(\neg p) = \{w_{1}\}$ (see Figure \ref{mod pal}). By definition of $I$, we have ignorance of $p$ in $w_{0}$; that is, $\mathcal{M}_{1}, w_{0} \models I p$. Assume that there is an eliminative announcement of $p$. This will lead to a model $\mathcal{M}_{1}\vert_{p} = (W', R', v')$, where $W' = \{w_{0}\}$, $R = \emptyset$, $v'(p) = \{w_{0}\}$ and $v'(\neg p) = \emptyset$ (see Figure \ref{mod pal upd}). The update by $p$ eliminates all the worlds in $\mathcal{M}_{1}$, except for the world $w_{0}$, which means that $\mathcal{M}_{1}, w_{0} \models I p \rightarrow [!p] I p$ (i.e., after a truthful announcement of $p$, the agent remains ignorant of $p$). Moreover, after the announcement of $p$, the agent starts to be ignorant of all the truths of $w_{0}$, even the ones of which they were not ignorant before; that is, $\mathcal{M}_{1}, w_{0} \models \neg I \top \rightarrow [!p] I \top$, where $\top$ stands for any tautology. Both of these situations are extremely counterintuitive and show that the eliminative announcement of Definition \ref{standard an} is not suitable to model a possible change of an agent's state after a truthful announcement is made.

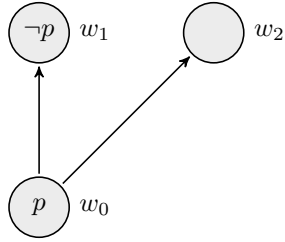
\begin{figure}
\centering
\begin{tikzpicture}[modal,world/.append style={minimum size=0.8cm}]
\node[world] (w) [label=right:$w_{0}$] {$p$};
\node[world] (u) [label=right:$w_{1}$, above=of w] {$\neg p$};
\node[world] (s) [label=right:$w_{2}$, above=of w, right=of u] {$$};
\path[->] (w) edge (u);
\path[->] (w) edge (s);
\end{tikzpicture}
\caption{Model $\mathcal{M}_{1}$}
\label{mod pal}
\end{figure}

\begin{figure}
\centering
\begin{tikzpicture}[modal,world/.append style={minimum size=0.8cm}]
\node[world] (w) [label=right:$w_{0}$] {$p$};
\end{tikzpicture}
\caption{Model $\mathcal{M}_{1}|_{p}$}
\label{mod pal upd}
\end{figure}

\begin{figure}
\centering
\begin{tikzpicture}[modal,world/.append style={minimum size=0.8cm}]
\node[world] (w) [label=right:$w_{0}$] {$p$};
\node[world] (s) [label=right:$w_{2}$, above=of w, right=of u] {$$};
\path[->] (w) edge (s);
\end{tikzpicture}
\caption{Model $\mathcal{M}_{1}|'_{p}$}
\label{mod pal st}
\end{figure}

Note also that Definition \ref{standard an} is provided for bivalent semantics, where non-truth coincides with falsity. On this basis, one may suggest to redefine  $[!\phi]\psi$ not as eliminating the worlds that do not contain $\phi$, but as eliminating the worlds that contain $\neg \phi$. In this case, the announcement of $p$ in $\mathcal{M}_{1}$ leads to a model $\mathcal{M}_{1}|'_{p}$ as in Figure \ref{mod pal st}. This strategy partially solves the problem for $\mathcal{M}_{1}$. For instance, $\mathcal{M}_{1}, w_{0} \not \models \neg I \top \rightarrow [!p] I \top$, where $\top$ stands for any tautology of \textbf{LEI}. However, we still have $\mathcal{M}_{1}, w_{0} \models I p \rightarrow [!p] I p$. Moreover, we would face the same problem in a model in which an agent is ignorant of $p$ because they consider $\neg p$ to be true in all accessible worlds. The eliminative updates seem to eliminate too much information from the initial model. Thus, our desideratum is to  define an update procedure and corresponding updated models in such a way that the propositional information of the initial model is preserved.

\subsection{Introducing the announcements}

The underlying idea of the update procedure that we introduce in this section is twofold. First, after an announcement, an agent should not start to be ignorant of truths of which they were not ignorant in the initial model.  Second, the agent should not be ignorant of the announcement once it was made, that is, the agent should take the announcement into consideration. The first condition is met by defining the updated model in such a way that it preserves all of the worlds, accessibility relations and propositional valuations of the initial model. The second condition is met by defining a new world in the updated model that includes the content of the announcement. Moreover, we do not restrict our definition to truthful announcements but to announcements that are consistent with the set of true propositions. Let us introduce this idea formally.

We define a language $\mathcal{L}^{up}$ for \textbf{LEI} with public announcements  as follows:

$$ \phi ::= p \mid  \neg \phi \mid \phi \wedge \phi \mid \phi \rightarrow \phi \mid I \phi \mid [\phi]\phi$$

This language is interpreted by Definition \ref{frames} extended with the following conditions for the update procedure.

Let us use the notation $w = \{ \chi  \mid \texttt{P} \}$ to indicate that, given a model $\mathcal{M}$, the world $w \in \mathcal{M}$ is such that exactly those formulas $\chi$ satisfying property $ \texttt{P}$ are valid in $w$; that is, $\mathcal{M}, w \models \chi$ whenever $ \texttt{P}$, and for no other formula $\chi_{1}$ does $\mathcal{M}, w \models \chi_{1}$ hold.

\begin{defi}
\label{upd-1}

Let $\mathcal{M} = (W, R, v)$ be a \textbf{LEI}-model. $Cn(\phi)$ stands for the class of all the semantic consequences of some formula $\phi \in \mathcal{L}^{up}$. Then,

\begin{itemize}

\item $\mathcal{M}, w \models [\phi] \psi$ iff consistency\footnote{Consistency here is understood in the following sense: for no formula $\psi$ in a set $A$, both $\psi \in A$ and $\neg \psi \in A$.} of $Cn(\phi) \cup \{\chi \mid \mathcal{M}, w \models \chi\}$ implies that $\mathcal{M} \vert^{w}_{\phi} , w \models \psi$, 

where $\mathcal{M} \vert^{w}_{\phi} = (W', R', v')$ is defined as follows:

$W' = W \cup \{w^{w}_{\phi}\}$ such that $w^{w}_{\phi} = \{Cn(\phi) \cup \chi \mid  \mathcal{M}, w \models \neg I \chi \wedge \chi\}$

each $R' = R \cup \{(w, w^{w}_{\phi})\} \cup\{(w^{w}_{\phi}, w') \mid w' \in W$ and $(w, w') \in R\}$

$v' = v \cup v^{*}$, such that $v^{*}(p) = \{w' \in W' | p \in w'\}$ and $v^{*}(\neg p) =  \{w' \in W' | \neg p \in w'\}$.

\item $\mathcal{M}, w \models \neg [\phi] \psi$ iff $\mathcal{M}, w \not \models [\phi] \psi$.

\end{itemize}

\end{defi}

The first interesting feature of this definition is that $\mathcal{M}, w \models \neg [\phi] \neg \psi$ does not require the truth of $\phi$ in $w$, as is the case for $\mathcal{M}, w \models \neg [!\phi] \neg \psi$ if one uses Definition \ref{standard an}. This choice is due to the fact that we do not aim to restrict announcements to only the true ones. In our setting, a non-true proposition can be announced if it does not contradict the truths of the world in which the announcement takes place.  


%
%
%
%
%
%
%
%




The second original feature of Definition \ref{upd-1}  is that the updated model is always updated with respect to some world. Consider an updated model $\mathcal{M}\vert^{w}_{\phi}$. The basic idea of its construction is that it contains all of the worlds and accessibility relations of the model $\mathcal{M}$, and, additionally  a world $w^{w}_{\phi}$ that contains all of the consequences of $\phi$ and all of the truths of which an agent is not ignorant in a given world $w$. By adding the accessibility relation from $w$ to $w^{w}_{\phi}$ we secure that, after an announcement is made, the agent is not ignorant of the content of this announcement. By adding the accessibility relations from $w^{w}_{\phi}$ to all worlds that are accessible from $w$, we secure that for all $\chi$ s.t. $\mathcal{M}^{w}_{\phi}, w^{w}_{\phi} \models \chi$ that are true and of which the agent is not ignorant in $w$ in the initial model, we have  $\mathcal{M}^{w}_{\phi}, w^{w}_{\phi} \models \neg I \chi$.

Let us give some examples of updated models. 

\begin{example}

Consider a model $\mathcal{M}_{2} = (W, R, v)$ defined as in Figure \ref{model 2}. In this model,  in $w_{0}$, an agent is ignorant of $p$ and they are not ignorant of a true proposition $q$; that is, $\mathcal{M}_{2}, w_{0} \models I p$ and $\mathcal{M}_{2}, w_{0} \models q \wedge \neg I q$. Assume that an announcement of $p$ is made with respect to the world $w_{0}$. The model updated by $p$ with respect to $w_{0}$ will contain all of the worlds and accessibilities of $\mathcal{M}_{2}$, plus a new world $w^{w_{0}}_{p}$ (see Figure \ref{model 2 upd}). The world $w^{w_{0}}_{p}$ contains the content of the announcement (i.e., it contains $p$), as well as all the true formulas of which the agent is not ignorant in $w_{0}$ of $\mathcal{M}_{2}$ (i.e., $w^{w_{0}}_{p}$ contains $q$).  It is clear that in the updated model we have $\mathcal{M}_{2}\vert^{w_{0}}_{p}, w_{0} \models \neg I p \wedge \neg I q$; that is, the agent is not ignorant of $p$ anymore, and they preserved their non-ignorance of $q$.

\end{example}

\begin{figure}
\centering
\begin{tikzpicture}[modal,world/.append style={minimum size=0.8cm}]
\node[world] (w) [label=right:$w_{0}$] {$p$, $q$};
\node[world] (s) [label=right:$w_{2}$, above=of w] {$\neg p$, $q$};
\node[world] (u) [label=right:$w_{1}$, left=of s] {\phantom{$\neg p$}};
\node[world] (r) [label=right:$w_{3}$, right=of s] {$\neg p$};
\path[->] (w) edge (u);
\path[->] (w) edge (s);
\path[->] (w) edge (r);
\end{tikzpicture}
\caption{Model $\mathcal{M}_{2}$}
\label{model 2}
\end{figure}
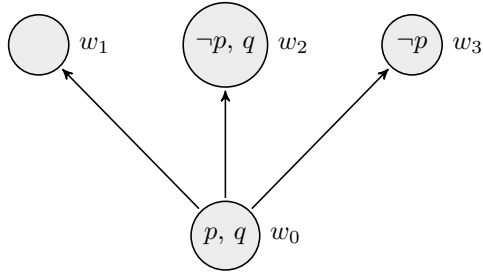

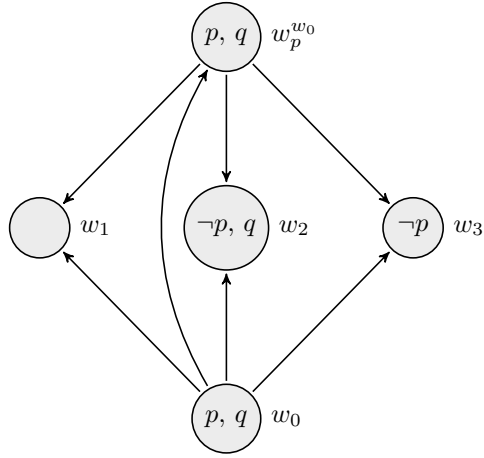
\begin{figure}
\centering
\begin{tikzpicture}[modal,world/.append style={minimum size=0.8cm}]
\node[world] (w) [label=right:$w_{0}$] {$p$, $q$};
\node[world] (s) [label=right:$w_{2}$, above=of w] {$\neg p$, $q$};
\node[world] (u) [label=right:$w_{1}$, left=of s] {\phantom{$\neg p$}};
\node[world] (r) [label=right:$w_{3}$, right=of s] {$\neg p$};
\node[world] (t) [label=right:$w^{w_{0}}_{p}$, above=of s] {$p$, $q$};
\path[->] (w) edge (u);
\path[->] (w) edge (s);
\path[->] (w) edge (r);
\path[->] (w) edge[bend left=30] (t);
\path[->] (t) edge (s);
\path[->] (t) edge (u);
\path[->] (t) edge (r);
\end{tikzpicture}
\caption{Model $\mathcal{M}_{2}|^{w_{0}}_{p}$}
\label{model 2 upd}
\end{figure}

The next two examples aim at showing how announcements of modal formulas function in our setting.

\begin{example}

Consider a model $\mathcal{M}_{2} = (W, R, v)$ defined as in Figure \ref{model 2}. Assume that an announcement of $I p$ is made with respect to the world $w_{0}$, that is ignorance of $p$ is announced to the agent. The model updated by $I p$ with respect to $w_{0}$ will in fact be the same as in Figure \ref{model 2 upd}, except we change the indexes ``$p$'' to ``$Ip$.''  One can see that $\mathcal{M}_{2}\vert^{w_{0}}_{Ip}, w^{w_{0}}_{Ip} \models Ip$, which in turn means that $\mathcal{M}_{2}\vert^{w_{0}}_{Ip}, w^{w_{0}}_{Ip} \models p$ by factivity of $I$. Moreover, $\mathcal{M}_{2}\vert^{w_{0}}_{Ip}, w_{0} \models \neg Ip$, which shows that an agent whose ignorance was announced is not ignorant anymore.

\end{example}

\begin{example}

Consider a model $\mathcal{M}_{2} = (W, R, v)$ defined as in Figure \ref{model 2}. Assume that an announcement of $\neg I q$ is made with respect to the world $w_{0}$. The updated model will be as in Figure \ref{model 2 upd 2}. Clearly, $\mathcal{M}_{2}\vert^{w_{0}}_{\neg I q}, w_{0} \models \neg Iq$, as it was the case in $w_{0}$, that is announcing that one is non-ignorant does not change the agent's state. Moreover, this example clarifies why the created world $w^{w_{0}}_{\neg I q}$ needs to access the worlds accessible from $w_{0}$. This permits to state $\mathcal{M}_{2}\vert^{w_{0}}_{\neg I q}, w^{w_{0}}_{\neg Iq} \models \neg I q$. If $w^{w_{0}}_{\neg Iq}$ were not to access $w_{2}$, the world  $w^{w_{0}}_{\neg Iq}$ could not validate $\neg I q$.

\end{example}

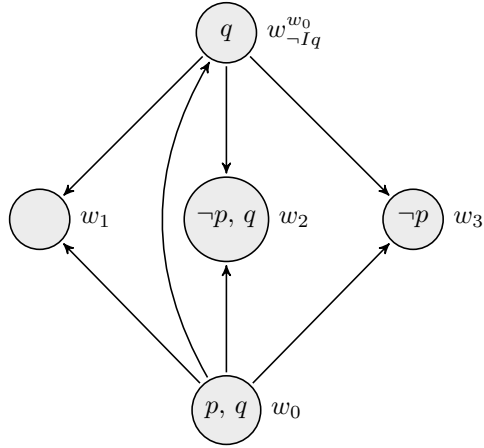
\begin{figure}
\centering
\begin{tikzpicture}[modal,world/.append style={minimum size=0.8cm}]
\node[world] (w) [label=right:$w_{0}$] {$p$, $q$};
\node[world] (s) [label=right:$w_{2}$, above=of w] {$\neg p$, $q$};
\node[world] (u) [label=right:$w_{1}$, left=of s] {\phantom{$\neg p$}};
\node[world] (r) [label=right:$w_{3}$, right=of s] {$\neg p$};
\node[world] (t) [label=right:$w^{w_{0}}_{\neg I q}$, above=of s] {$q$};
\path[->] (w) edge (u);
\path[->] (w) edge (s);
\path[->] (w) edge (r);
\path[->] (w) edge[bend left=30] (t);
\path[->] (t) edge (s);
\path[->] (t) edge (u);
\path[->] (t) edge (r);
\end{tikzpicture}
\caption{Model $\mathcal{M}_{2}|^{w_{0}}_{\neg I q}$}
\label{model 2 upd 2}
\end{figure}

%
%
%
%
%
%

Now, we can introduce the system \textbf{LEI$^{up}$}, which is characterized by the semantics described above.

\begin{defi}
\label{FILup}

\

\begin{itemize}

\item All of the axiom schemes and rules of inference of Definition \ref{FILdefi};

\item The axiom schemes:

\begin{itemize}

\item[$(AI)$] $(\phi \wedge I \neg \psi \wedge [\phi] I \psi) \rightarrow I (\psi \vee \neg \psi)$




\item[$(dA\rightarrow)$] $[\phi](\psi \rightarrow \chi) \leftrightarrow ([\phi]\psi \rightarrow [\phi]\chi)$

\item[$(nI)$] $[\phi]\neg I \phi$

\item[$(nA1)$] $\neg [\phi]\psi \rightarrow [\phi](\psi \rightarrow (p \wedge \neg p))$

\item[$(nA2)$] $\neg \phi \rightarrow [\phi] (p \wedge \neg p)$

\item[$(dA\vee)$] $[\phi](\psi \vee \chi) \leftrightarrow ([\phi]\psi \vee [\phi]\chi)$

\item[$(emA)$] $[\phi]\psi \vee \neg [\phi] \psi$

\item[$(INV)$] $(p \rightarrow [\phi]p) \wedge (\neg p \rightarrow [\phi] \neg p)$

\item[$(nAp1)$] $\neg [\phi] \neg p \rightarrow (p \vee [\phi] (p \rightarrow (q \wedge \neg q)))$

\item[$(nAp2)$] $\neg [\phi] p \rightarrow (\neg p \vee [\phi] (\neg p \rightarrow (q \wedge \neg q)))$

\item[$(uA)$] $([\phi] \psi \wedge [\phi] \neg \psi) \rightarrow (\phi \rightarrow (p \wedge \neg p))$


\end{itemize}

\item The rules: 

\begin{itemize}

\item[$(nec)$] $\mbox{From }   \vdash \phi \mbox{ infer }  \vdash  [\psi] \phi$

\item[$(intA1)$] $\mbox{From } \vdash \phi \rightarrow \psi \mbox{ infer } 
\vdash [\phi] \neg I \psi $

\item[$(intA2)$] $\mbox{From } \vdash (\phi \wedge \psi) \rightarrow \chi \mbox{ infer } 
\vdash (\psi \wedge \neg I \psi) \rightarrow [\phi] \neg I \chi $

\item[$(CN)$] $\mbox{From } \Gamma \vdash I \psi \wedge \neg [\phi] I \psi \mbox{ infer } \Gamma' \vdash \phi \rightarrow \psi$, where $\Gamma' = \{\chi \mid \chi \wedge \neg I \chi \in \Gamma \}$.



\end{itemize}

\end{itemize}

\end{defi}

Some clarifications on the meaning of the rule ($CN$) are in order. Given a context $\Gamma$ in which $\psi$ is ignored, the announcement of $\phi$ that eliminates this ignorance allows us to conclude that $\psi$ is a logical consequence of $\phi$ in the context of all the non-ignored truths of $\Gamma$ (denoted $\Gamma'$). In other words, if an announcement removes an agent's ignorance, then the proposition that the agent was initially ignorant of must be a logical consequence of the announcement combined with all truths that were not ignored in the original context.

The soundness of  \textbf{LEI$^{up}$} is proven in a standard way.

\begin{theor}[Soundness]
\label{SoundUP}

The system \textbf{LEI$^{up}$} is sound with respect to the class of all frames.

\end{theor}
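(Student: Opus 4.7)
I would prove this by induction on the length of derivations in \textbf{LEI$^{up}$}. The axioms and rules inherited from \textbf{LEI} (Definition \ref{FILdefi}) remain sound under the extended semantics of Definition \ref{upd-1}, because those formulas do not mention the announcement operator and their truth conditions at any world of a \textbf{LEI$^{up}$}-model coincide with those of Definition \ref{frames}, so Lemma \ref{sound} applies directly. The bulk of the work therefore lies in verifying the new schemata and rules.

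I would group the new axioms as follows. The invariance axiom $(INV)$ and the non-ignorance axiom $(nI)$ follow immediately from the construction of $\mathcal{M}\vert^{w}_{\phi}$: since $v' = v \cup v^{*}$ extends $v$ rather than replacing it, atomic truth values at worlds of the original model are preserved, and since $w^{w}_{\phi}$ contains $\phi$ and is accessible from $w$, the defining clause of $I\phi$ at $w$ in the updated model fails. The distribution schemata $(dA\rightarrow)$ and $(dA\vee)$ and the bivalence schema $(emA)$ follow from the observation that the defining biconditional of $[\phi]\psi$ in Definition \ref{upd-1} is a classical, two-valued clause. The failed-announcement schemata $(nA1)$, $(nA2)$, $(nAp1)$, $(nAp2)$, and $(uA)$ are then handled uniformly: whenever the consistency precondition of an announcement fails — in particular when $\phi$ is false at $w$, or when the outcomes $\psi$ and $\neg \psi$ cannot coexist after the update — the announcement clause becomes vacuously true and the required disjuncts follow.

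The most delicate axiom is $(AI)$, whose verification uses that $I\neg\psi$ at $w$ prevents $\psi$ from coexisting with the old truths at $w$ through the consistency check of $[\phi]I\psi$, so the latter must hold vacuously, and $I(\psi \vee \neg\psi)$ is then recovered from the pattern of ignorance inherited from $I\neg\psi$. Among the new rules, $(nec)$ is standard (validity on every frame transfers to every updated model). The rules $(intA1)$ and $(intA2)$ are established by examining the explicit contents of $w^{w}_{\phi}$: since $Cn(\phi) \subseteq w^{w}_{\phi}$, any semantic consequence of $\phi$ (together with the recorded non-ignored truths at $w$, in the case of $(intA2)$) is already satisfied at $w^{w}_{\phi}$, which suffices to block $I\chi$ at $w$ in the updated model.

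The main obstacle I anticipate is the rule $(CN)$. Its semantic counterpart reads: if $I\psi \wedge \neg[\phi]I\psi$ is a semantic consequence of $\Gamma$, then $\phi \rightarrow \psi$ is a semantic consequence of $\Gamma' = \{\chi \mid \chi \wedge \neg I\chi \in \Gamma\}$. The crucial observation is that, at any pointed model in which $I\psi$ holds at $w$ but $[\phi]I\psi$ fails non-vacuously, the only new $R'$-successor of $w$ in $\mathcal{M}\vert^{w}_{\phi}$ is $w^{w}_{\phi}$, which must therefore satisfy $\psi$; and by construction $w^{w}_{\phi}$ is precisely the world generated by $Cn(\phi)$ together with the non-ignored truths at $w$. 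Turning this construction into a genuine semantic-consequence statement about $\Gamma'$ is where the bulk of the technical effort lies, and I expect it to require an auxiliary lemma characterising the formulas satisfied at $w^{w}_{\phi}$ in terms of semantic consequence from $\phi$ together with $\{\chi \mid \mathcal{M}, w \models \chi \wedge \neg I\chi\}$.
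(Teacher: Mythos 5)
Your overall strategy (case-by-case verification of each new axiom scheme and rule against Definition \ref{upd-1}, with the \textbf{LEI} part discharged by Lemma \ref{sound}) is the same as the paper's, and several of your cases are on track --- in particular your identification of $(CN)$ as the hard rule and your key observation that $w^{w}_{\phi}$ is the only new successor of $w$ and is generated by $Cn(\phi)$ together with the non-ignored truths at $w$. However, your treatment of $(AI)$ contains a genuine error. The antecedent of $(AI)$ includes $\phi$ itself, so $Cn(\phi)\cup\{\chi\mid\mathcal{M},w\models\chi\}$ consists entirely of formulas true at $w$ and is therefore consistent: the consistency precondition of $[\phi]I\psi$ \emph{passes}, the update is actually carried out, and $[\phi]I\psi$ is satisfied non-vacuously. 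Your claim that $I\neg\psi$ makes the consistency check fail confuses that check (which concerns $\phi$ and the truths at $w$) with the status of $\psi$. Worse, the conclusion $I(\psi\vee\neg\psi)$ cannot be ``recovered from the pattern of ignorance inherited from $I\neg\psi$'' alone: in the three-valued semantics, $\mathcal{M},w\models I(\psi\vee\neg\psi)$ requires that \emph{both} $\psi$ and $\neg\psi$ fail at every accessible $w'\neq w$, and $I\neg\psi$ supplies only the failure of $\neg\psi$ there. The failure of $\psi$ at those worlds is exactly what the non-vacuous satisfaction of $[\phi]I\psi$ delivers: $\mathcal{M}\vert^{w}_{\phi},w\models I\psi$ forces $\psi$ to fail at every successor of $w$ in the updated model, hence (since the original worlds and edges are retained) at every original $R$-successor of $w$. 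Declaring the announcement vacuous discards precisely this conjunct and leaves the axiom underivable from what remains.

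A secondary, smaller problem is your ``uniform'' handling of $(nA1)$, $(nAp1)$ and $(nAp2)$ via vacuity. By Definition \ref{upd-1}, $\mathcal{M},w\models\neg[\phi]\psi$ means that the consistency precondition \emph{holds} and $\psi$ fails at $w$ in $\mathcal{M}\vert^{w}_{\phi}$; the consequent $[\phi](\psi\rightarrow(p\wedge\neg p))$ is then true not vacuously but because the paper's implication is true at a world whenever its antecedent is not true there. Only $(nA2)$, and the inconsistency branch of $(uA)$, genuinely turn on the precondition failing. These cases are routine once set up correctly, but the recipe you state would stall on $(nA1)$, $(nAp1)$ and $(nAp2)$.
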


The proof can be found in Appendix \ref{appSound}.

The following principle is derivable in a standard way from $(dA\rightarrow)$:

\begin{prop}
\label{prop10}

The following is a theorem of \textbf{LEI$^{up}$}:

\begin{itemize}

\item[$(dA\wedge)$] $ [\phi](\psi \wedge \chi) \leftrightarrow ([\phi]\psi \wedge [\phi]\chi)$

\end{itemize}

\end{prop}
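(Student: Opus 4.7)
The plan is to derive $(dA\wedge)$ from $(dA\rightarrow)$ in the usual Kripke-style way, by obtaining each direction of the biconditional separately and then conjoining them via $(Adj)$. The key observation is that conjunction can be pushed inside and pulled outside of $[\phi]$ by exploiting only (i) normality of $[\phi]$ with respect to implication, given exactly by $(dA\rightarrow)$, (ii) the necessitation rule $(nec)$, and (iii) the propositional apparatus of \textbf{Lt2}, in particular the deduction theorem (Proposition \ref{dedt}).

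For the forward direction $[\phi](\psi \wedge \chi) \rightarrow ([\phi]\psi \wedge [\phi]\chi)$, I would start from axioms A1 and A2, i.e.\ $(\psi \wedge \chi) \rightarrow \psi$ and $(\psi \wedge \chi) \rightarrow \chi$. Applying $(nec)$ to each yields $[\phi]((\psi \wedge \chi) \rightarrow \psi)$ and $[\phi]((\psi \wedge \chi) \rightarrow \chi)$, and then $(dA\rightarrow)$ (together with $(MP)$ on the left-to-right direction of the biconditional) gives $[\phi](\psi \wedge \chi) \rightarrow [\phi]\psi$ and $[\phi](\psi \wedge \chi) \rightarrow [\phi]\chi$. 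Combining these with $(Adj)$ and axiom A3 yields $[\phi](\psi \wedge \chi) \rightarrow ([\phi]\psi \wedge [\phi]\chi)$.

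For the converse $([\phi]\psi \wedge [\phi]\chi) \rightarrow [\phi](\psi \wedge \chi)$, I would first derive the propositional theorem $\psi \rightarrow (\chi \rightarrow (\psi \wedge \chi))$: since $(Adj)$ gives $\psi, \chi \vdash \psi \wedge \chi$, two applications of the deduction theorem (Proposition \ref{dedt}) deliver the formula. Applying $(nec)$ yields $[\phi](\psi \rightarrow (\chi \rightarrow (\psi \wedge \chi)))$. Now use $(dA\rightarrow)$ once to extract $[\phi]\psi \rightarrow [\phi](\chi \rightarrow (\psi \wedge \chi))$, and a second time (plus transitivity of $\rightarrow$, which is derivable in \textbf{Lt2}) to obtain $[\phi]\psi \rightarrow ([\phi]\chi \rightarrow [\phi](\psi \wedge \chi))$. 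Finally, (R1) from Proposition \ref{prop5} rewrites this as $([\phi]\psi \wedge [\phi]\chi) \rightarrow [\phi](\psi \wedge \chi)$.

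Conjoining the two implications via $(Adj)$ and unfolding $\leftrightarrow$ by its definition yields $(dA\wedge)$. I do not expect any genuine obstacle: the only point where one needs to be a bit careful is that \textbf{Lt2} is not classical, so I would not silently invoke contrapositives or classical reductions; everything above, however, stays within the positive fragment plus what is already licensed by Propositions \ref{dedt} and \ref{prop5}, so the derivation goes through unchanged.
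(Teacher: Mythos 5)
Your proposal is correct and follows essentially the same derivation as the paper's own proof: the forward direction via A1, A2, $(nec)$, $(dA\rightarrow)$, and A3, and the converse via $\psi \rightarrow (\chi \rightarrow (\psi \wedge \chi))$ (obtained from $(Adj)$ and the deduction theorem), $(nec)$, two applications of $(dA\rightarrow)$ with $(Trans)$, and finally (R1). No differences worth noting.
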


The proof can be found in Appendix \ref{appProp10}.

The following proposition is a generalization of $(intA2)$. The proof is in the Appendix \ref{appintA2gen}.

\begin{prop}
\label{intA2gen}

\

\begin{itemize}

\item[$(intA2)^{gen}$] From $(\phi \wedge \psi_{1} \wedge  ... \wedge \psi_{n}) \rightarrow \chi$ infer 

$(\psi_{1} \wedge \neg I \psi_{1} \wedge ... \wedge \psi_{n} \wedge \neg I \psi_{n}) \rightarrow [\phi] \neg I \chi$.

\end{itemize}

\end{prop}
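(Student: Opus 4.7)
My plan is to argue by induction on $n \geq 1$. The base case $n = 1$ is exactly the rule $(intA2)$ of Definition \ref{FILup}, so nothing more is required there. The inductive step is where the real work lies, and I will set it up by peeling off the last conjunct $\psi_{n+1}$ via the deduction theorem.

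Assume the statement holds for $n$, and let $\vdash (\phi \wedge \psi_{1} \wedge \ldots \wedge \psi_{n+1}) \rightarrow \chi$. By Proposition \ref{dedt}, this is equivalent to $\vdash (\phi \wedge \psi_{1} \wedge \ldots \wedge \psi_{n}) \rightarrow (\psi_{n+1} \rightarrow \chi)$, to which I apply the induction hypothesis with conclusion $(\psi_{n+1} \rightarrow \chi)$ in place of $\chi$, obtaining
$$
(\psi_{1} \wedge \neg I \psi_{1} \wedge \ldots \wedge \psi_{n} \wedge \neg I \psi_{n}) \rightarrow [\phi] \neg I (\psi_{n+1} \rightarrow \chi).
$$
In parallel, $(intA2)$ applied to the trivially derivable $\vdash (\phi \wedge \psi_{n+1}) \rightarrow \psi_{n+1}$ yields $(\psi_{n+1} \wedge \neg I \psi_{n+1}) \rightarrow [\phi] \neg I \psi_{n+1}$. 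So under the full antecedent $\bigwedge_{i=1}^{n+1}(\psi_{i} \wedge \neg I \psi_{i})$ I can derive both $[\phi] \neg I (\psi_{n+1} \rightarrow \chi)$ and $[\phi] \neg I \psi_{n+1}$.

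The hard part is the final bridge: combining these two to yield $[\phi] \neg I \chi$. Semantically this is transparent: by Definition \ref{upd-1}, the world $w^{w}_{\phi}$ created by the update contains $Cn(\phi)$ together with every $\chi'$ such that $\chi' \wedge \neg I \chi'$ holds at $w$, so under the full antecedent $w^{w}_{\phi}$ carries $\phi$ and all of $\psi_{1},\ldots,\psi_{n+1}$, and by the original hypothesis therefore also carries $\chi$, giving $\neg I \chi$ at $w$ in $\mathcal{M}\vert^{w}_{\phi}$. Proof-theoretically, I plan to reproduce this reasoning by using $(dA\rightarrow)$ to distribute the announcement across the conditional $\psi_{n+1} \rightarrow \chi$ and then pair the resulting formula with $[\phi] \neg I \psi_{n+1}$ to extract $[\phi] \neg I \chi$, supported by $(intA1)$ for the side obligations. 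This bridge is the step that demands the most care, since it must faithfully encode the model-theoretic role of the newly added world $w^{w}_{\phi}$ inside the axiomatic system; if one instead invokes completeness of $\mathbf{LEI^{up}}$ (established separately), the entire derived rule could equivalently be justified by the semantic argument just sketched.
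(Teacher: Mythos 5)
Your base case matches the paper's, but your inductive step diverges from the paper's and the divergence opens a genuine gap at exactly the point you flag as "the hard part." Applying the induction hypothesis to $\vdash (\phi \wedge \psi_{1} \wedge \ldots \wedge \psi_{n}) \rightarrow (\psi_{n+1} \rightarrow \chi)$ leaves you holding $[\phi]\neg I(\psi_{n+1} \rightarrow \chi)$ and $[\phi]\neg I \psi_{n+1}$, and there is no sound way to combine these into $[\phi]\neg I\chi$. The axiom $(dA\rightarrow)$ cannot help: it distributes $[\phi]$ over an implication standing \emph{directly} under the announcement, whereas here the implication sits under the scope of $\neg I$, and the system has no axiom relating $\neg I(\alpha \rightarrow \beta)$ to $\neg I\alpha$ and $\neg I\beta$. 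Nor could it have one as a valid schema: take $w \models \beta$, $w \not\models \alpha$, with a single accessible $w'$ where both $\alpha$ and $\beta$ fail. Then $\neg I\alpha$ holds (since $\alpha$ fails at $w$) and $\neg I(\alpha\rightarrow\beta)$ holds (since $\alpha\rightarrow\beta$ holds at $w'$), yet $I\beta$ holds. The semantic argument you sketch is correct precisely because the \emph{single} new world $w^{w}_{\phi}$ witnesses all the non-ignorance claims simultaneously, but your decomposition destroys that simultaneity by splitting the conclusion into two separate $\neg I$-statements whose witnesses are not forced to coincide by any axiom. Your fallback of invoking completeness of $\mathbf{LEI^{up}}$ is circular: $(intA2)^{gen}$ is itself used in the completeness proof (in establishing that $w$ has a unique $\phi$-successor with the required inaccessibility property, Proposition \ref{a unique succ}).

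The paper avoids this trap by never changing the consequent $\chi$. It keeps the rule in the form "from $\vdash (\phi \wedge \psi_{1} \wedge \ldots \wedge \psi_{k}) \rightarrow \chi$ infer $\vdash (\psi_{1} \wedge \neg I\psi_{1} \wedge \ldots) \rightarrow [\phi]\neg I\chi$" and lifts it to contexts: it adds $\psi_{k+1}$ as a supplementary premise on the left of both turnstiles, strengthens the premise of the conclusion to $\psi_{k+1} \wedge \neg I\psi_{k+1}$, and then uses the deduction theorem and $(Adj)$ to fold the extra hypotheses back into the antecedents. If you want to repair your proof, you should restructure the induction along those lines rather than trying to manufacture a propositional bridge between $\neg I(\psi_{n+1}\rightarrow\chi)$ and $\neg I\chi$.
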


Note that, as in standard \textbf{PAL}, ($US$) does not hold in general. However, ($US$) holds for any propositional variable $p$ in a formula $\alpha$ if $p$ occurs neither in the content nor in the output of this announcement (i.e., if in a formula of the form $[\phi] \psi$, $p$ occurs in neither $\phi$ nor $\psi$), whenever $\alpha$ contains an announcement as a subformula. Formally:

\begin{prop}
\label{substup}

Let $\alpha$ be a formula whose propositional variables are included in $\{p_1,\dots, p_n\}$, such that if for all $\delta$ and $\gamma$ such that $[\delta] \gamma$ is a subformula of $\alpha$, $p_{1}, ..., p_{n}$ does not occur in $\delta$ and $\gamma$, and $\beta_1,\dots, \beta_{n}$ are any wff of \textbf{LEI$^{up}$}. Then, $\alpha[\beta_1/p_1,\dots, \beta_n/p_n]$ is the formula that results from uniformly substituting $\beta_i$ for $p_i$ in $\alpha$. 

$$\text{If } \vdash \alpha \text{ then } \vdash \alpha[\beta_1/p_1,\dots, \beta_n/p_n] \eqno(US^{up})$$
\end{prop}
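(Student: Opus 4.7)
The plan is to proceed by induction on the length of a derivation of $\vdash \alpha$ in \textbf{LEI$^{up}$}, exploiting the fact that the restriction on $p_1, \ldots, p_n$ leaves every announcement subformula $[\delta]\gamma$ of $\alpha$ untouched by the substitution. For axioms inherited from \textbf{LEI}, standard substitution (Proposition \ref{subst}) applies directly. For the announcement axioms of Definition \ref{FILup}, a case-by-case inspection shows that in $(AI)$, $(dA\rightarrow)$, $(nI)$, $(dA\vee)$, $(emA)$, $(INV)$, $(nA1)$, $(nA2)$, $(nAp1)$, $(nAp2)$, every propositional variable appearing in an instance of the scheme is trapped inside some announcement subformula of $\alpha$; the restriction therefore forces the $p_i$'s to be absent from $\alpha$ entirely, making the substitution trivial.

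The one axiom that demands real work is $(uA)$: $([\phi]\psi \wedge [\phi]\neg\psi) \rightarrow (\phi \rightarrow (p \wedge \neg p))$, where $p$ lives outside any announcement. Here I would derive the substituted instance $([\phi]\psi \wedge [\phi]\neg\psi) \rightarrow (\phi \rightarrow (\beta \wedge \neg\beta))$ by chaining the original $(uA)$ with the provable implication $(p \wedge \neg p) \rightarrow (\beta \wedge \neg\beta)$, which itself follows from (dECQ) together with the deduction theorem (Proposition \ref{dedt}).

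For the inductive step, the propositional rules (Adj), (MP), (dMP), (dTrans), (dECQ) and (IR) are routine: substitution commutes with $\wedge$, $\rightarrow$, $\neg$, and the modality $I$, and the restriction on $\alpha$ propagates to the premises so the inductive hypothesis applies. For the announcement-introducing rules, the restriction on the conclusion does the heavy lifting. For (nec) and (intA1), whose conclusions $[\psi]\phi$ and $[\phi]\neg I \psi$ are themselves the outermost announcement subformulas of themselves, the restriction forces the $p_i$'s out of both announcement arguments, trivializing the substitution. For (intA2), with conclusion $(\psi \wedge \neg I \psi) \rightarrow [\phi] \neg I \chi$, the restriction forces $p_i \notin \phi \cup \chi$ while permitting $p_i$'s in the non-announcement part of $\psi$; applying the inductive hypothesis to the premise $(\phi \wedge \psi) \rightarrow \chi$ (whose announcement subformulas are a subset of those of $\alpha$, so the restriction is inherited) yields $\vdash (\phi \wedge \psi[\vec\beta/\vec p]) \rightarrow \chi$, and a further application of (intA2) delivers $\vdash (\psi[\vec\beta/\vec p] \wedge \neg I \psi[\vec\beta/\vec p]) \rightarrow [\phi]\neg I \chi$, which is precisely $\alpha[\vec\beta/\vec p]$. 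The case of (CN) with empty context is analogous: the announcement subformula $[\phi] I \psi$ of the premise, together with the restriction inherited through the derivation, forces $p_i \notin \phi \cup \psi$, reducing the substitution on the conclusion $\phi \rightarrow \psi$ to the identity.

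The main obstacle is the explicit derivation required for the substituted instance of $(uA)$ together with the careful bookkeeping for the announcement-introducing rules, where one must verify, for each such rule, that the restriction on the conclusion either trivializes the substitution on the announcement argument or transfers cleanly to the premises so that the inductive hypothesis can be applied.
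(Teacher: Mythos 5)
Your case analysis of the axiom schemes is careful and correct; in particular you rightly single out $(uA)$ as the one announcement axiom in which a propositional variable (the $p$ of $p \wedge \neg p$) lives outside every announcement, and your repair via $(ECQ)$, the deduction theorem and $(Trans)$ to obtain $([\phi]\psi \wedge [\phi]\neg\psi) \rightarrow (\phi \rightarrow (\beta \wedge \neg\beta))$ is fine. This is already considerably more than the paper offers, since its entire proof is the remark that the claim is ``straightforward from Proposition \ref{subst}.''

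There is, however, a genuine gap in your inductive step, at exactly the point you wave through with ``the restriction on $\alpha$ propagates to the premises so the inductive hypothesis applies.'' For $(MP)$, $(dMP)$, $(dTrans)$, $(dECQ)$ and $(CN)$ the premises contain material that does not survive into the conclusion (the minor premise $\phi$ of $(MP)$, the middle formula $\psi$ of $(dTrans)$, the announcement $[\phi]I\psi$ in the premise of $(CN)$, and so on), so the hypothesis of the proposition --- which constrains only the announcement subformulas of the \emph{conclusion} $\alpha$ --- places no constraint on those premises, and the restricted inductive hypothesis cannot be invoked for them. A derivation of $\alpha$ may well pass through intermediate lines in which some $p_i$ does occur inside an announcement: for instance one can derive $p \rightarrow p$ by detouring through the $(nI)$ instance $[p]\neg I p$ and $(A12)$, and the induction on that derivation with the substitution $[\beta/p]$ gets stuck at the final $(MP)$, since nothing licenses substituting into $[p]\neg I p$. (Here the substituted premise happens to be another instance of $(nI)$, but your argument does not guarantee anything of the sort in general.) The same problem infects your $(CN)$ case: the restriction on the conclusion $\phi \rightarrow \psi$ does not force $p_i \notin \phi \cup \psi$, because $[\phi]I\psi$ is an announcement subformula of the \emph{premise}, not of the conclusion, and the phrase ``the restriction inherited through the derivation'' is doing unjustified work. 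To close the gap you would need either a stronger induction hypothesis quantifying over whole derivations, together with a proof that every theorem satisfying the restriction admits a derivation all of whose lines satisfy it, or else a semantic detour through soundness and completeness --- which is delicate here, since $(US^{up})$ is itself invoked inside the paper's completeness argument.
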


The proof is straightforward from Proposition \ref{subst}.

In standard \textbf{PAL}, it is usual to provide reduction axiom schemes for the update procedure which (in turn) permit one to reduce the completeness proof of a given dynamic system to the completeness result of its static fragment. As one can notice from Definition \ref{FILup}, we do not provide such reduction axiom schemes for  \textbf{LEI$^{up}$} and we leave the question of their existence for further investigations.  The completeness of \textbf{LEI$^{up}$} will be provided by  the method of \textit{extended models} (see \citet{Wang2013}). First, we define a class of extended models, which is equivalent to the class of \textbf{LEI}-models. Second, we provide completeness of \textbf{LEI$^{up}$} with respect to the extended models semantics, and thus for the semantics we have defined in this subsection.

\subsection{Extended models and equivalence result}
\label{extended sec}

We show how one can reformulate the semantics in Definitions \ref{frames} and \ref{upd-1} in terms of extended models, for which the update procedure is represented in terms of accessibility relations. Let us first introduce extended models, in a similar way to \citep[pp. 119--120]{Wang2013}.

\begin{defi}[Extended model]
\label{ext model def}

An extended model $\mathcal{M}$ is a tuple $(W, R, \{R^{\phi}\mid\phi \in \mathcal{L}^{up}\}, v)$ such that:

\begin{itemize}

\item $(W, R, v)$ is as in Def. \ref{frames}.

\item For each $\phi$, $R^{\phi}$ is a (possibly empty) binary relation over $W$.

We call $\mathcal{M}^{-} = (W, R, v)$ the Kripke core of $\mathcal{M}$.

\end{itemize}

\end{defi}

\begin{defi}
\label{ext model def sat}

Let $\mathcal{M} = (W, R, \{R^{\phi}\mid\phi \in \mathcal{L}^{up}\}, v)$ be an extended model.

\begin{itemize}
\item $\mathcal{M}, w \models^{+} p$ iff $w \in v(p)$;
\item $\mathcal{M}, w \models^{+} \neg p$ iff $w \in v(\neg p)$;
\item $\mathcal{M}, w \models^{+} \neg \neg \phi$ iff $\mathcal{M}, w \models^{+} \phi$;
\item $\mathcal{M}, w \models^{+} \phi \wedge \psi$ iff $\mathcal{M}, w \models^{+} \phi$ and $\mathcal{M}, w \models^{+} \psi$;
\item $\mathcal{M}, w \models^{+} \neg (\phi \wedge \psi)$ iff $\mathcal{M}, w \models^{+} \neg \phi$ or $\mathcal{M}, w \models^{+} \neg \psi$;
\item $\mathcal{M}, w \models^{+} \phi \rightarrow \psi$ iff $\mathcal{M}, w \models^{+} \phi$ implies $\mathcal{M}, w \models^{+} \psi$;
\item $\mathcal{M}, w \models^{+} \neg (\phi \rightarrow \psi)$ iff $\mathcal{M}, w \not \models^{+} \phi \rightarrow \psi$;
\item $\mathcal{M}, w \models^{+} I\phi$ iff for all $w'$ that are not $w$ and such that $Rww'$, $\mathcal{M}, w'  \not \models^{+}  \phi$ and $\mathcal{M}, w \models^{+} \phi$;
\item $\mathcal{M}, w \models^{+} \neg I\phi$ iff either there exists $w'$ that is not $w$ and such that $Rww'$, $\mathcal{M}, w' \models^{+}  \phi$, or $\mathcal{M}, w \not \models^{+} \phi$;
\item $\mathcal{M}, w \models^{+} [\phi] \psi$ iff for all $w'$ if $R^{\phi}ww'$ then $\mathcal{M}, w' \models^{+} \psi$;
\item $\mathcal{M}, w \models^{+} \neg [\phi] \psi$ iff there exists $w'$ such that $R^{\phi}ww'$ and $\mathcal{M}, w' \not \models^{+} \psi$. 


\end{itemize} 

\end{defi}

We can interpret $\mathcal{L}^{up}$ on extended models under the semantics described in Definitions \ref{frames} and \ref{upd-1} by setting $\mathcal{M}, w \models \phi \Leftrightarrow \mathcal{M}^{-}, w \models \phi$ for any pointed extended model $(\mathcal{M}, w)$ and any formula $\phi \in \mathcal{L}^{up}$. Note that it is not necessary that $\mathcal{M}, w \models \phi \Leftrightarrow \mathcal{M}, w \models^{+} \phi$ in case of $\phi$ being a formula involving public announcement. 
For instance, it is clear that if we consider $\mathcal{M} = (W, R, v)$ s.t. $W = \{w_{0}, w_{1}\}$, $R = \{ \emptyset\}$, and $v(p) = \{w_{0}\}$, we have $\mathcal{M}, w_{0} \models [p] \neg I p$, but not $\mathcal{M}, w \models^{+} [p] \neg I p$, whenever $R^{p} = \{(w_{0}, w_{1})\}$ in the extended $\mathcal{M}$. However, we will consider a class of extended models, which satisfies the properties listed below, for which the two semantics coincide (as will be shown later):

\begin{itemize}

\item[(Func)] For any formula $\phi$: if $Cn(\phi) \cup \{\chi \mid \mathcal{M}, w 
\models^{+} \chi\}$ is consistent, then $w$ has a unique $\phi$-successor. If $Cn(\phi) \cup \{\chi \mid \mathcal{M}, w \models^{+} \chi\}$ is inconsistent, then $w$ has no outgoing $\phi$-transition.

\item[(Inv-p)] If $R^{\phi}ww'$, then for all $p \in Prop$: $w \in v(p) \Leftrightarrow w' \in v(p)$.

\item[(Inv-n)] If $R^{\phi}ww'$, then for all $p \in Prop$: $w \in v(\neg p) \Leftrightarrow w' \in v(\neg p)$.

\item[(Pr1)] If $R^{\phi}ww'$, then (1) for all $w''$ if $Rww''$ then $Rw'w''$, (2) there exists $w^{*} = Cn(\phi) \cup \{\chi \mid$  $\mathcal{M}, w \models^{+} \neg I \chi \wedge \chi\}$ such that $Rw'w^{*}$ and (3) for all $w''$ if $Rww''$ then $Rw^{*}w''$.

\item[(Pr2)] If $R^{\phi}ww'$ and $Rw'w''$, then either $w'' \subseteq Cn(\phi) \cup \{\chi \mid$ $\mathcal{M}, w \models^{+} \neg I \chi \wedge \chi\}$ or $w'' \subseteq w'''$ for some $w'''$ s.t. $Rww'''$.\footnote{Similarly to $w = \{ \chi  \mid \texttt{P} \}$, let us use $w \subseteq \{ \chi  \mid \texttt{P} \}$ to indicate that, given a model $\mathcal{M}$, the world $w \in \mathcal{M}$ is such that formulas $\chi$ satisfying property $\texttt{P}$ are valid in $w$; that is, $\mathcal{M}, w \models^{+} \chi$ whenever $\texttt{P}$.}



\end{itemize}

(Func) means that the $\phi$-update for $w$ is a partial function which depends on consistency of $Cn(\phi) \cup \{\chi \mid \mathcal{M}, w 
\models^{+} \chi\}$. (Inv-p) and (Inv-n) mean that the update should not change the valuation of the states. (Pr1) means that (1) the update for $w$ preserves all the accessibility relations from $w$; (2) that the updated $w$ accesses a new world $w^{*}$; (3) that $w^{*}$ accesses all worlds accessible from $w$. (Pr2) states that after update for $w$, the updated world sees no new information except information provided in the world $w^{*} =   Cn(\phi) \cup \{\chi \mid$ $\mathcal{M}, w \models^{+} \neg I \chi \wedge \chi\}$, that is, all the worlds accessible from $w'$ are either subsets of the world $w^{*}$, or subsets of some worlds already accessible from $w$.


The next lemma provides the relationship between the updated models and the extended models.

\begin{lemma}
\label{u-R eqv}

If for all $w''$ in $\mathcal{M}$: $\mathcal{M}^{-}, w'' \models \phi \Leftrightarrow \mathcal{M}, w'' \models^{+} \phi$, and $R^{\psi}ww'$, then

\begin{center}

$\mathcal{M}^{-}\vert^{w}_{\psi}, w \models \phi$ iff $\mathcal{M},w' \models^{+} \phi$.

\end{center}

\end{lemma}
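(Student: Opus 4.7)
The plan is to proceed by induction on the complexity of $\phi$, applying the side hypothesis to each subformula and matching, clause by clause, the core-update semantics on the left of the biconditional with the extended semantics on the right, using the properties (Func), (Inv-p), (Inv-n), (Pr1), (Pr2) to bridge between the two sides.

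For the atomic bases $\phi = p$ and $\phi = \neg p$, I would first note that the new valuation $v'$ agrees with $v$ on every world of the original frame, since only the fresh world $w^{w}_{\psi}$ can contribute new atomic values; I then transfer from $w$ to $w'$ by invoking (Inv-p) or (Inv-n), which is exactly what those conditions are designed to guarantee. The propositional compound cases (conjunction, implication, double negation and their negations) reduce immediately to the inductive hypothesis applied to proper subformulas, since neither the update procedure nor the extended-model transitions perturb the Boolean clauses at a fixed evaluation point.

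The $I$ case is where the real work lies. By Definition \ref{upd-1} the $R'$-successors of $w$ in the updated core $\mathcal{M}^{-}\vert^{w}_{\psi}$ are precisely the original $R$-successors of $w$ together with the new world $w^{w}_{\psi}$. On the extended side, (Pr1) guarantees that $w'$ $R$-accesses every original $R$-successor of $w$ plus a world $w^{*}$ whose defining theory coincides with that of $w^{w}_{\psi}$. This yields a satisfaction-preserving identification $w^{w}_{\psi} \leftrightarrow w^{*}$; the inductive hypothesis on $\alpha$ then transfers the truth of $\alpha$ at each matched successor, while the evaluation of $\alpha$ at the points $w$ and $w'$ themselves is handled separately by combining (Inv-p), (Inv-n) and the hypothesis. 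Any further $R$-successor $w''$ of $w'$ permitted by (Pr2) is subordinated (in the sense of the footnoted inclusion) to $w^{*}$ or to some original $R$-successor of $w$, so no such $w''$ can introduce a new witness to $\alpha$ that would disrupt the universal quantifier inside $I\alpha$.

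The announcement case $\phi = [\chi]\beta$ reduces to the previous ones in spirit: I would unfold the left-hand side through Definition \ref{upd-1} and the right-hand side through (Func), observing that both are governed by the consistency of $Cn(\chi)$ with the theory of the evaluation point, and both, when defined, produce a unique updated successor. Shuttling the consistency test and the content of $\beta$ across the two semantics then rests on the inductive hypothesis together with a second invocation of (Pr1) and (Pr2) for the nested update. The main obstacle I anticipate is exactly the $I$ case: pinning down the direction and the force of the subset-style constraint in (Pr2) so that it genuinely prevents spurious $R$-successors of $w'$ from validating $\alpha$, and verifying that the identification $w^{w}_{\psi} \leftrightarrow w^{*}$ is faithful at the level of theories (both being defined by formula-sets rather than as ordinary frame points). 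Once these bookkeeping points are settled, the remaining cases go through routinely.
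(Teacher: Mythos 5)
Your proposal is correct and matches the paper's own argument in all essentials: the atomic case is settled by (Inv-p)/(Inv-n) together with the fact that the update leaves the valuation on old worlds untouched, and the modal case is settled by using (Pr1) to match the successors of $w$ in $\mathcal{M}^{-}\vert^{w}_{\psi}$ (the old successors plus $w^{w}_{\psi}$) with those of $w'$ (the old successors plus $w^{*}$, which has the same defining theory), with (Pr2) blocking spurious extra successors of $w'$. The only organizational difference is that you run an explicit induction on $\phi$ inside the lemma (including the nested-announcement case), whereas the paper argues once that the two points ``access exactly the same information'' and leaves the induction to the proof of Theorem \ref{equiv ext} where the lemma is invoked; this is a presentational rather than a substantive divergence.
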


The proof can be found in Appendix \ref{appL4}.

Now we can provide the equivalence result for models defined with $\models$ and with $\models^{+}$.

\begin{theor}
\label{equiv ext}

For any $w \in W$,

$\mathcal{M}^{-}, w \models \phi$ iff $\mathcal{M}, w \models^{+} \phi$.

\end{theor}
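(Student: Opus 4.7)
My plan is to prove Theorem 2 by induction on the complexity of the formula $\phi$. The base and easy inductive cases are essentially bookkeeping, while the case $\phi = [\psi]\chi$ is where the properties (Func), (Inv-p), (Inv-n), (Pr1), (Pr2) of extended models, together with Lemma \ref{u-R eqv}, are put to work.

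For the base cases $\phi = p$ and $\phi = \neg p$, the two satisfaction clauses in Definitions \ref{frames} and \ref{ext model def sat} are literally the same and depend only on $v$, which is shared between $\mathcal{M}^{-}$ and $\mathcal{M}$. The Boolean cases $\phi \wedge \psi$, $\neg(\phi \wedge \psi)$, $\phi \rightarrow \psi$, $\neg(\phi \rightarrow \psi)$, $\neg \neg \phi$ reduce immediately to the induction hypothesis, since the clauses for $\models$ and $\models^{+}$ coincide. For $\phi = I\psi$ and $\phi = \neg I\psi$, both clauses quantify over the same set of $R$-successors of $w$ in the shared Kripke core and test satisfaction of $\psi$ at those worlds, so the induction hypothesis applied to $\psi$ (at each relevant world) closes the case.

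The main case is $\phi = [\psi]\chi$ (and its negation, which is handled symmetrically). Suppose $\mathcal{M}^{-}, w \models [\psi]\chi$. To show $\mathcal{M}, w \models^{+} [\psi]\chi$, I take any $w'$ with $R^{\psi}ww'$. By (Func), the existence of $w'$ forces $Cn(\psi) \cup \{\rho \mid \mathcal{M}, w \models^{+} \rho\}$ to be consistent; invoking the induction hypothesis for subformulas of strictly smaller complexity, this set coincides with $Cn(\psi) \cup \{\rho \mid \mathcal{M}^{-}, w \models \rho\}$, so the antecedent of the $\models$-clause fires and we obtain $\mathcal{M}^{-}\vert^{w}_{\psi}, w \models \chi$. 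Applying Lemma \ref{u-R eqv} to $\chi$ (its hypothesis is precisely the induction hypothesis for $\chi$ at every world) yields $\mathcal{M}, w' \models^{+} \chi$, as required. Conversely, if $\mathcal{M}, w \models^{+} [\psi]\chi$, assume the consistency condition holds; then by (Func) there is a unique $\phi$-successor $w'$ of $w$, and the assumption gives $\mathcal{M}, w' \models^{+} \chi$, which by Lemma \ref{u-R eqv} transports back to $\mathcal{M}^{-}\vert^{w}_{\psi}, w \models \chi$. The clauses for $\neg [\psi]\chi$ follow by the explicit ``otherwise'' clauses in Definitions \ref{upd-1} and \ref{ext model def sat}, again using (Func).

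The main obstacle is conceptual rather than computational: the set $\{\rho \mid \mathcal{M}, w \models \rho\}$ appearing in the consistency check of Definition \ref{upd-1} ranges over arbitrary $\mathcal{L}^{up}$-formulas, not only subformulas of $[\psi]\chi$, so one must be careful about the induction measure when invoking the induction hypothesis. The standard remedy is to take as outer induction a well-founded measure (e.g.\ announcement depth plus formula length) that ensures the $\rho$'s relevant to the consistency decision are of strictly smaller measure than $[\psi]\chi$; alternatively, one proves the equivalence jointly with Lemma \ref{u-R eqv} under a shared induction. Properties (Pr1) and (Pr2) of extended models are precisely the structural constraints that make the $\psi$-successor of $w$ in $\mathcal{M}$ mirror the fresh world $w^{w}_{\psi}$ added in $\mathcal{M}^{-}\vert^{w}_{\psi}$ together with its accessibility pattern, while (Inv-p), (Inv-n) guarantee propositional invariance; confirming that Lemma \ref{u-R eqv} indeed packages these invariants correctly is the technical heart of the argument.
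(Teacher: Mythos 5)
Your proof follows essentially the same route as the paper's: induction on the structure of the formula, with all cases except $[\psi]\chi$ dispatched by the identity of the satisfaction clauses, and the announcement case handled by combining (Func) with Lemma \ref{u-R eqv} exactly as the paper does. Your closing remark about the induction measure --- that the consistency check in Definition \ref{upd-1} quantifies over arbitrary true formulas rather than subformulas, so a naive length induction needs to be justified --- identifies a subtlety the paper's proof passes over silently, but it does not change the structure of the argument.
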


The proof can be found in Appendix \ref{appT12}.

%
%
%
%
%
%

As pointed out above, $\mathcal{L}^{up}$ can be interpreted on extended models under the semantics described in Definitions \ref{frames} and \ref{upd-1} by setting $\mathcal{M}, w \models \phi \Leftrightarrow \mathcal{M}^{-}, w \models \phi$ for any pointed extended model $(\mathcal{M}, w)$ and any formula $\phi \in \mathcal{L}^{up}$. Thus, we also have the equivalence between two semantic settings described in Definitions \ref{frames}, \ref{upd-1} and Definition \ref{ext model def}:

\begin{theor}
\label{M and M+ eqv}

For any $w \in W$,

$\mathcal{M}, w \models \phi \Leftrightarrow \mathcal{M}, w \models^{+} \phi$.

\end{theor}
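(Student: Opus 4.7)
The plan is to observe that Theorem \ref{M and M+ eqv} is essentially an immediate corollary of Theorem \ref{equiv ext} together with the interpretation convention that was fixed just before it. Recall that the paper stipulates, for any extended model $\mathcal{M}$ and any $\phi \in \mathcal{L}^{up}$, that $\mathcal{M}, w \models \phi$ is defined to mean $\mathcal{M}^{-}, w \models \phi$, where $\mathcal{M}^{-} = (W, R, v)$ is the Kripke core obtained by forgetting the family $\{R^{\phi}\}$. So the theorem is really asking us to reconcile the Kripke-core reading of $\models$ with the $R^{\phi}$-based reading $\models^{+}$ defined on the extended model itself.

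With that in mind, the proof is a one-line composition. First I would unpack the left-hand side using the convention, rewriting $\mathcal{M}, w \models \phi$ as $\mathcal{M}^{-}, w \models \phi$. Then I would invoke Theorem \ref{equiv ext}, which states precisely $\mathcal{M}^{-}, w \models \phi \iff \mathcal{M}, w \models^{+} \phi$. Chaining the two equivalences yields $\mathcal{M}, w \models \phi \iff \mathcal{M}, w \models^{+} \phi$, as required. Since the theorem is claimed ``for any $w \in W$'' and $\phi$ is arbitrary, the composition is valid uniformly.

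There is no genuine obstacle at this stage: all the technical content (the frame conditions (Func), (Inv-p), (Inv-n), (Pr1), (Pr2), and the handling of announcement formulas via Lemma \ref{u-R eqv}) has already been absorbed into the proof of Theorem \ref{equiv ext}. The only thing to verify is that we are entitled to apply Theorem \ref{equiv ext} to every $\phi \in \mathcal{L}^{up}$, including formulas whose principal connective is an announcement operator $[\psi]$; but this is exactly the generality in which Theorem \ref{equiv ext} is stated. Consequently, the present theorem serves mainly as a packaging step, making explicit that the two semantic presentations of Section \ref{extended sec} agree on the full language, so that completeness established in the extended-model semantics transfers back to the semantics of Definitions \ref{frames} and \ref{upd-1}.
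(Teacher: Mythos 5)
Your proposal is correct and matches the paper's own treatment: the paper offers no separate argument for Theorem \ref{M and M+ eqv}, deriving it exactly as you do by composing the stipulation $\mathcal{M}, w \models \phi \Leftrightarrow \mathcal{M}^{-}, w \models \phi$ with Theorem \ref{equiv ext}. You also correctly note that the real work (the properties (Func), (Inv-p), (Inv-n), (Pr1), (Pr2) and Lemma \ref{u-R eqv}) lives in the proof of Theorem \ref{equiv ext}, so nothing further is needed here.
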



\subsection{Completeness}

To prove the completeness for \textbf{LEI$^{up}$}, we construct a canonical model as an extension of the canonical model for the system \textbf{LEI} (see Definition \ref{can mod}), plus we define $R^{\phi C}ww'$ as follows:

\begin{defi}

\

$R^{\phi C}ww'$ iff for all $\psi$: if $[\phi]\psi \in w$ then ($\psi \in w'$ and there exists $w''$ such that $R^{C}w'w''$ and $w'' = \{\phi\} \cup \{\chi \mid \neg I \chi \wedge \chi \in w\}$).

\end{defi}

The canonical relation $R^{\phi C}$ represents a $\phi$-transition, which (as will be shown later) captures the difference between $\mathcal{M}$ and $\mathcal{M}\vert^{w}_{\phi}$ as in Def. \ref{upd-1}.

The following proposition is a generalization of ($dA\wedge$).

\begin{prop}
\label{gen!}

For all $n\leq 1$:

$$\vdash [\phi] (\chi_{1} \wedge ... \wedge \chi_{n}) \leftrightarrow ([\phi]\chi_{1} \wedge ... \wedge [\phi]\chi_{n})$$

\end{prop}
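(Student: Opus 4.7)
The plan is to prove this by induction on $n$, using $(dA\wedge)$ from Proposition \ref{prop10} as the workhorse, plus some routine manipulation of conjunction available in \textbf{Lt2}.

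For the base case $n = 1$, the claim reduces to $\vdash [\phi]\chi_1 \leftrightarrow [\phi]\chi_1$, which follows directly from $(t1)$ and $(Adj)$.

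For the inductive step, I assume the claim holds for $n$ and establish it for $n+1$. First I apply $(dA\wedge)$ with $\psi := \chi_1 \wedge \dots \wedge \chi_n$ and $\chi := \chi_{n+1}$ to obtain
$$\vdash [\phi]((\chi_1 \wedge \dots \wedge \chi_n) \wedge \chi_{n+1}) \leftrightarrow ([\phi](\chi_1 \wedge \dots \wedge \chi_n) \wedge [\phi]\chi_{n+1}).$$
Next I use the induction hypothesis together with the obvious monotonicity/congruence of $\wedge$ with respect to $\leftrightarrow$ (provable in \textbf{Lt2} from A1, A2, (Adj), and Trans) to rewrite the right conjunct, yielding
$$\vdash [\phi](\chi_1 \wedge \dots \wedge \chi_n) \wedge [\phi]\chi_{n+1} \leftrightarrow [\phi]\chi_1 \wedge \dots \wedge [\phi]\chi_n \wedge [\phi]\chi_{n+1}.$$
Finally, transitivity of $\leftrightarrow$ (provable via (Trans) applied componentwise to the two implications making up each biconditional, then recombined with (Adj)) delivers the desired equivalence.

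The only step that requires any real care is the implicit appeal to congruence of $\wedge$ and transitivity of $\leftrightarrow$ in \textbf{Lt2}; since \textbf{Lt2} is a fragment of Kleene logic whose implication is residuation-like (Table \ref{implication}), standard equivalential reasoning still goes through, but one cannot silently import classical identities. In practice this is handled by noting that $\vdash \alpha \leftrightarrow \beta$ unpacks into $\vdash \alpha \rightarrow \beta$ and $\vdash \beta \rightarrow \alpha$, and both directions can be chained with (Trans) and conjoined under (Adj), using A1, A2, A3 to push the biconditional through the conjunction. I expect this bookkeeping to be the main (mild) obstacle, but it is entirely mechanical and analogous to the derivation of Proposition \ref{prop10} itself.
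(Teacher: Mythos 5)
Your proof is correct and takes essentially the same route as the paper, whose entire proof of Proposition \ref{gen!} is the one-line remark that it is ``straightforward from $(dA\wedge)$''; your induction on $n$, with the base case from $(t1)$ and the inductive step chaining $(dA\wedge)$ through congruence of $\wedge$ and transitivity of $\leftrightarrow$ (all derivable in \textbf{Lt2} from A1--A3, (Adj), and (Trans) as you indicate), is exactly the routine argument the authors left implicit.
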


The proof is straightforward from ($dA \wedge$).

Now we can introduce the Truth Lemma.

\begin{lemma}
\label{ext truth lemma}

For all formulas $\phi \in \mathcal{L}^{up}$, and all consistent prime $LEI$-theories $w$,

\begin{center}

$\mathcal{M}^{C}, w \models^{+} \phi \Leftrightarrow \phi \in w$

and

$\mathcal{M}^{C}, w \models^{+} \neg \phi \Leftrightarrow \neg \phi \in w$

\end{center}

\end{lemma}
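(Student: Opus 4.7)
The plan is to prove the Truth Lemma by induction on the complexity of $\phi \in \mathcal{L}^{up}$. The base cases $\phi = p$ and $\phi = \neg p$ are immediate from the definition of $v^{C}$. The Boolean inductive steps (conjunction, implication, double negation, and their negated forms) are handled by the standard arguments exploiting the closure of $w$ under the axioms of \textbf{LEI$^{up}$} and the primeness of $w$, exactly as in Lemma \ref{Truth}. The cases for $I\phi$ and $\neg I\phi$ are a direct replica of the proof of Lemma \ref{Truth}, using $(fact)$, $(I\wedge^{gen})$ from Proposition \ref{wedgegen}, $(emI)$, the rule $(IR)$, and the definition of $R^{C}$.

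The genuinely new case is the announcement $[\phi]\psi$ (and its negation). The easy direction $[\phi]\psi \in w \Rightarrow \mathcal{M}^{C}, w \models^{+} [\phi]\psi$ is immediate: any $w'$ with $R^{\phi C} w w'$ must contain $\psi$ by the very definition of $R^{\phi C}$, so the induction hypothesis delivers $\mathcal{M}^{C}, w' \models^{+} \psi$. For the converse, assume $[\phi]\psi \notin w$; then $(emA)$ and primeness give $\neg [\phi]\psi \in w$. The goal is to construct a consistent prime \textbf{LEI}-theory $w'$ such that $R^{\phi C} w w'$ and $\psi \notin w'$. I would consider
\[
\Delta \;=\; \{\chi \mid [\phi]\chi \in w\} \cup \{\neg \psi\},
\]
argue its consistency by contradiction (relying on Proposition \ref{gen!}, $(dA\rightarrow)$, $(uA)$ and the rule $(nec)$), and then apply the Extension Lemma \ref{Ext} to extend $\Delta$ to a consistent prime theory $w'$ with $\psi \notin w'$.

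To complete the verification of $R^{\phi C} w w'$, I must also supply the second-clause witness $w''$. For this I would form
\[
\Sigma \;=\; \{\phi\} \cup \{\chi \mid \neg I \chi \wedge \chi \in w\},
\]
prove its consistency using $(nI)$, $(fact)$, and, crucially, the rule $(CN)$, which was tailored so that whenever an announcement of $\phi$ eliminates the ignorance of a formula $\chi$ in the context of the non-ignored truths of $w$, that $\chi$ is already a logical consequence of $\phi$ together with those truths. Lemma \ref{Ext} then yields a consistent prime theory $w''$ extending $\Sigma$, and the accessibility condition $R^{C} w' w''$ (i.e.\ $I \chi \in w' \Rightarrow \chi \notin w''$) is secured by a second appeal to $(CN)$ in contrapositive form. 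The negated case $\neg [\phi]\psi$ is then obtained by combining $(emA)$ with the positive case via primeness.

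The chief obstacle will be this construction of $w''$ with exactly the prescribed content together with the compatibility condition $R^{C} w' w''$: this is where the purpose-built axioms $(nI)$, $(nA1)$, $(nA2)$, $(nAp1)$, $(nAp2)$, $(uA)$ and above all the rule $(CN)$ are exercised most delicately, and where the match with Definition \ref{upd-1} must be confirmed. A secondary but pervasive difficulty is the three-valued bookkeeping: each inductive clause must be verified separately for $\phi$ and for $\neg \phi$, because in this semantics $\mathcal{M}^{C}, w \models^{+} \neg \phi$ does not reduce to $\mathcal{M}^{C}, w \not\models^{+} \phi$.
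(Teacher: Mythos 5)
Your overall architecture matches the paper's proof: induction on $\phi$, the Boolean and $I$ cases carried over from Lemma \ref{Truth}, the easy direction of the announcement case read off from the definition of $R^{\phi C}$, and for the hard direction the consistency of $\{\neg\psi\}\cup\{\chi \mid [\phi]\chi \in w\}$ established by contradiction via Proposition \ref{gen!}, $(nec)$, $(dA\rightarrow)$ and $(MP)$, followed by an appeal to the Extension Lemma. Up to that point the proposal is sound.

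There is, however, a genuine gap in the one step you yourself identify as the crux: securing the second-clause witness $w''=\{\phi\}\cup\{\chi \mid \neg I\chi \wedge \chi \in w\}$ with $R^{C}w'w''$. You propose to obtain the condition ``$I\alpha \in w' \Rightarrow \alpha \notin w''$'' by ``a second appeal to $(CN)$ in contrapositive form.'' This cannot work. What must be shown is the contrapositive statement: if $\alpha \in w''$, i.e.\ $\vdash (\phi \wedge \chi^{1} \wedge \dots \wedge \chi^{n}) \rightarrow \alpha$ for some $\chi^{i}$ with $\chi^{i} \wedge \neg I\chi^{i} \in w$, then $[\phi]\neg I\alpha \in w$ and hence $\neg I\alpha \in w'$, so $I\alpha \notin w'$ by consistency. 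That inference runs from ``$\alpha$ is a consequence of $\phi$ plus the non-ignored truths'' to ``the announcement of $\phi$ removes ignorance of $\alpha$.'' The rule $(CN)$ licenses exactly the \emph{opposite} direction (from $\Gamma \vdash I\psi \wedge \neg[\phi]I\psi$ to $\Gamma' \vdash \phi \rightarrow \psi$), and since it is an inference rule rather than an implication of the object language, it has no usable ``contrapositive'' that would deliver the needed conclusion; knowing $\Gamma' \vdash \phi \rightarrow \psi$ tells you nothing via $(CN)$. The tools the argument actually requires are the rules $(intA1)$ and $(intA2)$ (in its generalized form $(intA2)^{gen}$, Proposition \ref{intA2gen}), which are purpose-built for precisely this direction and which your proposal never invokes. ($(CN)$ is exercised elsewhere, namely in establishing the (Pr2) property of the canonical model, Proposition \ref{Prop19}, not in the Truth Lemma.) Without $(intA1)$/$(intA2)$ the accessibility condition $R^{C}w'w''$, and with it the whole hard direction of the announcement case, does not go through.
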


The proof can be found in Appendix \ref{appTruth}.

Now we show that the canonical model has properties (Func), (Inv-p), (Inv-n), (Pr1) and (Pr2); that is, it is a canonical model for the extended model considered in the Subsection \ref{extended sec}.  
The proofs of the following propositions are available in Appendix \ref{appMain}.

\begin{prop}
\label{s=s'}

For any $w$ in $\mathcal{M}^{C}$, if $R^{\phi C}ww'$, then for all $p \in Prop$: 

\begin{center}

$p \in w \Leftrightarrow  p \in w'$;

and

$\neg p \in w \Leftrightarrow \neg p \in w'$. 

\end{center}

\end{prop}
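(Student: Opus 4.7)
My plan is to derive each biconditional by combining the axiom $(INV)$ (for the forward directions) with $(emA)$, $(nAp1)$, and $(nAp2)$, together with the primeness and consistency of the theories involved (for the backward directions).

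For the forward direction $p \in w \Rightarrow p \in w'$, I would use that $(INV)$ makes $p \rightarrow [\phi]p$ a theorem and is thus contained in every consistent prime \textbf{LEI}$^{up}$-theory; since $w$ is closed under modus ponens, $p \in w$ gives $[\phi]p \in w$, and the definition of $R^{\phi C}$ (applied with $\psi := p$) then forces $p \in w'$. The same argument applied to the second conjunct of $(INV)$ yields $\neg p \in w \Rightarrow \neg p \in w'$.

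For the converse $p \in w' \Rightarrow p \in w$, I would argue by contrapositive. Suppose $p \notin w$. Axiom $(emA)$, instantiated with $\neg p$, together with primeness of $w$, gives two cases. If $[\phi]\neg p \in w$, then $\neg p \in w'$ by the definition of $R^{\phi C}$, and consistency of $w'$ yields $p \notin w'$. Otherwise $\neg [\phi]\neg p \in w$; applying $(nAp1)$ and primeness again, either $p \in w$ (contradicting the hypothesis), or $[\phi](p \rightarrow (q \wedge \neg q)) \in w$. In the latter subcase $p \rightarrow (q \wedge \neg q) \in w'$; were $p$ also in $w'$, modus ponens would place $q \wedge \neg q$ in $w'$, contradicting consistency. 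A symmetric argument, using $[\phi]p \vee \neg [\phi]p$ and $(nAp2)$ in place of $(nAp1)$, settles $\neg p \in w' \Rightarrow \neg p \in w$.

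The main obstacle is the three-valued character of the semantics: because the theories here are only consistent and prime (and not maximal in the classical sense), $p \notin w$ does not entail $\neg p \in w$, so one cannot simply push $\neg p$ through $(INV)$ to knock $p$ out of $w'$. The axioms $(nAp1)$ and $(nAp2)$ were designed precisely to close this gap, by producing an implication of the form $p \rightarrow (q \wedge \neg q)$ (respectively $\neg p \rightarrow (q \wedge \neg q)$) inside $w'$ that functions as a refutation of $p$ (respectively $\neg p$) whenever the corresponding announcement does not already decide the atom.
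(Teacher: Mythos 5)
Your proposal is correct and follows essentially the same route as the paper: $(INV)$ plus closure under modus ponens and the definition of $R^{\phi C}$ for the forward directions, and $(nAp1)$/$(nAp2)$ together with primeness and consistency for the converses. The only cosmetic difference is that you argue the converse by contrapositive (making the appeal to $(emA)$ explicit), whereas the paper argues directly from $p \in w'$, but the case analysis and the axioms invoked are identical.
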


%
%
%
%
%
%

\begin{prop}
\label{at most 1}

For any $w$ in $\mathcal{M}^{C}$, $w$ has at most one $\phi$-successor.

\end{prop}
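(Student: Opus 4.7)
The plan is to show that if $R^{\phi C}ww'$ holds, then membership in $w'$ is completely determined by what $[\phi]\psi$-formulas lie in $w$, which immediately forces uniqueness of $\phi$-successors.

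First, I would establish the characterization
\[
\psi \in w' \; \Longleftrightarrow \; [\phi]\psi \in w.
\]
The left-to-right direction of the $(\Leftarrow)$ implication is immediate from the definition of $R^{\phi C}ww'$. For $(\Rightarrow)$, I would argue by contraposition. Suppose $[\phi]\psi \notin w$. Since $w$ is a consistent prime \textbf{LEI}-theory, axiom $(emA)$ gives $[\phi]\psi \vee \neg [\phi]\psi \in w$, so by primeness $\neg [\phi]\psi \in w$. Applying axiom $(nA1)$ and closure of $w$ under derivability, $[\phi](\psi \rightarrow (p \wedge \neg p)) \in w$ for the chosen propositional variable $p$. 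Then by the definition of $R^{\phi C}ww'$, the formula $\psi \rightarrow (p \wedge \neg p)$ belongs to $w'$. If we had $\psi \in w'$, closure of $w'$ under \textbf{LEI} (in particular modus ponens) would yield $p \wedge \neg p \in w'$, contradicting the consistency of $w'$. Hence $\psi \notin w'$.

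Once this characterization is in place, suppose $R^{\phi C}ww'$ and $R^{\phi C}ww''$. For every formula $\psi$, the equivalence above (applied to both $w'$ and $w''$) gives
\[
\psi \in w' \; \Longleftrightarrow \; [\phi]\psi \in w \; \Longleftrightarrow \; \psi \in w''.
\]
Therefore $w' = w''$, and $w$ has at most one $\phi$-successor.

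I do not foresee a serious obstacle here: the whole argument is a two-line deduction in the canonical-theory setup, and the main ingredients — primeness of $w$, axiom $(emA)$ supplying the disjunction, axiom $(nA1)$ giving the bottom-like formula under $[\phi]$, and consistency of $w'$ — are all already available. The only subtle point is to observe that the propositional variable $p$ in $(nA1)$ is arbitrary, so the resulting $p \wedge \neg p$ does act as $\bot$ inside the consistent theory $w'$; this is standard and does not require any further machinery.
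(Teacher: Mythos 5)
Your proof is correct and follows essentially the same route as the paper's: both hinge on getting $\neg[\phi]\psi \in w$ when $[\phi]\psi \notin w$, then using $(nA1)$ together with the definition of $R^{\phi C}$ and the consistency of the successor to derive $p \wedge \neg p$ there. Your packaging of the argument as the characterization $\psi \in w' \Leftrightarrow [\phi]\psi \in w$ is a slightly cleaner presentation, and you are in fact more explicit than the paper in justifying $\neg[\phi]\psi \in w$ via $(emA)$ and primeness.
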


\begin{prop}
\label{a unique succ}

For any formula $\phi$: if $\{\phi\} \cup \{\chi \mid \chi \wedge \neg I\chi \in w\}$ is consistent, where $w$ is a consistent prime theory, then $w$ must have a unique $\phi$-successor $w'$, such that $w' = \{\psi \mid [\phi]\psi \in w\}$ and there exists $w'' = \{\phi\} \cup \{\chi \mid \neg I \chi \wedge \chi \in w\}$ such that if $I \chi_{1} \in w'$ then $\chi_{1} \not \in w''$. If $\{\phi\} \cup \{ \chi \mid \chi \wedge \neg I \chi \in w\}$ is inconsistent, then $w$ does not have any $\phi$-successor.

\end{prop}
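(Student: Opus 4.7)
The argument splits into two cases according to whether the set $\Gamma := \{\phi\} \cup \{\chi \mid \chi \wedge \neg I \chi \in w\}$ is consistent.

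For the main case ($\Gamma$ consistent), I would take
\[
w' := \{\psi \mid [\phi]\psi \in w\}
\]
as the candidate $\phi$-successor and check four things: (a) $w'$ is closed under \textbf{LEI}$^{up}$ derivation, (b) $w'$ is prime, (c) $w'$ is consistent, and (d) $w'$ satisfies the canonical clause for $R^{\phi C}ww'$. Property (a) follows from $(dA\rightarrow)$ together with $(nec)$; property (b) follows from $(dA\vee)$, so that $[\phi](\psi \vee \chi) \in w$ splits via primeness of $w$ into $[\phi]\psi \in w$ or $[\phi]\chi \in w$; and property (c) follows from $(uA)$ combined with consistency of $\Gamma$: if both $\psi$ and $\neg\psi$ lay in $w'$, then $\phi \rightarrow (p \wedge \neg p) \in w$, which, since $\phi$ belongs to $\Gamma$, is used to derive a contradiction with the consistency of $\Gamma$. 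Uniqueness of $w'$ then follows immediately from Proposition \ref{at most 1}.

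To verify (d) I would first produce the required $w''$ by invoking the Extension Lemma (Lemma \ref{Ext}) on the consistent set $\Gamma$, obtaining a consistent prime theory whose formulas include those of $\{\phi\} \cup \{\chi \mid \neg I \chi \wedge \chi \in w\}$ (the equality in the statement being read as inclusion, in the style of (Pr2)). The nontrivial obligation is $R^{C}w'w''$: whenever $I\chi_{1} \in w'$ we need $\chi_{1} \not\in w''$. Since $I\chi_{1} \in w'$ amounts to $[\phi]I\chi_{1} \in w$, the two ways $\chi_{1}$ could appear in $w''$ must be ruled out: if $\chi_{1}$ is $\phi$, then $[\phi]I\phi \in w$ combines with $(nI)$, giving also $[\phi]\neg I\phi \in w$, whereupon $(uA)$ yields $\phi \rightarrow (p \wedge \neg p) \in w$, which contradicts consistency of $\Gamma$ exactly as in (c); if instead $\chi_{1} \wedge \neg I \chi_{1} \in w$, I would apply the rule $(CN)$ to derive $\phi \rightarrow \chi_{1}$ from the context $\Gamma'= \{\chi \mid \chi \wedge \neg I \chi \in w\}$ and then, since $\phi \in \Gamma$ together with $\phi \rightarrow \chi_{1}$ forces $\chi_{1}$ into the derivable closure of $\Gamma$ while $\neg I\chi_{1} \in w$ witnesses information in $\Gamma$ incompatible with $I\chi_1 \in w'$, read off a contradiction with consistency of $\Gamma$.

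In the inconsistent case, any candidate $\phi$-successor $w'$ would, by the second conjunct of the definition of $R^{\phi C}$, require a world of $W^{C}$ containing every formula in $\{\phi\} \cup \{\chi \mid \neg I\chi \wedge \chi \in w\} \subseteq \Gamma$; since $W^{C}$ contains only consistent prime theories, no such world can exist when $\Gamma$ is inconsistent, so $w$ has no $\phi$-successor. The main obstacle I anticipate is the $R^{C}w'w''$ verification in the sub-case $\chi_{1} \wedge \neg I\chi_{1} \in w$: the rule $(CN)$ has a delicate side-condition on $\Gamma'$, and translating $[\phi]I\chi_{1} \in w$ into the hypothesis $I\chi \wedge \neg [\phi] I\chi$ of $(CN)$ requires carefully bookkeeping membership in $w$ (together with an appeal to $(emA)$ to move between $[\phi]I\chi_{1}$ and $\neg[\phi]I\chi_{1}$) so that the resulting derivation lives in precisely the right subcontext of $\Gamma$.
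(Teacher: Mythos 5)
Your construction of $w'$ as $\{\psi \mid [\phi]\psi \in w\}$, with closure via $(nec)$/$(dA\rightarrow)$, primeness via $(dA\vee)$, consistency via $(uA)$, and uniqueness via Proposition \ref{at most 1}, matches the paper, and your argument for the inconsistent case (no consistent prime theory in $W^{C}$ can equal the inconsistent set required by the second conjunct of $R^{\phi C}$) is acceptable. The gap is in the verification that $I\chi_{1} \in w'$ implies $\chi_{1} \notin w''$. First, your case split is incomplete: $w''$ is the \emph{theory} generated by $\{\phi\} \cup \{\chi \mid \neg I\chi \wedge \chi \in w\}$, so $\chi_{1}$ can belong to $w''$ without being $\phi$ or one of the generators --- it need only satisfy $\phi \wedge \chi^{1} \wedge \cdots \wedge \chi^{n} \vdash \chi_{1}$ for some generators $\chi^{1},\dots,\chi^{n}$. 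Your two sub-cases do not cover such joint consequences, which is precisely the general situation.

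Second, the tool you reach for in the remaining sub-case is the wrong one. The rule $(CN)$ has hypothesis $\Gamma \vdash I\psi \wedge \neg[\phi]I\psi$, but from $I\chi_{1} \in w'$ you obtain $[\phi]I\chi_{1} \in w$, i.e.\ the \emph{opposite} polarity; $(emA)$ is merely $[\phi]\psi \vee \neg[\phi]\psi$ and cannot convert one into the other inside a consistent theory, so the "bookkeeping" you flag as delicate is in fact impossible. The paper goes the other way: from $\vdash (\phi \wedge \chi^{1} \wedge \cdots \wedge \chi^{n}) \rightarrow \chi_{1}$ (obtained by the deduction theorem from $\chi_{1} \in w''$) it applies $(intA2)^{gen}$ (Proposition \ref{intA2gen}) to get $\vdash (\chi^{1} \wedge \neg I\chi^{1} \wedge \cdots \wedge \chi^{n} \wedge \neg I\chi^{n}) \rightarrow [\phi]\neg I\chi_{1}$; since all the antecedent conjuncts lie in $w$, this puts $[\phi]\neg I\chi_{1}$ in $w$ and hence $\neg I\chi_{1}$ in $w'$, contradicting $I\chi_{1} \in w'$ directly. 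Your sketch never reaches a concrete contradiction of this form --- "incompatible information in $\Gamma$" does not by itself contradict consistency of $\Gamma$, because $I\chi_{1}$ lives in $w'$, not in $w$. To repair the proof, replace the $(CN)$ sub-case and the missing general case by a single application of $(intA2)^{gen}$ as above (with $(intA1)$ covering the degenerate case where $\chi_{1}$ is a consequence of $\phi$ alone).
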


In light of Theorem \ref{M and M+ eqv}, Prop. \ref{a unique succ} also clarifies the consistency beteween  the (Func) property and the construction of the updated model given in Def. \ref{upd-1}.

\begin{prop}
\label{Prop18}

 If $R^{\phi C}ww'$, then (1) for all $w''$ if $R^{C}ww''$ then $R^{C}w'w''$, (2) there exists $w^{*} = \{\phi\} \cup \{\chi \mid \neg I \chi \wedge \chi \in w\}$ such that $R^{C}w'w^{*}$ and (3) for all $w''$ if $R^{C}ww''$ then $R^{C}w^{*}w''$.

\end{prop}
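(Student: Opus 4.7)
I would decompose the proof by the three clauses and handle them in the order (2), (1), (3). Clause (2) is essentially packaged into the very definition of $R^{\phi C}$ together with Prop \ref{a unique succ}: whenever $R^{\phi C} w w'$ holds, the consistency of $\{\phi\} \cup \{\chi \mid \chi \wedge \neg I\chi \in w\}$ required by Prop \ref{a unique succ} is available, so by the Extension Lemma the set extends to a consistent prime theory $w^{*}$, and the conclusion ``if $I\chi_{1} \in w'$ then $\chi_{1} \notin w^{*}$'' of Prop \ref{a unique succ} is precisely $R^{C} w' w^{*}$.

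For clause (1), assume $R^{C} w w''$ and take $I\chi \in w'$; the target is $\chi \notin w''$. From Prop \ref{a unique succ}, $w' = \{\psi \mid [\phi]\psi \in w\}$, so $[\phi] I\chi \in w$. Applying $(fact)$, the rule $(nec)$, and axiom $(dA\rightarrow)$ in turn yields $[\phi]\chi \in w$, hence $\chi \in w'$. Combining $I\chi \in w'$ with clause (2) gives $\chi \notin w^{*}$, so $\chi \wedge \neg I\chi \notin w$. By $(emI)$ and primeness of $w$, either $I\chi \in w$ (and $R^{C} w w''$ closes the case immediately) or $\chi \notin w$. The nontrivial subcase is $\chi \notin w$: supposing $\chi \in w''$ for contradiction, I would use the generalized preservation rule $(intA2)^{gen}$ to derive $[\phi]\neg I\chi \in w$, and then combine this with $[\phi]I\chi \in w$ via the uniqueness axiom $(uA)$ to obtain $\phi \rightarrow (p \wedge \neg p) \in w$, contradicting the consistency of $\{\phi\} \cup \{\chi' \mid \chi' \wedge \neg I\chi' \in w\}$ on which the existence of $w'$ depends.

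For clause (3), assume $R^{C} w w''$ and take $I\sigma \in w^{*}$; I want $\sigma \notin w''$. Because $w^{*}$ is a consistent prime extension of $\{\phi\} \cup \{\chi \mid \chi \wedge \neg I\chi \in w\}$, compactness supplies finitely many non-ignored truths $\chi_{1}, \ldots, \chi_{n}$ of $w$ with $\vdash (\phi \wedge \chi_{1} \wedge \cdots \wedge \chi_{n}) \rightarrow I\sigma$. Applying $(fact)$ and the rule $(intA2)^{gen}$, together with $R^{C} w w''$ applied to the ignored truths underlying this derivation, should yield the desired $\sigma \notin w''$. Semantically this is transparent: the $w$-accessible worlds of the updated model coincide with the $w^{*}$-accessible ones (property (Pr1) part 3), so an ignorance claim at $w^{*}$ excludes $\sigma$ from exactly the $w$-accessible worlds over which $R^{C} w w''$ ranges.

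The principal obstacle is the syntactic derivation in the hard subcase of (1) — passing from the accessible-world witness $\chi \in w''$ to a proof of $[\phi]\neg I\chi \in w$. Semantically the picture is clear (the witness $w''$ is untouched by the announcement and continues to refute $I\chi$ at $w$ afterwards), but the syntactic transfer presumably requires a structural induction on $\chi$, with atomic cases discharged via the preservation axioms $(INV)$, $(nAp1)$, and $(nAp2)$ and modal cases propagated through $(intA2)^{gen}$. A similar technical issue arises for the transfer in (3), which may call for an analogous inductive argument on $\sigma$.
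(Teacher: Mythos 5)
Your clause (2) matches the paper (the successor $w^{*}$ is built into the definition of $R^{\phi C}$; the paper secures non-vacuity via $(nI)$), and your clause (3) is in the right spirit: the paper argues that $I\alpha \in w^{*}$ forces $\vdash (\phi \wedge \chi) \rightarrow I\alpha$ for some non-ignored truth $\chi$ of $w$, so the consistent extension of $w$ by $\phi$ would contain $I\alpha$, contradicting the $\neg I\alpha \in w$ obtained from $\alpha \in w''$ and $R^{C}ww''$ — a cleaner route than your detour through $(fact)$ and $(intA2)^{gen}$, which as stated does not land (applying $(intA2)^{gen}$ to a premise with consequent $I\sigma$ yields $[\phi]\neg I I\sigma$, not anything about $\sigma$ and $w''$).

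The genuine gap is in clause (1), in exactly the subcase you flag as "the principal obstacle." Your plan is to pass from the witness $\chi \in w''$ to $[\phi]\neg I\chi \in w$ via $(intA2)^{gen}$, but that rule requires a theoremhood premise $\vdash (\phi \wedge \psi_{1} \wedge \dots \wedge \psi_{n}) \rightarrow \chi$ with the $\psi_{i}$ non-ignored truths of $w$; no such premise is available when $\chi$ is merely a formula holding at some $R^{C}$-accessible world, so the step fails, and the fallback "structural induction on $\chi$" is not an argument. The paper closes this case with the axiom $(AI)$, namely $(\phi \wedge I\neg\psi \wedge [\phi]I\psi) \rightarrow I(\psi \vee \neg\psi)$, which is in the axiom list precisely to secure this canonical-model property: after Lindenbaum-extending $w$, $w'$, $w''$, $w^{*}$, one reads off $I\neg\alpha \in w_{m}$ from $\alpha \in w''_{m}$, has $[\phi]I\alpha \in w_{m}$ from $I\alpha \in w'_{m}$, and $\phi \in w_{m}$ from the consistency underlying $w^{*}_{m}$; then $(AI)$ yields $I(\alpha \vee \neg\alpha) \in w_{m}$, hence $\neg(\alpha \vee \neg\alpha) \in w''_{m}$, and $(A8)$ gives the contradiction. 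Without invoking $(AI)$ (which your proposal never mentions), clause (1) does not go through.
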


\begin{prop}
\label{Prop19}

If $R^{\phi C}ww'$ and $R^{C}w'w''$, then either $w'' \subseteq \{\phi\} \cup \{\chi \mid \neg I \chi \wedge \chi \in w\}$ or $w'' \subseteq w'''$ for some $w'''$ s.t. $R^{C}ww'''$.

\end{prop}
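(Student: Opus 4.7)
The plan is a case-analysis argument. If there exists a consistent prime theory $w'''$ with $R^{C}ww'''$ and $w'' \subseteq w'''$, the second disjunct is immediate, so I suppose no such $w'''$ exists. The obvious candidate $w''' := w''$ must then already fail, so $R^{C}ww''$ does not hold; by definition of $R^{C}$ this supplies a formula $\beta \in w''$ with $I\beta \in w$. The task is to conclude $w'' \subseteq \{\phi\} \cup \{\chi \mid \neg I\chi \wedge \chi \in w\}$, with the right-hand side read as generating a prime theory as in Proposition \ref{Prop18}(2).

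My tools are Proposition \ref{a unique succ} (giving $w' = \{\psi \mid [\phi]\psi \in w\}$ together with the consistency of $\{\phi\} \cup \{\chi \mid \chi \wedge \neg I\chi \in w\}$ whenever $R^{\phi C}ww'$ holds), the rule $(CN)$, and the axioms $(emA)$, $(emI)$, $(uA)$. For every $\xi \in w''$ I first derive $[\phi]\neg I\xi \in w$: from $R^{C}w'w''$ and $(emI)$ in the prime theory $w'$ I obtain $\neg I\xi \in w'$, and Proposition \ref{a unique succ} translates this into $[\phi]\neg I\xi \in w$. I then exclude $[\phi]I\xi \in w$ by $(uA)$: if both $[\phi]I\xi$ and $[\phi]\neg I\xi$ were in $w$, then $\phi \rightarrow (p \wedge \neg p) \in w$, which is incompatible with the consistency of $\{\phi\} \cup \{\chi \mid \chi \wedge \neg I \chi \in w\}$ supplied by Proposition \ref{a unique succ}. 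Hence $(emA)$ and primeness give $\neg [\phi] I\xi \in w$.

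Now I split on the $I$-status of each $\xi \in w''$, applying $(emI)$ inside $w$. If $I\xi \in w$, the rule $(CN)$ with $\Gamma := w$ and $\psi := \xi$ converts $I\xi \wedge \neg [\phi] I\xi \in w$ into $\{\chi \mid \chi \wedge \neg I\chi \in w\} \vdash \phi \rightarrow \xi$, placing $\xi$ in the prime theory generated by $\{\phi\} \cup \{\chi \mid \chi \wedge \neg I\chi \in w\}$; the witness $\beta$ is the clean instance. If $\neg I\xi \in w$ and $\xi \in w$, then $\xi$ is already in the base, hence in the generated theory, and if $\xi = \phi$ it is trivially in the base.

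The main obstacle is the residual case $\xi \in w''$ with $\xi \neq \phi$, $\neg I\xi \in w$, and $\xi \notin w$. Here I plan to use the propositional preservation axioms $(INV)$, $(nA1)$, $(nA2)$ together with the witness $\beta$: by Proposition \ref{Prop18}(1), the presence of $\beta$ prevents $w''$ from being a sub-theory of any $R^{C}$-successor of $w$, so $w''$ must align with the ``new world'' $w^{w}_{\phi}$ of Definition \ref{upd-1}, forcing this residual $\xi$ to be either vacuous or derivable in the closure of $\{\phi\} \cup \{\chi \mid \chi \wedge \neg I\chi \in w\}$. Bridging between the canonical prime theory $w''$ and the semantic new world is the delicate point of the argument.
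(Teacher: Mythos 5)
There is a genuine gap, and you have located it yourself: the residual case $\xi \in w''$ with $\neg I\xi \in w$, $\xi \notin w$, $\xi \neq \phi$ is not handled, and the sketch you give for it (appealing to $(INV)$, $(nA1)$, $(nA2)$ and ``aligning'' $w''$ with the semantic new world $w^{w}_{\phi}$) is not an argument --- $w''$ is an arbitrary canonical $R^{C}$-successor of $w'$ and can contain many formulas outside $w$, for instance negations $\neg\chi$ of formulas with $I\chi \in w$, so this case is non-empty and your tools give you no way to place such a $\xi$ in the theory generated by $\{\phi\} \cup \{\chi \mid \neg I\chi \wedge \chi \in w\}$. The source of the difficulty is your decomposition: you try to establish the first disjunct \emph{element-by-element} over all of $w''$, which forces you to account for formulas to which neither $(CN)$ nor base-membership applies.

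The paper's proof never meets this case because it argues by contradiction with a \emph{single} witness. Negate both disjuncts; the negation of the second disjunct, instantiated at $w''' := w''$, shows $R^{C}ww''$ fails, which by definition of $R^{C}$ yields one formula $\alpha \in w''$ with $I\alpha \in w$. For that $\alpha$ your own machinery already closes the argument: $\alpha \in w''$ and $R^{C}w'w''$ give $I\alpha \notin w'$, hence $\neg[\phi]I\alpha \in w$ (via $w' = \{\psi \mid [\phi]\psi \in w\}$ and $(emA)$ with primeness --- your detour through $[\phi]\neg I\alpha$ and $(uA)$ also works), so $w \vdash I\alpha \wedge \neg[\phi]I\alpha$, and $(CN)$ plus the deduction theorem give $\{\phi\} \cup \{\chi \mid \chi \wedge \neg I\chi \in w\} \vdash \alpha$. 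Since $I\alpha \in w$ forces $\neg I\alpha \notin w$, this single $\alpha$ already contradicts the assumed failure of the first disjunct; no universal claim about every $\xi \in w''$ is needed. In short, keep your $(CN)$ step --- it is exactly the paper's core move --- but restructure the proof as a reductio on one witness rather than a constructive proof of the inclusion.
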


Having the canonical model, the completeness proof with respect to extended models satisfying the properties (Func), (Inv-p), (Inv-n), (Pr1), and (Pr2) is straightforward.

\begin{theor}

The system \textbf{LEI$^{up}$} is sound and complete with respect to the extended models satisfying the properties (Func), (Inv-p), (Inv-n), (Pr1), and (Pr2).

\end{theor}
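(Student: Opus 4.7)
The proof is, by design, a ``glue everything together'' argument, so my plan is simply to assemble the pieces already established in the preceding two subsections. For soundness I would appeal to Theorem \ref{SoundUP}, which gives soundness of \textbf{LEI$^{up}$} with respect to the class of models defined by Definitions \ref{frames} and \ref{upd-1}, together with Theorem \ref{M and M+ eqv}, which tells us that on any extended model whose relations $R^{\phi}$ satisfy (Func), (Inv-p), (Inv-n), (Pr1) and (Pr2), the two notions of satisfaction $\models$ and $\models^{+}$ agree on every formula of $\mathcal{L}^{up}$. Since validity is preserved under this coincidence, every \textbf{LEI$^{up}$}-theorem is automatically valid in every such extended model.

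For completeness I would argue contrapositively. Suppose $\phi \not\vdash \psi$ in \textbf{LEI$^{up}$}. First I would invoke an analog of the Extension Lemma (Lemma \ref{Ext}), adapted to the richer language $\mathcal{L}^{up}$ but proved along the same Dunn-style lines, to obtain a consistent prime \textbf{LEI$^{up}$}-theory $w_{0}$ with $\phi \in w_{0}$ and $\psi \notin w_{0}$. Next I would consider the canonical extended model $\mathcal{M}^{C} = (W^{C}, R^{C}, \{R^{\phi C} \mid \phi \in \mathcal{L}^{up}\}, v^{C})$ built on top of Definition \ref{can mod} with the relations $R^{\phi C}$ introduced just before Proposition \ref{gen!}. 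The Truth Lemma (Lemma \ref{ext truth lemma}) then yields $\mathcal{M}^{C}, w_{0} \models^{+} \phi$ and $\mathcal{M}^{C}, w_{0} \not\models^{+} \psi$, so $w_{0}$ is the desired refuting point.

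It then remains to verify that $\mathcal{M}^{C}$ really belongs to the target class of extended models. This is done by collecting Propositions \ref{s=s'}, \ref{at most 1}, \ref{a unique succ}, \ref{Prop18} and \ref{Prop19}: Proposition \ref{s=s'} supplies (Inv-p) and (Inv-n); Propositions \ref{at most 1} and \ref{a unique succ} together deliver (Func); Proposition \ref{Prop18} yields (Pr1); and Proposition \ref{Prop19} yields (Pr2). So $\mathcal{M}^{C}$ sits in the class of interest and refutes the non-derivable consequence, establishing completeness.

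The main technical obstacle --- already absorbed into the cited lemmas --- is the Truth Lemma case for formulas of the form $[\phi]\psi$. Reading $R^{\phi C}$ from its definition, one must use the whole cluster of announcement axioms, namely $(nI)$, $(nA1)$, $(nA2)$, $(dA\rightarrow)$, $(dA\vee)$, $(INV)$, $(uA)$ and $(AI)$, together with the non-standard rule $(CN)$ and the generalization $(intA2)^{gen}$, to show both directions of $[\phi]\psi \in w \Leftrightarrow (\forall w')(R^{\phi C}ww' \Rightarrow \psi \in w')$. Once that delicate step is granted, the structural checks for $R^{\phi C}$ that realize the five frame properties reduce to routine theory-extension manipulations of the kind already used in the static Extension Lemma.
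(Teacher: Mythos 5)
Your proposal is correct and follows essentially the same route as the paper: the paper also obtains the result by combining the canonical extended model built from Definition \ref{can mod} with the relations $R^{\phi C}$, the Truth Lemma \ref{ext truth lemma}, and Propositions \ref{s=s'}--\ref{Prop19} to verify (Inv-p), (Inv-n), (Func), (Pr1) and (Pr2), with soundness supplied by Theorem \ref{SoundUP}. Your assignment of which propositions deliver which frame properties matches the paper's exactly.
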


Then, by Theorem \ref{M and M+ eqv}, we have completeness with respect to the semantics described in Definitions \ref{frames} and \ref{upd-1}.

\begin{theor}

\textbf{LEI$^{up}$}  is sound and strongly complete with respect to the semantics of \textbf{LEI$^{up}$} on the class of all Kripke frames.

\end{theor}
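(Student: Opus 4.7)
The plan is to observe that this final theorem is essentially a corollary obtained by composing the previous two results: the completeness of \textbf{LEI$^{up}$} with respect to the class of extended models satisfying (Func), (Inv-p), (Inv-n), (Pr1), and (Pr2), together with the semantic equivalence of Theorem \ref{M and M+ eqv}. Soundness is immediate from Theorem \ref{SoundUP}, so the only content is in transferring strong completeness from the extended-models semantics to the semantics of Definitions \ref{frames} and \ref{upd-1}.

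The first step is to recall that all the heavy lifting has already been done upstream: the canonical model $\mathcal{M}^{C} = (W^{C}, R^{C}, \{R^{\phi C} \mid \phi \in \mathcal{L}^{up}\}, v^{C})$ satisfies all five structural conditions by Propositions \ref{s=s'}--\ref{Prop19}, and Lemma \ref{ext truth lemma} aligns membership in a consistent prime \textbf{LEI}-theory with $\models^{+}$-satisfaction in $\mathcal{M}^{C}$. Combining these with the Extension Lemma \ref{Ext} (applied in the $\mathcal{L}^{up}$ setting) yields the previous theorem.

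The second step is the transfer. Assume $\Gamma \not\vdash_{\textbf{LEI}^{up}} \phi$. By the previous theorem, there exists an extended model $\mathcal{M}$ satisfying (Func), (Inv-p), (Inv-n), (Pr1), and (Pr2) and a world $w \in W$ with $\mathcal{M}, w \models^{+} \gamma$ for every $\gamma \in \Gamma$ while $\mathcal{M}, w \not\models^{+} \phi$. Applying Theorem \ref{M and M+ eqv} to each $\gamma \in \Gamma$ and to $\phi$, we obtain $\mathcal{M}, w \models \gamma$ for all $\gamma \in \Gamma$ and $\mathcal{M}, w \not\models \phi$ in the sense of Definitions \ref{frames} and \ref{upd-1}, where the underlying Kripke core $\mathcal{M}^{-} = (W, R, v)$ lies in the class of all Kripke frames (no frame condition is imposed). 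This exhibits the required countermodel and establishes strong completeness.

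The only delicate point—and the one absorbed into Theorem \ref{M and M+ eqv} via Lemma \ref{u-R eqv}—is ensuring that the inductive translation between $\models$ and $\models^{+}$ commutes with the announcement operator $[\phi]$, which is precisely why the structural properties (Func) through (Pr2) on the canonical extended model were engineered to mirror the construction of $\mathcal{M} \vert^{w}_{\phi}$ in Definition \ref{upd-1}. Since that equivalence is already in hand and the canonical model has been shown to satisfy these properties, no further construction is needed: the final theorem follows by a direct chaining of the two results.
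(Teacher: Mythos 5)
Your proposal is correct and follows exactly the paper's route: soundness from Theorem \ref{SoundUP}, completeness with respect to extended models satisfying (Func)--(Pr2) via the canonical model construction (Lemma \ref{ext truth lemma} and Propositions \ref{s=s'}--\ref{Prop19}), and then transfer to the semantics of Definitions \ref{frames} and \ref{upd-1} by Theorem \ref{M and M+ eqv}. The paper itself gives no more detail than this chaining, so your write-up is, if anything, a more explicit version of the intended argument.
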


\subsection{Modelling (non-)excusable ignorance}

We are now able to model the two examples taken from \citet{Peels2014} that were presented in the introduction. Recall that, in the first situation, an agent is in a position in which he gives a poisoned chocolate bar to his daughter, but his ignorance of the fact that the chocolate might be poisoned provides him with a full excuse for his action. We argued that this situation can be modelled by using the $I$ operator. Let $p$ stand for the proposition `The chocolate bar might be poisoned.'\footnote{Note that to deal with the aletic modality of possibility contained in this proposition, our setting can be extended with this kind of modality. However, to keep the example as simple as possible, we will leave this issue aside and consider the proposition as atomic.} Consider a \textbf{LEI}-model $\mathcal{M}^{e} = (W, R, v)$, with a world $w_{0}$, such that $v(p) = \{w_{0}\}$ and for all $w_{i}$ such that $Rw_{0}w_{i}$, $p$ is not true in $w_{i}$ (see Figure \ref{ex model}). Clearly, in a model so defined,  $\mathcal{M}^{e}, w_{0} \models I p$ holds. It should be noted that $\neg p$ can belong (or not) to some $w_{i}$ that is not $w_{0}$ in this model. If it belongs to $w_{i}$, then this means that the agent considers a possible world in which the chocolate bar might not be poisoned. If neither $p$ nor $\neg p$ belong to $w_{i}$, then this means that the agent does not consider the fact that the chocolate bar might be poisoned. Both possibilities provide either a case of disbelieving ignorance, or of deep ignorance, thus providing a full excuse to the agent's actions.

Now, let us integrate the second example from the introduction into the situation depicted in $\mathcal{M}^{e}$. In this case, the agent hears on the news that some maniac is poisoning chocolate bars in the neighborhood and that the chocolate bar in his and other houses might be poisoned. For simplicity's sake, let us consider only one part of the news that the agent hears, namely the fact that the chocolate bar might be poisoned (i.e., proposition $p$). This news can be considered as an announcement. Thus, whenever $p$ is consistent with all that is true and not ignored by the agent: if $p$ is announced, then one obtains the model $\mathcal{M}^{e}\vert^{w_{0}}_{p}$ (see Figure \ref{model ex}), in which $\mathcal{M}^{e}\vert^{w_{0}}_{p}, w_{0} \not \models I p$. This means that the agent takes the news represented by the announcement into his consideration. However, he does not revise his previous considerations about the chocolate bar in his house because it would be the case if one represents this news via the standard $[! \,\,]$ procedure. Moreover, the news could be not true, as sometimes happens in the real world. Should the fact that the news is not true influence the agent's culpability? We argue that our model provides an answer to this question. If the news is not true but does not contradict any truth of which the agent is not ignorant, then the agent has no reason to disbelieve them. Thus, he has to consider $p$ as true, which (in turn) means that he should be blameworthy for giving the chocolate bar to his daughter. However, if the announcement $p$ clearly contradicts some true statement of which the agent is not ignorant, then he does not take the news seriously and does not add this information to what he considers to be true. In this case, the agent seems to disbelieve $p$, and thus continues to be ignorant of $p$ in the sense of $I$. This is represented by the fact that if $Cn(p) \cup \{\chi \mid \mathcal{M}, w_{0} \models \chi \wedge \neg I \chi\}$ is inconsistent, then the model $\mathcal{M}^{e}\vert^{w_{0}}_{p}$ cannot be constructed and in this case $\mathcal{M}^{e}, w_{0} \models [p]I p$ would hold.

The example presented here can take a more complex form if one considers that there was a proposition $r$ corresponding to `the chocolate bar is open' and that the agent was not ignorant of its truth in the model $\mathcal{M}^{e}$. In this case, one may consider that $p$ is a mere consequence of  $p \wedge r$. This situation can also be represented formally, thus providing a more fine-grained analysis of the conditions under which the agent is morally culpable. To simplify the presentation, we leave this discussion aside but we remark that \textbf{LEI$^{up}$} seems to be a  very natural and intuitively clear setting for representing real world scenarios involving reasoning on the basis of one's ignorance.

\begin{figure}
\centering
\begin{tikzpicture}[modal,world/.append style={minimum size=0.8cm}]
\node[real world] (w) [label=right:$w_{0}$] {$p$};
\node[world1] (s) [label=right:$$, above=of w] {$...$};
\path[->] (w) edge (s);
\end{tikzpicture}
\caption{Model $\mathcal{M}^{e}$}
\label{ex model}
\end{figure}
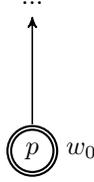

\begin{figure}
\centering
\begin{tikzpicture}[modal,world/.append style={minimum size=0.8cm}]
\node[real world] (w) [label=right:$w_{0}$] {$p$};
\node[world1] (s) [label=right:$$, above=of w] {$...$};
\node[world] (u) [label=right:$w^{w_{0}}_{0p}$, left=of s] {$p$};
\path[->] (w) edge (u);
\path[->] (w) edge (s);
\end{tikzpicture}
\caption{Model $\mathcal{M}^{e}|^{w_{0}}_{p}$}
\label{model ex}
\end{figure}
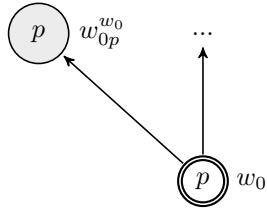

\section{Conclusion}

On the basis of recent debates in the epistemology of ignorance, we endorse the position that the fully excusable ignorance of a proposition $p$ is the one in which an agent does not consider $p$ as true, and is thus unable to act as if this proposition were true. We have introduced an original complete and sound system \textbf{LEI} to model the kind of ignorance that provides an agent with a moral excuse (i.e., disbelieving and deep ignorance). The originality of this framework lies in the fact that it is characterized by Kripke semantics with possibly incomplete worlds. This allowed us to model in a natural and intuitive way situations in which an agent can be not only disbelievingly ignorant but also deeply ignorant. Moreover, to take into account the conditions of a possible change in an agent's ignorance, we extended this system with a public announcement operator, and introduced a complete and sound system \textbf{LEI$^{up}$}. Interestingly, the update procedure that we defined does not  necessarily require the announcement to be true, as  is the case with the usual eliminative public announcement. This dynamic framework permits us to model the transformation of excusable ignorance into non-excusable one, thus clarifying the logical conditions under which an agent can be deemed as morally blameworthy for their actions. Finally, we remark that the update procedure that we have introduced seems to have the potential to be applicable to other different epistemic or doxastic frameworks, thus deserving further investigation.

%
%
%
%
%
%
%
%
%
%
%
%
%
%

\appendix

\section{Appendix}

\subsection{Proof of Proposition \ref{dedt}}
\label{appDT}

\textbf{Proposition  \ref{dedt}.}

\begin{itemize}

\item[($DT$)] $\Gamma, \phi \vdash \psi$  iff $ \Gamma \vdash \phi \rightarrow \psi$

\end{itemize}

\begin{proof}

The direction from right to left is trivial. The direction from left to right can be proven in a standard way. Suppose that $\phi_{1}, ..., \phi_{n}$ is a derivation of $\psi$ from $\Gamma, \phi$. This means that $\phi_{n}$ is $\psi$ and that for each $\phi_{i}$ is either $\phi$, or is in $\Gamma$, or is an axiom, or is inferred by one of the rules of \textbf{LEI}. It is straightforward to prove by induction on $i$ that $\Gamma \vdash \phi \rightarrow \phi_{i}$ for each $\phi_{i}$. We provide only the case of ($IR$), leaving the other proofs for the reader.

Let $\phi_{i}$ be obtained by ($IR$); that is, it is of a form $\chi_{1} \rightarrow (I \chi_{2} \rightarrow I \chi_{1})$ and there is the following step in the derivation resulting in obtaining $\phi_{i}$:

\begin{itemize}

\item[(a)] $\vdash \chi_{1} \rightarrow \chi_{2}$

\item[(b)] $\vdash \chi_{1} \rightarrow (I \chi_{2} \rightarrow I \chi_{1})$ (from (a) by (IR))

\end{itemize}

Then, we can reason as follows:

\begin{enumerate}

\item $\Gamma \vdash \phi \rightarrow (\chi_{1} \rightarrow \chi_{2})$ (induction hypothesis)


\item $\Gamma \vdash \chi_{1} \rightarrow (I \chi_{2} \rightarrow I \chi_{1})$ (from (b), because if $\vdash \alpha$, then $\Gamma \vdash \alpha$ for any $\Gamma$)

\item $\Gamma \vdash (\chi_{1} \rightarrow (I \chi_{2} \rightarrow I \chi_{1})) \rightarrow ((\chi_{1} \rightarrow \chi_{2}) \rightarrow (\chi_{1} \rightarrow (I \chi_{2} \rightarrow I \chi_{1})))$ (axiom scheme $A12$)

\item $\Gamma \vdash (\chi_{1} \rightarrow \chi_{2}) \rightarrow (\chi_{1} \rightarrow (I \chi_{2} \rightarrow I \chi_{1}))$ (from ($MP$), 2, 3)

\item $\Gamma \vdash \phi \rightarrow (\chi_{1} \rightarrow (I \chi_{2} \rightarrow I \chi_{1}))$ (by ($Trans$), 1,4)

\end{enumerate}
\end{proof}

\subsection{Proofs of Propositions \ref{Arrow need-1}, \ref{Arrow need}, \ref{prop5}.}
\label{Fanprops}

\textbf{Proposition  \ref{Arrow need-1}.}

\begin{itemize}

\item[(T1)] $\neg \phi \rightarrow (\phi \rightarrow \psi)$

\end{itemize}

\begin{proof}

\
	
	\begin{enumerate}
			
			\item $\phi \wedge \neg \phi \vdash \psi$ (ECQ)
			
			\item $\neg \phi, \phi \vdash \psi$ (since, by (Adj) we would have the premise of step 1)
			
			\item $\vdash \neg \phi \rightarrow (\phi \rightarrow \psi)$ (from 2, by DT twice)
			
	\end{enumerate}

\end{proof}


\textbf{Proposition  \ref{Arrow need}.}

\begin{itemize}

\item[(T2)] $(\neg \phi \vee \psi) \rightarrow (\phi \rightarrow \psi)$

\end{itemize}

\begin{proof}

\
	
	\begin{enumerate}
			
			\item $\neg \phi \rightarrow (\phi \rightarrow \psi)$ (T1)
			
			\item $\psi \rightarrow (\phi \rightarrow \psi)$ (A12)
			
			\item $(\neg \phi \rightarrow (\phi \rightarrow \psi)) \wedge (\psi \rightarrow (\phi \rightarrow \psi))$ (from 1, 2 by (Adj))
			
			\item $((\neg \phi \rightarrow (\phi \rightarrow \psi)) \wedge (\psi \rightarrow (\phi \rightarrow \psi))) \rightarrow ((\neg \phi \vee \psi) \rightarrow (\phi \rightarrow \psi))  $ (instance of (A6))
			
			\item $(\neg \phi \vee \psi) \rightarrow (\phi \rightarrow \psi)$ (from 3, 4 and MP)
			
	\end{enumerate}

\end{proof}


\textbf{Proposition  \ref{prop5}.}

\begin{itemize}

\item[(R1)] from $\phi \rightarrow (\psi \rightarrow \chi)$ infer $(\phi \wedge \psi) \rightarrow \chi$

\end{itemize}

\begin{proof}

\
	
	\begin{enumerate}
			
			\item $\phi \rightarrow (\psi \rightarrow \chi)$ (assumption)
			
			\item $(\phi \wedge \psi) \rightarrow \phi$ (A1)
			
			\item $(\phi \wedge \psi) \rightarrow (\psi \rightarrow \chi)$ (from 1,2, and Trans)
			
			\item $(\phi \wedge \psi) \rightarrow \psi$ (A2)
			
			\item $(((\phi \wedge \psi) \rightarrow (\psi \rightarrow \chi)) \wedge ((\phi \wedge \psi) \rightarrow \psi)) \rightarrow ((\phi \wedge \psi)\rightarrow ((\psi \rightarrow \chi) \wedge \psi) ) $ (instance of (A3))
			
			\item $ ((\phi \wedge \psi) \rightarrow (\psi \rightarrow \chi)) \wedge ((\phi \wedge \psi) \rightarrow \psi)  $ (from 3,4 by Adj)
			
			\item $(\phi \wedge \psi)\rightarrow ((\psi \rightarrow \chi) \wedge \psi) $ (from 5,6 by MP)
			
			\item $ ((\psi \rightarrow \chi) \wedge \psi) \rightarrow \chi $ (A11)
			
			\item $(\phi \wedge \psi) \rightarrow \chi $ (from 7,8 by Trans)
			
	\end{enumerate}

\end{proof}

\subsection{Proof of Lemma \ref{sound}}
\label{AppA}

\textbf{Lemma  \ref{sound}.} The system \textbf{LEI} is sound with respect to the class of all frames.

\begin{proof}

The case for propositional principles is granted by the soundness result for \textbf{Lt2}  provided in \citep[Theorem 8.6]{Robles2019}. The cases of (\textit{fact}) and $(I \wedge)$ are the same as the cases $(Ie)$ and $(I \wedge)$ in the proof of Lemma 1 in \citep{Kubyshkina2019}. The following proofs rely on Def. \ref{frames}. In light of the equivalence of Def. \ref{frames1} and \ref{frames}, the reader can reconstruct the soundness proofs also in terms of valuational models as in Def. \ref{frames1}.

For ($emI$), assume that for some $(\mathcal{M}, w)$ $\mathcal{M}, w \not \models I \phi \vee \neg I \phi$. This means that (i) $\mathcal{M}, w \not \models I \phi$ and (ii) $\mathcal{M}, w \not \models \neg I \phi$. From (i) it follows that  (iii) there exists a world $w'$ that is not $w$ such that $Rww'$ and $\mathcal{M}, w' \models \phi$, or (iv) $\mathcal{M}, w \not \models \phi$. From (ii) we have (v)  for all $w''$ that are not $w$, if $Rww''$ then $\mathcal{M}, w'' \not \models \phi$, which contradicts (iii), and (vi) $\mathcal{M}, w \models \phi$, which contradicts (iv).

For $(IR)$, assume that $\mathcal{M} \models \phi \rightarrow \psi$; that is, (i) for all $w$ if $\mathcal{M}, w \models \phi$ then $\mathcal{M}, w \models \psi$, (ii) there is a world $w$ such that $\mathcal{M}, w \models \phi$, but (iii) $\mathcal{M}, w \not \models I \psi \rightarrow I \phi$.\footnote{The proof relies on Def.\ref{frames}, where the implicational cases rest on the observation that, although \textit{modus tollens} is not a valid principle in our framework, it does hold whenever the consequent $\psi$ in the assumption $\phi \rightarrow \psi$ is not true, that is, in other terms, $\mathcal{M}, w \not \models \psi$.} From (iii) we obtain (iv) $\mathcal{M}, w \models I \psi$ and (v) $\mathcal{M}, w \not \models I \phi$. From (iv), we have (vi) for all $w'$ such that $w'$ is not $w$, if $Rww'$, then $\mathcal{M}, w' \not \models \psi$. From (vi) and (i) we obtain (vii) for all $w'$ such that $w'$ is not $w$, if $Rww'$, then $\mathcal{M}, w' \not \models \phi$. From (v) we obtain that either (viii) there exists $w''$ such that $Rww''$ and $\mathcal{M}, w''\models \phi$, which contradicts (vii), or (ix) $\mathcal{M}, w \not \models \phi$, which contradicts (ii).

\end{proof}

\subsection{Proof of Lemma \ref{Ext}}
\label{appLemmaExt}

\textbf{Lemma \ref{Ext}.} Let $\phi \not \vdash \psi$, then there exists a consistent prime \textbf{LEI}-theory $\mathcal{T}$ such that $\phi \in \mathcal{T}$ and $\psi \not \in \mathcal{T}$.

\begin{proof}

We enumerate sentences $\phi_{1}, \phi_{2},...$ and then build up a series of theories starting with $\mathcal{T}_{0} = \{\psi': \phi \vdash \psi'\}$. $\mathcal{T}_{n+1}$ is obtained from $\mathcal{T}_{n}$ by adding $\phi_{n+1}$ if one can do so while closing the result under conjunction and implication without thereby getting $\psi$. $\mathcal{T}$ is obtained as the union of all the $\mathcal{T}_{n}$'s, and it is easy to see that it s closed under the principles of \textbf{LEI}. Thus, there exists a \textbf{LEI}-theory $\mathcal{T}$ such that $\phi \in \mathcal{T}$ and $\psi \not \in \mathcal{T}$. Now we show that $\mathcal{T}$ satisfies the property: if $\chi_{1} \in \mathcal{T}$ or $\chi_{2} \in \mathcal{T}$, then $\chi_{1} \vee \chi_{2} \in \mathcal{T}$. Assume that either $\chi_{1} \in \mathcal{T}$ or $\chi_{2} \in \mathcal{T}$. In both cases, by $(A4)$ or by $(A5)$ respectively, we have $\chi_{1} \vee \chi_{2} \in \mathcal{T}$. Thus, $\mathcal{T}$ is a \textbf{LEI}-theory.

To show that $\mathcal{T}$ is prime, assume that it is not, i.e., $\chi_{1} \vee \chi_{2} \in \mathcal{T}$ but $\chi_{1} \not \in \mathcal{T}$ and $\chi_{2} \not \in \mathcal{T}$. Then the theories obtained from $\mathcal{T} \cup \{\chi_{1}\}$ and $\mathcal{T} \cup \{ \chi_{2}\}$ must both contain $\psi$. It follows that there is a conjunction of members of $\mathcal{T}$, $\tau$ such that $\tau \wedge \chi_{1} \vdash \psi$ and $\tau \wedge \chi_{2} \vdash \psi$. By Deduction theorem, this means that $\vdash (\tau \wedge \chi_{1}) \rightarrow \psi$ and $\vdash (\tau \wedge \chi_{2}) \rightarrow \psi$. Then, by $(Adj)$ we get $\vdash (( \tau \wedge \chi_{1}) \rightarrow \psi) \wedge ((\tau \wedge \chi_{2}) \rightarrow \psi)$, and then, by $(A6)$, $\vdash ((\tau \wedge \chi_{1}) \vee (\tau \wedge \chi_{2})) \rightarrow \psi$. We have $\vdash (\tau \wedge (\chi_{1} \vee \chi_{2})) \rightarrow ((\tau \wedge \chi_{1}) \vee (\tau \wedge \chi_{2}))$ (Ax. scheme $(A7)$). Thus, by $(Trans)$ we have $\vdash (\tau \wedge (\chi_{1} \vee \chi_{2})) \rightarrow \psi$. Having in mind that $\tau \wedge (\chi_{1} \vee \chi_{2}) \in \mathcal{T}$, this means that $\psi \in \mathcal{T}$, which contradicts the construction of $\mathcal{T}$.

To show that $\mathcal{T}$ is consistent, assume that it is not. Then, there is $\chi$ such that both $\chi$ and $\neg \chi \in \mathcal{T}$. Then, by $(ECQ)$, $\psi \in \mathcal{T}$ which contradicts the construction of $\mathcal{T}$.

\end{proof}

\subsection{Proof of Lemma \ref{Truth}}
\label{appTruth1}

\textbf{Lemma \ref{Truth}.} For all formulas $\phi$, and all consistent prime \textbf{LEI}-theories $w$, 

\begin{center}

$\mathcal{M}^{C}, w \models \phi$ iff $\phi \in w$;

$\mathcal{M}^{C}, w \models \neg \phi$ iff $\neg \phi \in w$.

\end{center}

\begin{proof}

We prove the lemma by induction on the structure of $\phi$. Notice, that the case of $\neg \psi$ is treated inductively, similarly to semantic clauses for $\neg$ introduced in Definition \ref{frames}.

\textbf{Base case.} By definition of $v^{C}$, $p \in w$, s.t. $w \in W^{C}$ iff $w \in v^{C}(p)$, which is by semantics equivalent to $\mathcal{M}^{C}, w \models p$. In case of $\neg p$ the proof is the same.

\textbf{Induction step.}

\begin{enumerate}

\item $\neg \neg \psi \in w$ iff $\psi \in w$ (by $(A10)$) iff $\mathcal{M}^{C}, w \models \psi$ (induction hypothesis) iff $\mathcal{M}^{C}, w \models \neg \neg \psi$ (by Def. \ref{frames}).

\item $\psi \wedge \chi \in w$ iff $\psi \in w$ and $\chi \in w$ (because $w$ is a theory) iff $\mathcal{M}^{C}, w \models \psi$ and $\mathcal{M}^{C}, w \models \chi$ (induction hypothesis) iff $\mathcal{M}^{C}, w \models \psi \wedge \chi$ (by Def. \ref{frames}).

\item $\neg(\psi \wedge \chi) \in w$ iff  $\neg \psi \vee \neg \chi \in w$ (by $(A9)$) iff $\neg \psi \in w$ or $\neg \chi \in w$ (because $w$ is a prime theory) iff $\mathcal{M}^{C}, w \models \neg \psi$ or $\mathcal{M}^{C}, w \models \neg \chi$ (induction hypothesis) iff $\mathcal{M}^{C}, w \models \neg (\psi \wedge \chi)$ (by Def. \ref{frames}).

\item $\psi \rightarrow \chi \in w$ iff $\psi \in w$ implies $\chi \in w$ (because $w$ is a \textbf{LEI}-theory satisfying ($MP$)) iff $\mathcal{M}^{C}, w \models \psi$ implies $\mathcal{M}^{C}, w \models \chi$ (induction hypothesis) iff $\mathcal{M}^{C}, w \models \psi \rightarrow \chi$ (by Def. \ref{frames}).

\item ($\Rightarrow$) $\neg (\psi \rightarrow \chi) \in w$ implies that $\psi \rightarrow \chi \not \in w$ (because $w$ is consistent). This means that $\psi \in w$ and $\chi \not \in w$, because $w$ is a \textbf{LEI}-theory satisfying ($MP$). By induction hypothesis, this is equivalent to say that $\mathcal{M}^{C}, w \models \psi$ and $\mathcal{M}^{C}, w \not \models \chi$, that is $\mathcal{M}^{C}, w \not \models \psi \rightarrow \chi$, and thus, by Def. \ref{frames}, $\mathcal{M}^{C}, w \models \neg (\psi \rightarrow \chi)$.

\item[] ($\Leftarrow$) $\mathcal{M}^{C}, w \models \neg (\psi \rightarrow \chi)$ implies that $\mathcal{M}^{C}, w \not \models \psi \rightarrow \chi$. This means that $\mathcal{M}^{C}, w \models \psi$ and $\mathcal{M}^{C}, w \not \models \chi$. By induction hypothesis, $\psi \in w$ and $\chi \not \in w$. By $(A14)$, $\psi \rightarrow (\chi \vee \neg (\psi \rightarrow \chi)) \in w$. Thus, $\chi \vee \neg (\psi \rightarrow \chi) \in w$, and then $\neg(\psi \rightarrow \chi) \in w$, because $w$ is a prime theory.

\item Let $\phi = I \psi$. From the consistency of $w$ and from ($emI$), we have that $I \psi \in w$ iff $\neg I \psi \not \in w$. This means that it is sufficient to provide a standard proof of $\mathcal{M}^{C}, w \models I \psi$ iff $I\psi \in w$. 

\begin{itemize}

\item[($\Rightarrow$)] Suppose $I \psi \not \in w$. Because $w$ is a consistent prime theory, either $\psi \in w$, or $\neg \psi \in w$, or $\psi, \neg \psi \not \in w$. In the second and third cases $\psi \not \in w$ and thus, by induction hypothesis, $\mathcal{M}^{C}, w \not \models \psi$. Therefore, $\mathcal{M}^{C}, w \not \models I \psi$.

Let $\psi \in w$, and thus, by induction hypothesis, $\mathcal{M}^{C}, w \models \psi$. Every consistent prime theory can be extended to a maximal consistent prime theory (Lindenbaum's Lemma). Thus, for any $\chi$, such that $\chi, \neg \chi \not \in w$, either $\chi$ or $\neg \chi$ can be added to $w$ with respect to consistency of $w$. Thus, by the definition of $R^{C}$, it is enough to show that the set $\{\psi\} \cup \{\neg \chi | I \chi \in w\}$ is consistent. 

Suppose that the set is inconsistent. Then, there exist $\chi_{1}, ..., \chi_{n}$ such that $\vdash \neg (\neg \chi_{1} \wedge ... \wedge \neg \chi_{n} \wedge  \psi)$. By $(A9)$ this means that $\vdash \neg (\neg \chi_{1} \wedge ... \wedge \neg \chi_{n}) \vee \neg \psi$. By $(T2)$, we have $\vdash \psi \rightarrow \neg (\neg \chi_{1} \wedge ... \wedge \neg \chi_{n})$, that is $\vdash \psi \rightarrow (\chi_{1} \vee ... \vee \chi_{n})$ (by $(A9)$, $(A10)$ and $(Trans)$). From $(IR)$, we obtain $\vdash \psi \rightarrow (I(\chi_{1} \vee ... \vee \chi_{n}) \rightarrow I \psi)$, and thus $I(\chi_{1} \vee ... \vee \chi_{n}) \rightarrow I \psi \in w$ because $\psi \in w$. Having $I \chi_{i} \in w$ for each $\chi_{i}$, we have $(I \chi_{1} \wedge ... \wedge I \chi_{n}) \in w$. From ($I\wedge^{gen}$) we obtain $I(\chi_{1} \vee ... \vee \chi_{n}) \in w$. But this leads to a contradiction: $I \psi \in w$. 


The set $\{\psi\} \cup \{\neg \chi | I \chi \in w\}$ is consistent, and thus it is contained in some maximal consistent \textbf{LEI}-theory $w'$. Suppose that $ \{\neg \chi | I \chi \in w\}$ is non-empty. Thus, there exists some formula $\chi'$ such that $\neg \chi' \in  \{\neg \chi | I \chi \in w\}$ and so $I \chi' \in w$. From (\textit{fact}), $\chi' \in w$, and $\neg \chi' \in w'$. Thus, $w$ and $w'$ are not the same.

In the case of  $\{\neg \chi | I \chi \in w\}$ being empty, we recall that $w$ can be extended to a  maximal consistent \textbf{LEI}-theory. Let us denote the extension of $w$ to the maximal consistent theory by $w_{m}$. Then, there exists at least one propositional variable $p$ that does not occur in $\psi$ such that either $p \in w_{m}$ or $\neg p \in w_{m}$. If $p\in w_{m}$, then $\psi \wedge p \in w_{m}$, and thus $\{\psi \wedge p\}$ is consistent. However, this means that $\{\psi \wedge \neg p\}$ is also consistent: if not, then $\vdash \neg(\psi \wedge \neg p)$ and thus, by ($US$), $\vdash \neg (\psi \wedge \neg\neg p)$ which means that $\{\psi \wedge p\}$ and $w$ are inconsistent. Similarly, one can show that if $\neg p \in w_{m}$, then $\{\psi \wedge \neg p\}$ and $\{\psi \wedge \neg \neg p\}$ are both consistent with $w_{m}$, and thus with $w$. This means that there exists some $w'$ such that $\psi \in w'$ and $w'$ is not the world $w$. By induction hypothesis, we have $\mathcal{M}, w' \models \psi$ and $\mathcal{M}, w \models \psi$ for some $w'$ that is not $w$. Thus, $\mathcal{M}, w \not \models I \psi$.

\item[$(\Leftarrow)$] Let $I \psi \in w$. By definition of $R^{C}$ if $I \psi \in w$, then whenever $R^{C}ww'$, we have $\psi \not \in w'$, and thus $\mathcal{M}^{C}, w' \not \models \psi$. From $I \psi \in w$ we have $\psi \in w$, and thus $\mathcal{M}^{C}, w \models \psi$. This means that for all $w'$ that are not $w$ such that $R^{C}ww'$ we have $\mathcal{M}^{C}, w' \not \models \psi$ and $\mathcal{M}^{C}, w \models \psi$, which gives us $\mathcal{M}^{C}, w \models I \psi$ by definition of $I$.

\end{itemize}

We do not need to consider the case of $\phi = \neg I \psi$ since, in the presence of ($emI$), it is clear that $\mathcal{M}^{C}, w \models I \psi$ iff $\mathcal{M}^{C}, w \not \models \neg I \psi$ and that for any $\psi$ either $I \psi \in w$, or $\neg I \psi \in w$ (because $w$ is a \textbf{LEI}-theory).

\end{enumerate}

\end{proof}

\subsection{Proof of Theorem \ref{SoundUP}}
\label{appSound}

\textbf{Theorem \ref{SoundUP}.} The system \textbf{LEI$^{up}$} is sound with respect to the class of all frames.

\begin{proof}

\

\begin{itemize}

\item[$(AI)$] Assume that (i) $\mathcal{M}, w \models \phi$, and (ii) $\mathcal{M}, w \models I \neg \psi$, and (iii) $\mathcal{M}, w \models [\phi]I \psi$. From (ii), we have (iv) $\mathcal{M}, w \models \neg \psi$ and (v) for all $w'$ s.t. $w'$ is not $w$, if $Rww'$ then $\mathcal{M}, w' \not \models \neg \psi$. From (i) and (iii), it's easy to see that (vi) $\mathcal{M} \vert^{w}_{\phi}, w \models I \psi$, that is (vii) $\mathcal{M} \vert^{w}_{\phi}, w \models \psi$ and (viii) for each $w' \in \mathcal{M}\vert^{w}_{\phi}$ s.t. $w'$ is not $w$, if $Rww'$ then $\mathcal{M}\vert^{w}_{\phi}, w' \not \models \psi$. By the construction of $\mathcal{M}\vert^{w}_{\phi}$ and from (viii), it is obvious that (ix) for each $w' \in \mathcal{M}$ s.t. $w'$ is not $w$, if $Rww'$ then $\mathcal{M}, w' \not \models \psi$. From (v) and (ix) we have (x) for all $w' \in \mathcal{M}$ s.t. $w'$ is not $w$, if $Rww'$ then $\mathcal{M}, w' \not \models \psi \vee \neg \psi$. From (iv), we have (xi) $\mathcal{M}, w \models \psi \vee \neg \psi$, and thus, by (x) and (xi), we have $\mathcal{M}, w \models I (\psi \vee \neg \psi)$.

\item[$(dA\rightarrow)$]  ($\Rightarrow$) Assume that (i) $\mathcal{M}, w \models [\phi](\psi \rightarrow \chi)$ and (ii) $\mathcal{M}, w \models [\phi]\psi$. From (i), we obtain (iii) if $Cn(\phi) \cup \{\gamma \mid \mathcal{M}, w \models \gamma\}$ is consistent, then $\mathcal{M} \vert^{w}_{\phi}, w \models \psi \rightarrow \chi$. Similarly, from (ii), we have that (iv) if $Cn(\phi) \cup \{\gamma \mid \mathcal{M}, w \models \gamma\}$ is consistent, then $\mathcal{M}\vert^{w}_{\phi}, w \models \psi$. From (iii) and (iv), we have that (v) if $Cn(\phi) \cup \{\gamma \mid \gamma \in w\}$ is consistent, then $\mathcal{M}\vert^{w}_{\phi}, w \models \chi$, that is, by definition, $\mathcal{M}, w \models [\phi] \chi$.

\item[] ($\Leftarrow$) Assume that (i) $\mathcal{M}, w \not \models [\phi] (\psi \rightarrow \chi)$. Then, we have (ii) $Cn(\phi) \cup \{\gamma \mid \mathcal{M}, w \models \gamma\}$ is consistent and (iii) $\mathcal{M}\vert^{w}_{\phi}, w \not \models \psi \rightarrow \chi$. This means that (iv) $\mathcal{M}\vert^{w}_{\phi}, w \models \psi$ and (v) $\mathcal{M} \vert^{w}_{\phi}, w \not \models \chi$. From (ii) and (iv), we have (vi). $\mathcal{M}, w \models [\phi] \psi$. From (ii) and (v), we have (vii) $\mathcal{M}, w \not \models [\phi] \chi$. Thus, from (vi) and (vii), we have $\mathcal{M}, w \not \models [\phi]\psi \rightarrow [\phi] \chi$.

\item[$(nI)$] Assume that $\mathcal{M}, w \not \models [\phi] \neg I \phi$. This means that $Cn(\phi) \cup \{ \gamma \mid \mathcal{M}, w \models \gamma\}$ is consistent and $\mathcal{M}\vert^{w}_{\phi}, w \not \models \neg I \phi$, that is $\mathcal{M}\vert^{w}_{\phi}, w  \models I \phi$. This last means that for all $w'$ s.t. $w'$ is not $w$, if $Rww'$ (in $\mathcal{M} \vert^{w}_{\phi}$) then $\mathcal{M} \vert^{w}_{\phi}, w' \not \models \phi$. However, by construction of $\mathcal{M} \vert^{w}_{\phi}$, there exists $w^{w}_{\phi}$ s.t. $Rww^{w}_{\phi}$ and $\mathcal{M} \vert^{w}_{\phi}, w^{w}_{\phi} \models \phi$, which is a contradiction.

\item[$(nA1)$] Assume that (i) $\mathcal{M}, w \models \neg [\phi] \psi$ and (ii) $\mathcal{M}, w \not \models [\phi] (\psi \rightarrow (p \wedge \neg p))$. From (i), we have (iii) $Cn(\phi) \cup \{\gamma \mid \mathcal{M}, w \models \gamma\}$ is consistent and $\mathcal{M}\vert^{w}_{\phi}, w \not \models \psi$. Similarly, from (ii) we have (iv) $Cn(\phi) \cup \{\gamma \mid \mathcal{M}, w \models \gamma\}$ is consistent and $\mathcal{M}\vert^{w}_{\phi}, w \not \models \psi \rightarrow (p \wedge \neg p)$. From (iv) we have that (v) $\mathcal{M} \vert^{w}_{\phi}, w \models \psi$, which contradicts (iii).

\item[$(nA2)$] Assume that $\mathcal{M}, w \models \neg \phi$ and $\mathcal{M}, w \not \models [\phi] (p \wedge \neg p)$. This last means that $Cn(\phi) \cup \{ \gamma \mid \mathcal{M}, w \models \gamma\}$ is consistent, which is not possible because of $\mathcal{M}, w \models \neg \phi$.

\item[$(dA\vee)$] Let $\mathcal{M}, w \models [\phi] (\psi \vee \chi)$ for an arbitrary $w$. This is equivalent to the fact that if $Cn(\phi) \cup \{\gamma \mid \mathcal{M}, w \models \gamma\}$ is consistent, then $\mathcal{M} \vert^{w}_{\phi}, w \models \psi \vee \chi$. This is also equivalent to the fact that  if $Cn(\phi) \cup \{\gamma \mid \mathcal{M}, w \models \gamma\}$ is consistent, then $\mathcal{M} \vert^{w}_{\phi}, w \models \psi$ or $\mathcal{M} \vert^{w}_{\phi}, w \models \chi$. The last one is also equivalent to $\mathcal{M}, w \models [\phi] \psi \vee [\phi] \chi$.

\item[$(emA)$] Assume that (i) $\mathcal{M}, w \not \models [\phi]\psi \vee \neg [\phi] \psi$. This means that (ii) $\mathcal{M}, w \not \models [\phi] \psi$ and (iii) $\mathcal{M}, w \not \models \neg [\phi] \psi$. From (ii), by definition, we have (iv) $\mathcal{M}, w \models \neg [\phi]\psi$, which contradicts to (iii).

\item[$(INV)$] By construction of the new model $\mathcal{M}\vert^{w}_{\phi}$ with respect to a world $\phi \in \mathcal{M}$, the world $w$ in both $\mathcal{M}$ and $\mathcal{M}\vert^{w}_{\phi}$ has the same atomic propositions. The creation of the new world and adding the new accessibility relations impact only the value of modal formulas in $w$, but not the propositional content.

\item[$(nAp1)$] Assume that (i) $\mathcal{M}, w \models \neg [\phi] \neg p$. Then, (ii) $Cn(\phi) \cup \{\gamma \mid \mathcal{M}, w \models \gamma\}$ is consistent and (iii) $\mathcal{M} \vert^{w}_{\phi}, w \not \models \neg p$. The step (iii) admits two possibilities: either (iv) $\mathcal{M} \vert^{w}_{\phi}, w \models p$, or (v) $\mathcal{M}\vert^{w}_{\phi} \not \models p$. If (iv), then we have $\mathcal{M}, w \models p$ (by the same reason that $(INV)$ holds), and thus $\mathcal{M}, w \models p \vee [\phi](p \rightarrow (q \wedge \neg q))$. If (v), then we have, by our truth conditions for implication, (vi) $\mathcal{M} \vert^{w}_{\phi}, w \models p \rightarrow (q \wedge \neg q)$, that is $\mathcal{M}, w \models [\phi](p \rightarrow (q \wedge \neg q))$, and thus $\mathcal{M}, w \models p \vee [\phi](p \rightarrow (q \wedge \neg q))$.

\item[$(nAp2)$] The reasoning is similar to the case of $(nAp1)$.

\item[$(uA)$] Assume that $\mathcal{M}, w \models [\phi] \psi \wedge [\phi] \neg \psi$. This is possible only in case that if $Cn(\phi) \cup \{\gamma \mid \mathcal{M}, w \models \gamma\}$ is consistent then $\mathcal{M}\vert^{w}_{\phi}, w \models \psi$ and $\mathcal{M}\vert^{w}_{\phi}, w \models \neg \psi$. By definitions of operators and of $\mathcal{M}\vert^{w}_{\phi}$, this means that $Cn(\phi) \cup \{\gamma \mid \mathcal{M}, w \models \gamma\}$ is inconsistent, and thus $\mathcal{M}, w \not \models \phi$. Then, by the definition of implication, $\mathcal{M}, w \models \phi \rightarrow (p \wedge \neg p)$.

\item[$(nec)$] Assume that (i) $\mathcal{M} \models \phi$ for all $\mathcal{M}$ (i.e., $\phi$ is valid). Let (ii) $\mathcal{M}, w \not \models [\psi] \phi$ for some formula $\psi$. From (ii), we have (iii) $Cn(\psi) \cup \{\gamma \mid \mathcal{M}, w \models \gamma \}$ is consistent and (iv) $\mathcal{M} \vert^{w}_{\psi}, w \not \models \phi$, which contradicts to (i).

\item[$(intA1)$] Assume that (i) $\mathcal{M} \models \phi \rightarrow \psi$ for all $\mathcal{M}$ (i.e., $\phi \rightarrow \psi$ is valid). Let (ii) $\mathcal{M}, w \not \models [\phi] \neg I \psi$. From (ii) it follows that (iii) $Cn(\phi) \cup \{\gamma \mid \mathcal{M}, w \models \gamma\}$ is consistent and (iv) $\mathcal{M} \vert^{w}_{\phi}, w \not \models \neg I \psi$, that is (v) $\mathcal{M} \vert^{w}_{\phi}, w  \models  I \psi$. From (v), we have (vi) $\mathcal{M} \vert^{w}_{\phi}, w \models \psi$ and (vii) for all $w'$ s.t. $w'$ is not $w$, if $Rww'$ then $\mathcal{M} \vert^{w}_{\phi}, w \not \models \psi$. However, because of (i), we know that $\psi \in Cn(\phi)$, and, by construction of $\mathcal{M}\vert^{w}_{\phi}$, $\psi$ belongs to (at least) one world distinct from $w$ in the updated model, which leads to a contradiction.

\item[$(intA2)$] Assume that (i) $\mathcal{M} \models (\phi \wedge \psi) \rightarrow \chi$ for all $\mathcal{M}$ (i.e., $(\phi \wedge \psi) \rightarrow \chi$ is valid). Also, assume that, for some $w \in \mathcal{M}$, (ii) $\mathcal{M}, w \not \models (\psi \wedge \neg I \psi) \rightarrow [\phi]\neg I \chi$. From (ii), we have (iii) $\mathcal{M}, w \models \psi \wedge \neg I \psi$ and (iv) $\mathcal{M}, w \not \models [\phi]\neg I \chi$. From (iv), applying Definition \ref{upd-1}, it follows that (v)  $Cn(\phi) \cup \{\gamma \mid \mathcal{M}, w \models \gamma \}$ is consistent, and also (vi) $\mathcal{M}\vert^{w}_{\phi}, w \not \models \neg I \chi$. From (vi), we obtain (vii) $\mathcal{M}\vert^{w}_{\phi}, w \models  I \chi$, which entails (viii) $\mathcal{M}\vert^{w}_{\phi}, w \models  \chi$ and (ix) for each $w' \neq w$ s.t. $Rww'$ where $R \in \mathcal{M}\vert^{w}_{\phi}$, $\mathcal{M}\vert^{w}_{\phi}, w' \not \models  \chi$. However, from (i) and (iii), we know that (x) $\chi \in Cn (Cn(\phi) \cup \{\gamma \mid \mathcal{M}, w \models \gamma  \wedge \neg I \gamma \})$, and, by construction of $\mathcal{M}\vert^{w}_{\phi}$, we know that (xi) there is (at least) one world $w'$ s.t. $w' \neq w$ and $Rww'$  where $R \in \mathcal{M}\vert^{w}_{\phi}$, and $\mathcal{M}\vert^{w}_{\phi}, w' \models \chi$, which contradicts (ix).

\item[$(CN)$] Assume that (i) $\mathcal{M}, w \models I \psi \wedge \neg [\phi]I \psi$, where the set of all true propositions in $w$ is $\Gamma$. Let (ii) $\mathcal{M}, w' \not \models \psi$, where the set of all true propositions in $w'$ is $Cn(\phi) \cup \Gamma' = \{\chi \mid \mathcal{M}, w \models \chi  \wedge \neg I \chi\}$. From (i) we have that (iii) for all $w''$ s.t. $w''$ is not $w$ if $Rww''$ then $\mathcal{M}, w'' \not \models \psi$, (iv) $Cn(\phi) \cup \{ \Gamma\}$ is consistent, and (v) $\mathcal{M}\vert^{w}_{\phi}, w \models \neg I \psi$. Consider the world $w'$ described in the condition (ii), it is such that it only validates propositions contained in $Cn(\phi) \cup \{\chi \mid \mathcal{M}, w \models \chi  \wedge \neg I \chi\}$, that is the definition of the world $w^{w}_{\phi} \in \mathcal{M}\vert^{w}_{\phi}$. From (ii) and the definition of $w^{w}_{\phi}$ we have that (vi) $\mathcal{M} \vert^{w}_{\phi}, w^{w}_{\phi} \not \models \psi$. From (v) we have that either (vii) $\mathcal{M}\vert^{w}_{\phi}, w \not \models \psi$, or (viii) there exists $w'''$ s.t. $w'''$ is not $w$, $Rww'''$ (in $\mathcal{M}\vert^{w}_{\phi}$), and $\mathcal{M}\vert^{w}_{\psi}, w''' \models \psi$. By the construction of $\mathcal{M}\vert^{w}_{\phi}$, the case (viii) contradicts (iii) and (vi). The case (vii) is possible only if $w^{w}_{\phi}$ contains new information about the value of $\psi$ (because if $\psi$ does not contain modal operators, its value remains the same, and if it contains modalities, its value depends on its value in the accessible worlds). However, from (i) we have $\mathcal{M}, w \models \psi$, which means that $\neg \psi \not \in Cn(\phi) \cup \{ \chi \mid \mathcal{M}, w \models \chi  \wedge \neg I \chi\}$ (and thus $\mathcal{M} \vert^{w}_{\phi}, w^{w}_{\phi} \not \models \neg \psi$). This last observation, taken together with (vii), indicates that the accessibility of $w^{w}_{\phi}$ from $w$ does not change the value of $\psi$, which contradicts (vii).

\end{itemize}

\end{proof}

\subsection{Proof of Proposition \ref{prop10}}
\label{appProp10}

\textbf{Proposition \ref{prop10}.} The following is a theorem of \textbf{LEI$^{up}$}:

\begin{itemize}

\item[$(dA\wedge)$] $ [\phi](\psi \wedge \chi) \leftrightarrow ([\phi]\psi \wedge [\phi]\chi)$

\end{itemize}

\begin{proof}

\

\begin{enumerate}

\item $(\psi \wedge \chi) \rightarrow \psi$ (from ($A1$))

\item $(\psi \wedge \chi) \rightarrow \chi$ (from ($A2$))

\item $[\phi] ((\psi \wedge \chi) \rightarrow \psi)$ (from 1 by ($nec$))

\item $[\phi] ((\psi \wedge \chi) \rightarrow \psi) \rightarrow ([\phi] (\psi \wedge \chi) \rightarrow [\phi] \psi)$ (from ($dA\rightarrow$))

\item $[\phi] (\psi \wedge \chi) \rightarrow [\phi] \psi$ (from 3, 4 by ($MP$))

\item $[\phi] ((\psi \wedge \chi) \rightarrow \chi)$ (from 2 by ($nec$))

\item $[\phi] ((\psi \wedge \chi) \rightarrow \chi) \rightarrow ([\phi] (\psi \wedge \chi) \rightarrow [\phi] \chi)$ (from ($dA\rightarrow$))

\item $[\phi] (\psi \wedge \chi) \rightarrow [\phi] \chi$ (from 6,7 by ($MP$))

\item $[\phi] (\psi \wedge \chi) \rightarrow ([\phi] \psi \wedge [\phi] \chi)$ (from ($A3$), 5, 8 by ($MP$))

\item $\psi \rightarrow (\chi \rightarrow (\psi \wedge \chi))$ (from ($Adj$) by Deduction Theorem)

\item $[\phi](\psi \rightarrow (\chi \rightarrow (\psi \wedge \chi)))$ (from 10 by ($nec$))

\item $[\phi]\psi \rightarrow [\phi] (\chi \rightarrow (\psi \wedge \chi))$ (from ($dA \rightarrow$), 11 by ($MP$))

\item $[\phi] (\chi \rightarrow (\psi \wedge \chi)) \rightarrow ([\phi] \chi \rightarrow [\phi] (\psi \wedge \chi))$ (from ($dA \rightarrow$))

\item $[\phi]\psi \rightarrow ([\phi] \chi \rightarrow [\phi] (\psi \wedge \chi))$ (from 12, 13 by ($Trans$))

\item $([\phi]\psi \wedge [\phi] \chi) \rightarrow [\phi] (\psi \wedge \chi))$ (from 14 by ($R1$))

\item $[\phi] (\psi \wedge \chi) \leftrightarrow ([\phi] \psi \wedge [\phi] \chi)$ (from 9, 15, ($Adj$), definition of $\leftrightarrow$) 

\end{enumerate}

\end{proof}

\subsection{Proof of Proposition \ref{intA2gen}}
\label{appintA2gen}

\textbf{Proposition \ref{intA2gen}}. For all $n \geq 1$:

\begin{itemize}

\item[$(intA2)^{gen}$] From $\vdash (\phi \wedge \psi_{1} \wedge  ... \wedge \psi_{n}) \rightarrow \chi$ infer 

$\vdash (\psi_{1} \wedge \neg I \psi_{1} \wedge ... \wedge \psi_{n} \wedge \neg I \psi_{n}) \rightarrow [\phi] \neg I \chi$.

\end{itemize}

\begin{proof}

We prove the proposition by induction on $n$.

\textbf{Basic step.} From $\vdash (\phi \wedge \psi_{1}) \rightarrow \chi$ infer $\vdash (\psi_{1} \wedge \neg I \psi_{1}) \rightarrow [\phi] \neg I \chi$. This is obtained by an instantiation into the rule $(intA2)$.

\textbf{Inductive step.} Assume by induction hypothesis (IH) that the proposition holds for $n=k$. We show that:

From $\vdash (\phi \wedge \psi_{1} \wedge  ... \wedge \psi_{k+1}) \rightarrow \chi$ infer 

$\vdash (\psi_{1} \wedge \neg I \psi_{1} \wedge ... \wedge \psi_{k+1} \wedge \neg I \psi_{k+1}) \rightarrow [\phi] \neg I \chi$.

\begin{enumerate}

\item $\vdash (\phi \wedge \psi_{1} \wedge  ... \wedge \psi_{k+1}) \rightarrow \chi$ (assumption)

\item From $\vdash (\phi \wedge \psi_{1}  ... \wedge \psi_{k}) \rightarrow \chi$ infer  $\vdash (\psi_{1} \wedge \neg I \psi_{1} \wedge ... \wedge \psi_{k} \wedge \neg I \psi_{k}) \rightarrow [\phi] \neg I \chi$ (IH)

\item From $\psi_{k+1}  \vdash (\phi \wedge \psi_{1}  \wedge ... \wedge \psi_{k}) \rightarrow \chi$ infer  $\psi_{k+1} \vdash (\psi_{1} \wedge \neg I \psi_{1} \wedge ... \wedge \psi_{k} \wedge \neg I \psi_{k}) \rightarrow [\phi] \neg I \chi$ (because whenever from $\vdash A$ infer $\vdash B$, one can add supplementary premises, that is, from $C\vdash A$ infer $C\vdash B$)

\item From $\psi_{k+1}  \vdash (\phi \wedge \psi_{1}  \wedge ... \wedge \psi_{k}) \rightarrow \chi$ infer  $\psi_{k+1} \wedge \neg I \psi_{k+1}\vdash (\psi_{1} \wedge \neg I \psi_{1} \wedge ... \wedge \psi_{k} \wedge \neg I \psi_{k}) \rightarrow [\phi] \neg I \chi$ (because if $A \vdash B$, then $A \wedge C \vdash B$),

\item From $\phi \wedge \psi_{1} \wedge ... \wedge \psi_{k+1} \vdash  \chi$ infer $\psi_{1} \wedge \neg I \psi_{1} \wedge ... \wedge \psi_{k+1} \wedge \neg I \psi_{k+1}\vdash  [\phi] \neg I \chi$ (from $(DT)$ and $(Adj)$ applied to premises)

\item From $\vdash (\phi \wedge \psi_{1}  \wedge ... \wedge \psi_{k+1}) \rightarrow \chi$ infer $\vdash (\psi_{1} \wedge \neg I \psi_{1} \wedge ... \wedge \psi_{k+1} \wedge \neg I \psi_{k+1}) \rightarrow [\phi] \neg I \chi$ (from $(DT)$)

\item $\vdash (\psi_{1} \wedge \neg I \psi_{1} \wedge ... \wedge \psi_{k+1} \wedge \neg I \psi_{k+1}) \rightarrow [\phi] \neg I \chi$ (from 1 and 5)

\end{enumerate}

\end{proof}

\subsection{Proof of Lemma \ref{u-R eqv}}
\label{appL4}

\textbf{Lemma \ref{u-R eqv}.} If for all $w''$ in $\mathcal{M}$: $\mathcal{M}^{-}, w'' \models \phi \Leftrightarrow \mathcal{M}, w'' \models^{+} \phi$, and $R^{\psi}ww'$, then

\begin{center}

$\mathcal{M}^{-}\vert^{w}_{\psi}, w \models \phi$ iff $\mathcal{M},w' \models^{+} \phi$.

\end{center}

\begin{proof}

Let us consider the point $(\mathcal{M}^{-}\vert^{w}_{\psi}, w)$. 
By definition of $\mathcal{M}^{-}\vert^{w}_{\psi}$ for any literal $p$ or $\neg p$ (let us call it $p^{*}$), $\mathcal{M}^{-}\vert^{w}_{\psi}, w \models p^{*}$ iff $\mathcal{M}^{-}, w \models p^{*}$. Thus, in accordance with our assumption that for any $w'$ in $\mathcal{M}$ we have $\mathcal{M}^{-}, w' \models \phi \Leftrightarrow \mathcal{M}, w' \models^{+} \phi$, by (Inv-p) and (Inv-n) we have $\mathcal{M}^{-}\vert^{w}_{\psi}, w  \models p^{*}$ iff $\mathcal{M}, w' \models^{+} p^{*}$. Now let us assure that points $(\mathcal{M}^{-}\vert^{w}_{\psi}, w)$ and $(\mathcal{M}, w')$ access exactly the same information. By definition of $\mathcal{M}^{-}\vert^{w}_{\psi}$ we have that (1) $Rww_{1}$ in $\mathcal{M}^{-}\vert^{w}_{\psi}$ for all $w_{1}$ such that $Rww_{1}$ in $\mathcal{M}^{-}$, (2) there exists $w^{w}_{\psi} = Cn(\psi) \cup \{\chi \mid \mathcal{M}, w \models \chi  \wedge \neg I \chi\}$ where $w \in \mathcal{M}^{-}$ such that for $w \in \mathcal{M}^{-}\vert^{w}_{\psi}$ we have $Rww^{w}_{\psi}$ and (3) for all $w_{2}$ if $Rww_{2}$ in $ \mathcal{M}^{-}$, then $Rw^{w}_{\psi}w_{2}$. In accordance with our assumption that for all $w'$ in $\mathcal{M}$: $\mathcal{M}^{-}, w' \models \phi \Leftrightarrow \mathcal{M}, w' \models^{+} \phi$ and that $R^{\psi}ww'$, by (Pr1) we have the same conditions for $(\mathcal{M}, w')$: (1*) $Rw'w''$ for all $w''$ such that $Rww''$, (2*) there exists $w^{*} = Cn(\psi) \cup \{\chi \mid \mathcal{M}, w \models \chi  \wedge \neg I \chi\}$ such that $Rw'w^{*}$ (notice that $w^{*} = w^{w}_{\psi}$), and (3*) for all $w'''$ if $Rww'''$, then $Rw^{*}w'''$. Thus, the point $(\mathcal{M}, w')$ accesses all the worlds accessible from the point $(\mathcal{M}^{-}\vert^{w}_{\psi}, w)$. The property (Pr2) assures that the point $(\mathcal{M}, w')$ does not access any world which validates a formula which is not contained either in $w^{*}$, or in a world accessible from $w$, that is, the point $(\mathcal{M}, w')$ accesses only information available in the point $(\mathcal{M}^{-}\vert^{w}_{\psi}, w)$. Thus, $\mathcal{M}^{-}\vert^{w}_{\psi}, w \models \phi$ iff $\mathcal{M},w' \models^{+} \phi$.

\end{proof}

\subsection{Proof of Theorem \ref{equiv ext}}
\label{appT12}

\textbf{Theorem \ref{equiv ext}.} For any $w \in W$,

$\mathcal{M}^{-}, w \models \phi$ iff $\mathcal{M}, w \models^{+} \phi$.

\begin{proof}

We prove this by induction on the length of a formula. The cases of atomic propositions, non-modal operators, and $I$ operator are trivial, because the Definitions \ref{frames} and \ref{ext model def sat} for these terms are the same. We thus show only that $\mathcal{M}^{-}, w \models [\phi]\psi$ iff $\mathcal{M}, w \models^{+} [\phi]\psi$.

\begin{itemize}

\item[($\Rightarrow$)] Let $\mathcal{M}, w \not \models^{+} [\phi] \psi$. Then, $R^{\phi}ww'$ and $\mathcal{M}, w' \not \models^{+} \psi$.  By (Func) we have that $Cn(\phi) \cup \{\chi \mid \mathcal{M}, w \models \chi\}$ is consistent. By Lemma \ref{u-R eqv}, taking into account the induction hypothesis, we obtain $\mathcal{M}^{-}\vert^{w}_{\phi}, w \not \models \psi$, which means that $\mathcal{M}^{-}, w \not \models [\phi] \psi$.

\item[($\Leftarrow$)] Let $\mathcal{M}^{-}, w \not \models [\phi] \psi$; that is,  $Cn(\phi) \cup \{\chi \mid \mathcal{M}, w \models \chi\}$ is consistent and $\mathcal{M}^{-}\vert^{w}_{\phi}, w \not \models \psi$. From the consistency of $Cn(\phi) \cup \{\chi \mid \mathcal{M}, w \models \chi\}$ and from (Func), we have $R^{\phi}ww'$ for a unique $w'$. By Lemma \ref{u-R eqv}, and in accordance with the induction hypothesis, we have $\mathcal{M}, w' \not \models^{+} \psi$, which means that $\mathcal{M}, w \not \models^{+} [\phi]\psi$.

\end{itemize}
\end{proof}

\subsection{Proof of Lemma \ref{ext truth lemma}}
\label{appTruth}

\textbf{Lemma \ref{ext truth lemma}.} For all formulas $\phi \in \mathcal{L}^{up}$, and all consistent prime $LEI$-theories $w$,

\begin{center}

$\mathcal{M}^{C}, w \models^{+} \phi \Leftrightarrow \phi \in w$

and

$\mathcal{M}^{C}, w \models^{+} \neg \phi \Leftrightarrow \neg \phi \in w$

\end{center}

\begin{proof}

To simplify the reading of the proof, we omit the superscripts `C' and `+'.

The cases of propositional operators and $I$ operator are the same as in the proof of Lemma \ref{Truth}, except that we replace the use of ($US$) by ($US^{up}$). The case for formulas of a form $[\phi]\psi$ follows.

Bearing in mind that for any $\phi$ and $\psi$,  $\mathcal{M}, w \models [\phi]\psi$ iff $\mathcal{M}, w \not \models \neg [\phi]\psi$ and that $[\phi]\psi \in w$ iff $\neg [\phi]\psi \not \in w$, it is sufficient to show that $\mathcal{M}, w \models [\phi] \psi \Leftrightarrow [\phi]\psi \in w$.

\begin{itemize}

\item[($\Rightarrow$)] Suppose that $[\phi]\psi \not \in w$. We need to show that there exists $w'$ such that $R^{\phi} ww'$ and $\psi \not \in w'$.

As before, every consistent prime theory can be extended to a maximal consistent prime theory (Lindenbaum's Lemma). Thus, for any $\chi$, such that $\chi, \neg \chi \not \in w$, either $\chi$ or $\neg \chi$ can be added with respect to consistency of $w$. Thus, by the definition of $R^{\phi}$, we need to show that the set $\{\neg \psi\} \cup \{\chi \mid [\phi]\chi \in w\}$ is consistent. 

The proof of the consistency of $\{\neg \psi\} \cup \{\chi \mid [\phi]\chi \in w\}$ is straightforward. Suppose that the set is inconsistent, then there exist $\chi_{1}, ..., \chi_{n}$ such that $\vdash (\chi_{1} \wedge ... \wedge \chi_{n}) \rightarrow \psi$. By ($nec$) we have $\vdash [\phi]((\chi_{1} \wedge ... \chi_{n}) \rightarrow \psi)$. By Proposition \ref{gen!} and construction of $\chi_{i}$, we have $[\phi] (\chi_{1} \wedge ... \wedge \chi_{n}) \in w$. By axiom scheme ($dA \rightarrow$) and ($MP$) we have thus $[\phi]\psi \in w$, which contradicts our assumption.

Let us now show that there exists $w'' = \{\phi\} \cup \{\chi_{1} \mid \chi_{1} \wedge \neg I \chi_{1} \in w\}$ such that  $Rw'w''$, where $w'$ is the superset of $\{\neg \psi\} \cup \{\chi \mid [\phi]\chi \in w\}$, that is to show that for any formula $\alpha$ if $I\alpha \in w'$ then $\alpha \not \in w''$. Suppose that $\alpha \in w''$. Thus, $\alpha$ is a consequence of $\phi \wedge \chi_{1}$ for some $\chi_{1} \in w''$; that is, $\vdash (\phi \wedge \chi_{1}) \rightarrow \alpha$.  Let $\chi_{1} \wedge \neg I \chi_{1} \in w$. By rule ($intA2$), we have $[\phi] \neg I \alpha \in w$, which implies $I \alpha \not \in w'$. If there is no $\chi_{1}$ such that $\chi_{1} \wedge \neg I \chi_{1} \in w$, then, by construction of $w''$, $\alpha$ is a consequence of $\phi$. Thus, by $(intA1)$, $[\phi] \neg I \alpha \in w$, which implies $I \alpha \not \in w'$.

Thus, we have constructed a point $w'$, such that $R^{\phi}ww'$ and $\neg \psi \in w'$, which means that $\mathcal{M}, w \not \models [\phi]\psi$.

\item[($\Leftarrow$)] Let $[\phi]\psi \in w$. By definition of $R^{\phi}$, for all $\phi$ and $\psi$: if $[\phi]\psi \in w$, then ($\psi \in w'$ and there exists $w''$ such that $Rw'w''$ and $w'' = \{\phi \} \cup \{ \chi_{1} \mid \chi_{1} \wedge \neg I \chi_{1} \in w\}$). By induction hypothesis, this means that for all $w'$ if $R^{\phi}ww'$ then $\mathcal{M}, w' \models \psi$; that is, $\mathcal{M}, w \models [\phi]\psi$.

\end{itemize}
\end{proof}

\subsection{Proofs of Propositions \ref{s=s'} --  \ref{Prop19}.}
\label{appMain}

\textbf{Proposition \ref{s=s'}}. For any $w$ in $\mathcal{M}^{C}$, if $R^{\phi C}ww'$, then for all $p \in Prop$: 

\begin{center}

$p \in w \Leftrightarrow  p \in w'$;

and

$\neg p \in w \Leftrightarrow \neg p \in w'$. 

\end{center}

\begin{proof}

Let $p \in w$. Then, by $(INV)$ we have $[\phi] p \in w$, and thus, by definition of $R^{\phi C}$, $p \in w'$. The case of  $\neg p \in w$ is similar.

Let $p \in w'$. This means that $\neg p \not \in w'$, that is, by definition of $R^{\phi C}$, $\neg [ \phi ] \neg p \in w$. By $(nAp1)$, we get $p \vee [\phi](p \rightarrow (q \wedge \neg q)) \in w$. Then, either $p \in w$, or $[\phi](p \rightarrow (q \wedge \neg q)) \in w$. In the second case, $p \rightarrow (q \wedge \neg q) \in w'$. By our assumption that $p \in w'$, we get $q \wedge \neg q \in w'$ which contradicts the consistency of $w'$. Thus, $p \in w$. The case of $\neg p \in w'$ can be obtained similarly by using ($nAp2$).

\end{proof}

\textbf{Proposition \ref{at most 1}}. For any $w$ in $\mathcal{M}^{C}$, $w$ has at most one $\phi$-successor.

\begin{proof}

Suppose that $w$ has two different $\phi$-successors $w'$ and $w''$. Because $w'$ and $w''$ are different, then there exists a $\psi$ such that $\psi \in w'$ and $\psi \not \in w''$, then, by definition of $R^{\phi C}$ we have $\neg [\phi] \psi \in w$. By axiom scheme ($nA1$) we have thus $[\phi](\psi \rightarrow (p \wedge \neg p)) \in w$. From $R^{\phi C} ww'$ we have $\psi \rightarrow (p \wedge \neg p) \in w'$, from which by ($MP$) we obtain $p \wedge \neg p \in w'$, which is a contradiction. Thus, for any $w$ in $\mathcal{M}^{C}$, $w$ has at most one $\phi$-successor.

\end{proof}

\textbf{Proposition \ref{a unique succ}.} For any formula $\phi$: if $\{\phi\} \cup \{\chi \mid \chi  \in w\}$ is consistent, where $w$ is a consistent prime theory, then $w$ must have a unique $\phi$-successor $w'$, such that $w' = \{\psi \mid [\phi]\psi \in w\}$ and there exists $w'' = \{\phi\} \cup \{\chi \mid \neg I \chi \wedge \chi \in w\}$ such that if $I \chi_{1} \in w'$ then $\chi_{1} \not \in w''$. If $\{\phi\} \cup \{ \chi \mid \chi  \in w\}$ is inconsistent, then $w$ does not have any $\phi$-successor.

\begin{proof}

First, we show that if $\{\phi\} \cup \{\chi \mid \chi \in w\}$ is consistent then $w$ must have a $\phi$-successor $w'$, such that $w' = \{\psi \mid [\phi]\psi \in w\}$; that is, $w' = \{\psi \mid [\phi]\psi \in w\}$ is a consistent and prime theory. Clearly, by construction of $w'$, it is closed under $\vdash$ and it is a theory.  Suppose that $w'$ is inconsistent, then there is a formula $\chi_{1}$ such that $\chi_{1} \wedge \neg \chi_{1} \in w'$. By construction of $w'$, this means that $[\phi] (\chi_{1} \wedge \neg \chi_{1}) \in w$. By $(dA\wedge)$ we have $[\phi] \chi_{1} \wedge [\phi]\neg \chi_{1} \in w$. By $(uA)$ this means that $\phi \rightarrow (p \wedge \neg p) \in w$, which contradicts the assumption that $\{\phi\} \cup \{\chi \mid \chi \in w\}$ is consistent.
To show that $w'$ is prime, suppose that $\chi_{1} \vee \chi_{2} \in w'$, and thus $[\phi] (\chi_{1} \vee \chi_{2}) \in w$. From this, by ($dA\vee$) we get that either $[\phi]\chi_{1} \in w$ or $[\phi]\chi_{2} \in w$; that is, either $\chi_{1} \in w'$ or $\chi_{2} \in w'$. 

Now we show that there exists $w'' = \{\phi\} \cup \{\chi \mid \neg I \chi \wedge \chi \in w\}$ such that if $I \chi_{1} \in w'$ then $\chi_{1} \not \in w''$. Clearly, by construction of $w''$, it is closed under $\vdash$ and it is a theory. The consistency of $w''$ is assured by our assumption that $\{\phi\} \cup \{\chi \mid \chi  \in w\}$ is consistent. To show that $w''$ is prime,  let us consider $w$ which is a prime consistent theory, and consider a subset of $w$ which contains only $\chi_{1}, ..., \chi_{n}$, where $\chi_{1}, ..., \chi_{n} \in \{\chi \mid \neg I \chi \wedge \chi \in w\}$. This subset can be extended with $\phi$ (because $\{\phi\} \cup \{\chi \mid \chi \in w\}$ is consistent), and thus it is a consistent prime theory $w^{-}$, which coincides by definition with $w''$. Thus, $w''$ is also prime.


To prove that if $I \chi_{1} \in w'$ then $\chi_{1} \not \in w''$, assume that it is not the case, that is, (i) $I \chi_{1} \in w'$ and (ii) $\chi_{1} \in w''$. By construction of $w''$ (ii) means that (iii) $\phi \wedge \chi^{1} \wedge ... \wedge \chi^{n} \vdash \chi_{1}$ where all $\chi^{1}, ..., \chi^{n}$ are such that $\chi^{1} \wedge \neg I \chi^{1} \in w$, ...,$\chi^{n} \wedge \neg I \chi^{n} \in w$. From (i), by construction of $w'$, we have (iv) $[\phi] I \chi_{1} \in w$. 
From (iii), by Deduction Theorem, we have (iv) $\vdash (\phi \wedge \chi^{1} \wedge ... \wedge \chi^{n}) \rightarrow \chi_{1}$.
By $(intA2^{gen})$, this means that (v) $\vdash (\chi^{1} \wedge \neg I \chi^{1} \wedge ... \wedge \chi^{n} \wedge \neg I \chi^{n}) \rightarrow [\phi] \neg I \chi_{1}$. From (v), by construction of $w''$, we have $[\phi] \neg I \chi_{1} \in w$. Thus, by construction of $w'$, $\neg I \chi_{1} \in w'$ which contradicts (i).

Now we show that if $\{\phi\} \cup \{\chi \mid \chi  \in w\}$ is inconsistent then $w$ does not have any $\phi$-successor. Let $\{\phi\} \cup \{\chi \mid \chi \in w\}$ be inconsistent. Assume that  $R^{\phi C}ww'$ for an arbitrary $w'$. By definition of $R^{\phi C}ww'$ we have that for all $\psi$, if $[\phi] \psi \in w$ then ($\psi \in w'$ and there exists $w''$ such that $R^{C}w'w''$ and $w'' = \{\phi\} \cup \{\chi \mid \neg I \chi \wedge \chi \in w\})$. By inconsistency of $\{\phi\} \cup \{\chi \mid \chi \in w\}$ we have that there exists $\chi_{1}, ..., \chi_{n}$, s.t. $\chi_{1} \wedge ... \wedge \chi_{n} \vdash \neg \phi$, where $\chi_{1}, ..., \chi_{n} \in w$. Thus, $\neg \phi \in w$. By $(nA2)$ we have $[\phi] p \wedge \neg p \in w$, and thus $p \wedge \neg p \in w'$, which contradicts the consistency of $w'$.

\end{proof}

\textbf{Proposition \ref{Prop18}.} If $R^{\phi C}ww'$, then (1) for all $w''$ if $R^{C}ww''$ then $R^{C}w'w''$, (2) there exists $w^{*} = \{\phi\} \cup \{\chi \mid \neg I \chi \wedge \chi \in w\}$ such that $R^{C}w'w^{*}$ and (3) for all $w''$ if $R^{C}ww''$ then $R^{C}w^{*}w''$.

\begin{proof}

Let $R^{\phi C}ww'$. The case (2) is straightforward from the definition of $R^{\phi C}$ with use of ($nI$). 

To prove the case (3) let us consider the worlds $w$, $w''$ and $w^{*} = \{\phi\} \cup \{\chi \mid \neg I \chi \wedge \chi \in w\}$, such that $R^{C}ww''$. Let (i) $I \alpha \in w^{*}$ and (ii) $\alpha \in w''$. Then, (iii) $\neg I \alpha \in w$, because of (ii) and $R^{C}ww''$. Having $R^{\phi C}$ means that $Cn(\phi) \cup \{\chi \mid \chi \in w\}$ is consistent. Thus, we can consistently extend $w$ with $\phi$.  From (i), by construction of $w^{*}$, we have $\vdash (\phi \wedge \chi) \rightarrow I \alpha$ for some $\chi$ s.t. $\chi \wedge \neg I \chi \in w$. This means that the extended with $\phi$ world $w$ should also contain $I \alpha$, which contradicts (iii). Thus, if $R^{C}ww''$ then $R^{C}w^{*}w''$.



For the case (1) assume that there exists $w''$ such that $R^{C}ww''$ and it is not the case that $R^{C}w'w''$, i.e., there exists $\alpha$, such that $I \alpha \in w'$ and $\alpha \in w''$. By Lindenbaum's lemma, each consistent theory can be extended to a maximal consistent theory. Thus, we extend the worlds $w$, $w'$, $w''$ and $w^{*}$ to maximal consistent theories which we denote $w_{m}$, $w'_{m}$, $w''_{m}$ and $w^{*}_{m}$, respectively. In accordance with the definitions of $R^{C}$ and $R^{\phi C}$, this amounts to say that $w'_{m} = \{\chi_{1} \mid [\phi] \chi_{1} \in w_{m}\}$, $w''_{m} = \{ \neg \chi_{2} \mid I \chi_{2} \in w_{m}\}$, and $w^{*}_{m} = \{\phi\} \cup \{\chi \mid \neg I \chi \wedge \chi \in w_{m}\}$. It is clear that $w''_{m}$ is not empty, otherwise $\alpha \not \in w''_{m}$ for any $\alpha$ and thus $R^{C}w'_{m}w''_{m}$. Having $\alpha \in w''_{m}$ means that $\alpha \in \{\neg \chi_{2} \mid I \chi_{2} \in w_{m}\}$, that is, (i) $I \neg \alpha \in w_{m}$. From $I \alpha \in w'_{m}$ and the construction of $w'_{m}$ we also have (ii) $[\phi] I \alpha \in w_{m}$. By establishing that $\{\phi\} \cup \{\chi \mid \neg I \chi \wedge \chi \in w_{m}\}$ is consistent (from the definition of the world $w^{*}_{m}$), and by recalling that we are considering maximized theories, we have (iii) $\phi \in w_{m}$. From (i), (ii), (iii) and ($AI$), we have $I (\alpha \vee \neg \alpha) \in w_{m}$, and thus $\neg (\alpha \vee \neg \alpha) \in w''_{m}$. By (\textit{A8}), we have $\neg \alpha \wedge \neg \neg \alpha \in w''_{m}$, which is a contradiction. Thus, having an assumption that $R^{C}w' w''$ does not hold, because the consistent theories $w$, $w'$, $w''$, and $w^{*}$ cannot be extended to maximal consistent theories, which contradicts the Lindenbaum's lemma. Thus, $ \alpha \not \in w''$, that is, $R^{C}w'w''$.


\end{proof}

\textbf{Proposition \ref{Prop19}.} If $R^{\phi C}ww'$ and $R^{C}w'w''$, then either $w'' \subseteq \{\phi\} \cup \{\chi \mid \neg I \chi \wedge \chi \in w\}$ or $w'' \subseteq w'''$ s.t. $R^{C}ww'''$.

\begin{proof}

Let (i) $R^{\phi C}ww'$ and (ii) $R^{C} w'w''$. Assume that (iii) $w'' \not \subseteq \{\phi\} \cup \{\chi \mid \neg I \chi \wedge \chi \in w\}$ and (iv) $w'' \not \subseteq w'''$ s.t. $R^{C}ww'''$. The cases (iii) and (iv) mean that it is not the case that $R^{C}ww''$, that is, (v) there exists $\alpha \in w''$, (vi) $I \alpha \in w$, and (vii) $\alpha \not \in \{\phi\} \cup \{\chi \mid \neg I \chi \wedge \chi \in w\}$. From (ii) and (v), we have (viii) $I \alpha \not \in w'$. From (viii) and (i), we have (ix) $\neg [\phi] I \alpha \in w$. Let all the valid formulas of $w$ constitute the set $\Gamma$. It is clear that $\Gamma \vdash I \alpha \wedge \neg [\phi] I \alpha$. By the rule ($CN$), we get $\Gamma' \vdash \phi \rightarrow \alpha$, where $\Gamma' = \{ \chi \mid \chi \wedge \neg I \chi \in \Gamma\}$. By deduction theorem we have $\Gamma' \cup \phi \vdash \alpha$, which contradicts (vii).

\end{proof}

\end{document}